\newcommand{\R}{\mathbb R}
\newcommand{\bA}{\mathbf A}
\newcommand{\bB}{\mathbf B}
\newcommand{\bH}{\mathbf H}
\newcommand{\bI}{\mathbf I}
\newcommand{\bg}{\mathbf g}
\newcommand{\bn}{\mathbf n}
\newcommand{\bp}{\mathbf p}
\newcommand{\bu}{\mathbf u}
\newcommand{\bw}{\mathbf w}
\newcommand{\bx}{\mathbf x}
\newcommand{\Q}{\mathcal Q}
\newcommand{\T}{\mathcal T}
\newcommand{\Div}{\mathop{\rm div}}
\newcommand{\DivG}{{\operatorname{div_\Gamma}}}
\newcommand{\nablaG}{\nabla_\Gamma}
\newcommand{\uD}{\underline{D}}
\newcommand{\rd}{\mathrm{d}}
\newcommand{\Gs}{\mathcal{S}} 
\renewcommand{\div}{\textrm{div}}
\newcommand{\la}{\left\langle}
\newcommand{\ra}{\right\rangle}
\newcommand{\Wo}{\overset{\circ}{W}}
\newtheorem{assumption}{Assumption}[section]
\newtheorem{remark}{Remark}[section]
\def\enorm#1{|\!|\!| #1 |\!|\!|}
\begin{document}
\title{Error analysis of a space-time finite element method for solving PDEs on evolving surfaces
\thanks{Partially supported by NSF through the Division of Mathematical Sciences grant 1315993.}}
\author{Maxim A. Olshanskii\thanks{Department of Mathematics, University of Houston, Houston, Texas 77204-3008 (molshan@math.uh.edu).}
\and Arnold Reusken\thanks{Institut f\"ur Geometrie und Praktische  Mathematik, RWTH-Aachen
University, D-52056 Aachen, Germany (reusken@igpm.rwth-aachen.de).}
}

\maketitle

\begin{abstract} In this paper we present an error analysis of an Eulerian finite element method for
solving parabolic partial differential equations posed on evolving hypersurfaces in $\mathbb{R}^d$, $d=2,3$.
The  method employs discontinuous piecewise linear in time -- continuous piecewise linear in space finite elements and is based on a space-time weak formulation of a surface PDE problem. Trial and test surface finite element spaces
consist of traces of standard volumetric elements on a space-time manifold resulting from the evolution
of a surface. We prove first order convergence in space and time of the method in an energy norm and
second order  convergence in a weaker norm. Furthermore, we derive regularity results for solutions
of  parabolic PDEs on an evolving surface, which we need in a duality argument used in the proof of the second order convergence estimate.
\end{abstract}

\section{Introduction}

Partial differential equations posed on evolving surfaces appear in a number of  applications.  Well-known examples are
the diffusion and transport of  surfactants along interfaces in multiphase fluids \cite{GReusken2011,Stone}, diffusion-induced grain boundary motion \cite{GrainBnd1,GrainBnd2} and lipid interactions in moving cell membranes \cite{ElliotStinner,Novaketal}.
Recently, several numerical approaches for handling such type of problems have been introduced, cf.~\cite{DEreview}. In \cite{Dziuk07,DziukElliot2013a} Dziuk and Elliott developed and analyzed a finite element method for computing transport and diffusion on a surface which is based on a \emph{Lagrangian} tracking of the surface evolution. If a surface undergoes strong deformation, topological changes, or is defined implicitly, e.g., as the zero level of a level set function, then numerical methods based on a  Lagrangian approach have certain disadvantages. Methods using an \emph{Eulerian} approach were developed in e.g. \cite{DziukElliot2010,XuZh}, based on an extension  of the surface PDE into a bulk domain that contains the surface. An error analysis of this class of Eulerian methods for PDEs on an evolving surface is not known.

In the present paper, we analyze an Eulerian finite element method for parabolic type equations posed on evolving surfaces introduced in \cite{refJoerg,ORXsinum}. This method does not use an extension of the PDE off the surface into the bulk domain.
Instead, it uses restrictions of (usual) volumetric finite element functions to the surface, as first suggested in \cite{OlshReusken08,OlshanskiiReusken08} for stationary surfaces.
The method that we study uses continuous piecewise linear in space -- discontinuous piecewise linear in time volumetric finite element spaces. This allows a natural time-marching procedure, in which the numerical approximation is computed on one time slab after another. Moreover, spatial meshes may vary per time slab.  Therefore, in our surface finite element method  one can use  adaptive mesh refinement in space and time as explained in \cite{ErikJohn} for the heat equation in Euclidean space.
Numerical experiments in \cite{refJoerg,ORXsinum} have shown the efficiency of the approach  and demonstrated  second order accuracy of the method in space and time for problems with smoothly evolving surfaces. {In \cite{GOReccomas} a numerical example with two colliding spheres is considered, which illustrates the robustness of the method with respect to topological changes.}
 We consider this method to be a natural and effective extension of the approach from \cite{OlshReusken08,OlshanskiiReusken08} for \emph{stationary} surfaces to the case of \emph{evolving} surfaces. Until now, no error analysis of this (or any other) Euclidean finite element method for PDEs on evolving surfaces is known. In this paper we present such an error analysis.

The paper is organized as follows. In section~\ref{sec2}, we formulate the PDE that we consider on an evolving hypersurface in $\Bbb{R}^d$,  recall a weak formulation  and a corresponding well-posedness result. This weak formulation uses integration over the space-time manifold in  $\mathbb{R}^{d+1}$ and is well suited for our surface finite element method. This finite element method is explained in section~\ref{sec3}.  The error analysis starts with
a discrete stability result that is derived in section~\ref{sec4}. In Section~\ref{sec5},  a continuity estimate for
the  bilinear form is proved.  An error bound in a suitable energy norm is derived in  section~\ref{sec6}. The analysis has the same structure as in the standard Cea's lemma: a Galerkin orthogonality property is combined with continuity and discrete stability properties and with an interpolation error bound.  For the latter  we need  suitable extensions of a function defined on a space-time smooth manifold. The error bound in the energy norm guarantees first order convergence if spatial and time mesh sizes are of the same order.
In section~\ref{sec7}, we derive a second order error bound in a weaker norm. For this we use a duality argument and  need a higher order regularity estimate for the solution of a parabolic problem on a smoothly evolved surface. Such a regularity estimate is proved in  section~\ref{sec8}. Concluding remarks are given in section~\ref{sectconcl}.

\section{Problem formulation}\label{sec2}

Consider a surface $\Gamma(t)$ passively advected by a smooth velocity field $\bw=\bw(x,t)$, i.e. the normal velocity of $\Gamma(t)$ is given by $\bw \cdot \bn$, with
$\bn$ the unit normal on $\Gamma(t)$. We assume that for all $t \in [0,T] $,  $\Gamma(t)$ is a smooth hypersurface that is  closed ($\partial \Gamma =\emptyset$), connected, oriented, and contained in a fixed domain $\Omega \subset \Bbb{R}^d$, $d=2,3$. In the remainder we consider $d=3$, but all results have analogs for the case $d=2$.
The conservation of a scalar quantity
$u$ with a diffusive flux on $\Gamma(t)$ leads to the surface PDE (cf. \cite{James04}):
\begin{equation}
\dot{u} + ({\Div}_\Gamma\bw)u -{ \nu_d}\Delta_{\Gamma} u=0\qquad\text{on}~~\Gamma(t), ~~t\in (0,T],
\label{transport}
\end{equation}
 with initial condition $u(x,0)=u_0(x)$ for $x \in \Gamma_0:=\Gamma(0)$. Here $\dot{u}= \frac{\partial u}{\partial t} + \bw \cdot \nabla u$ denotes the advective material derivative,
 ${\Div}_\Gamma:=\operatorname{tr}\left( (I-\bn\bn^T)\nabla\right)$ is the surface divergence and $\Delta_\Gamma$ is the
 Laplace-Beltrami operator, $\nu_d>0$ is the constant diffusion coefficient.

In the analysis of partial differential equations it is  convenient to reformulate \eqref{transport} as a problem with
homogeneous initial conditions and a non-zero right-hand side.
To this end, consider the decomposition of the solution $u=\widetilde{u}+u^0$, where $u^0(\cdot,t):\ \Gamma(t) \to \Bbb{R}$, with $t\in[0,T]$, is chosen sufficiently smooth and such that
$
u^0(x,0)=u_0(x)$ on $\Gamma_0$, and $\frac{d}{dt}\int_{\Gamma(t)} u^0 \, ds =0.
$
Since the solution of \eqref{transport} has the  mass conservation property $\frac{d}{dt}\int_{\Gamma(t)} u\,ds=0$,
 the new unknown function $\widetilde{u}$ satisfies $\tilde u(\cdot,0)=0$ on $\Gamma_0$ and has
the zero mean property:
\begin{equation}\label{average}
\int_{\Gamma(t)} \widetilde{u}\,ds=0\quad\text{for all}~t\in [0,T].
\end{equation}
For this transformed function the surface diffusion equation takes the form
\begin{equation}
\begin{aligned}
\dot{\widetilde{u}} + ({\Div}_\Gamma\bw)\widetilde{u} -{\nu_d}\Delta_{\Gamma} \widetilde{u}&=f\qquad\text{on}~~\Gamma(t), ~~t\in (0,T],\\
\widetilde{u}(\cdot,0)&=0\qquad\text{on}~~\Gamma_0.
\end{aligned}
\label{transport1}
\end{equation}
The right-hand side is now non-zero: $f:=-\dot{u^0} - ({\Div}_\Gamma\bw)u^0 + {\nu_d}\Delta_{\Gamma}u^0$.
Using the Leibniz formula
\begin{equation}\label{Leibniz}
  \int_{\Gamma(t)} \dot v + v \DivG \bw \, ds = \frac{d}{dt}  \int_{\Gamma(t)} v \, ds, 
\end{equation}
and the integration by parts over $\Gamma(t)$, one immediately finds $\int_{\Gamma(t)} f\,ds=0$ for  all $t\in [0,T]$. In the remainder we consider the transformed problem \eqref{transport1} and write $u$ instead of $\tilde u$. 
In the stability analysis in section~\ref{sec4} we will use the zero mean property of $f$ and the corresponding zero mean property \eqref{average} of $u$.

\subsection{Weak formulation}
In this paper we present an error analysis of a finite element method for \eqref{transport1} and hence we need a suitable
weak formulation of this equation.  While several weak formulations of \eqref{transport1} are known in the literature, see \cite{Dziuk07,GReusken2011}, the most appropriate for our purposes is the
integral space-time formulation of \eqref{transport1} proposed in \cite{ORXsinum}. In this section we recall this formulation.
Consider  the space-time manifold
 \[
 \Gs= \bigcup\limits_{t \in (0,T)} \Gamma(t) \times \{t\},\quad  \Gs\subset \Bbb{R}^{4}.
 \]
Due to the identity
\begin{equation}\label{transform}
 \int_0^T \int_{\Gamma(t)} f(s,t) \, ds \, dt = \int_{\Gs} f(s) (1+ (\bw \cdot \bn)^2)^{-\frac12}\, ds,
\end{equation}
the scalar product $(v,w)_0=\int_0^T \int_{\Gamma(t)} v w \, ds \, dt$
induces a norm   that is equivalent to the standard norm on $L^2(\Gs)$. For our purposes,
it is more convenient to consider the $(\cdot,\cdot)_0$ inner product on  $L^2(\Gs)$.
Let $\nablaG$ denote the tangential gradient for $\Gamma(t)$ and introduce  the Hilbert space
\begin{equation}
H=\{\, v \in L^2(\Gs)~|~ \|\nablaG v\|_{L^2(\Gs)} <\infty \, \}, \quad (u,v)_H=(u,v)_0+ (\nablaG u, \nablaG v)_0. \label{inner}
  \end{equation}
We consider the material derivative $\dot{u}$ of $u \in H$ as a distribution on $\Gs$.
In \cite{ORXsinum} it is shown that $C_0^1(\Gs)$ is  dense in $H$. If $\dot{u}$ can be extended to a bounded linear functional on $H$, we write $\dot u \in H'$ { and $\langle \dot u, v \rangle = \dot u(v)$ for $v \in H$.} Define the space
\[
  W= \{ \, u\in H~|~\dot u \in H' \,\}, \quad \text{with}~~\|u\|_W^2 := \|u\|_H^2 +\|\dot u\|_{H'}^2.
\]
In \cite{ORXsinum}   properties of $H$ and $W$ are analyzed.
Both spaces are Hilbert spaces and smooth functions are  dense in $H$ and $W$. We shall recall  other useful results for elements of $H$ and $W$ at those places in this paper, where we need them.

Define
\[
\Wo:=\{\, v \in W~|~v(\cdot, 0)=0 \quad \text{on}~\Gamma_0\,\}.
\]
The space $\Wo$ is well-defined,
since functions from $W$ have well-defined traces in $L^2(\Gamma(t))$ for any $t\in[0,T]$. 
We introduce the symmetric bilinear form
\[
  a(u,v)= \nu_d (\nablaG u, \nablaG v)_0 + (\DivG \bw\, u,v)_0, \quad u, v \in H,
\]
which is  continuous on $H\times H$:
\begin{equation*}\label{eq:continuity}
a(u,v)\le(\nu_d+\alpha_{\infty}) \|u\|_H\|v\|_H,\quad\text{with}~\alpha_{\infty}:=\|\DivG \bw\|_{L^\infty(\Gs)}.
\end{equation*}
The weak space-time formulation of \eqref{transport1} reads: Find $u \in \Wo$ such that
\begin{equation} \label{weakformu}
 \la\dot u,v\ra +a (u,v) = (f,v)_0 \quad \text{for all}~~v \in H.
\end{equation}

\subsection{Well-posedness result and stability estimate}
Well-posedness of \eqref{weakformu} follows from the following lemma from \cite{ORXsinum}.
\begin{lemma}\label{la:infsup}
The following properties of the bilinear form $\la\dot u,v\ra + a(u,v)$ hold.
\begin{description}
\item{a)} Continuity:
\[
  | \la\dot u,v\ra + a(u,v)| \le (1+\nu_d+\alpha_{\infty})\|u\|_W \|v\|_H \quad \text{for all}~~u \in W,~v \in H.
\]
\item{b)} Inf-sup stability:
 \begin{equation}\label{infsup}
  \inf_{0\neq u \in \Wo}~\sup_{ 0\neq v \in \overset{\phantom{.}}{H}} \frac{\la\dot u,v\ra + a(u,v)}{\|u\|_W\|v\|_H} \geq c_s>0.
 \end{equation}
\item{c)} The kernel of the adjoint mapping is trivial:
\[ \big[~\la\dot u,v\ra + a(u,v)=0~~\text{for some}~v\in H~~\text{and all}~u \in \Wo~\big]\quad \Longrightarrow \quad v=0.
\]
\end{description}
\end{lemma}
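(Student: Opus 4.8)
The plan is to handle the three parts in increasing order of difficulty, with part (b) carrying essentially all of the work and part (c) reduced to it by a \emph{time-reversal}.

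Part (a) I would dispatch immediately. By definition of the dual norm $\la\dot u,v\ra\le\|\dot u\|_{H'}\|v\|_H$, while the continuity estimate for $a$ already recorded above gives $a(u,v)\le(\nu_d+\alpha_\infty)\|u\|_H\|v\|_H$. Factoring out $\|v\|_H$ and using $\|\dot u\|_{H'}\le\|u\|_W$ and $\|u\|_H\le\|u\|_W$ produces the constant $1+\nu_d+\alpha_\infty$.

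For part (b), the heart of the matter, the engine is the Leibniz formula \eqref{Leibniz} applied to $u^2$, which (after justification by density of smooth functions in $W$ and the existence of $L^2(\Gamma(t))$-traces) yields the energy identity
\[
2\la\dot u,u\ra + (\DivG\bw\,u,u)_0 = \|u(\cdot,T)\|_{L^2(\Gamma(T))}^2,\qquad u\in\Wo,
\]
using $u(\cdot,0)=0$. Testing naively with $v=u$ controls $\nu_d\|\nablaG u\|_0^2$ and the terminal trace but leaves the \emph{indefinite} term $\tfrac12(\DivG\bw\,u,u)_0$, which cannot be absorbed; this is the main obstacle. I would therefore use the time-weighted test function $v=e^{-\lambda t}u\in H$ with $\lambda>\alpha_\infty$. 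Repeating the computation with the weight and integrating the time-derivative term by parts in $t$ (the weight generating the favourable term $\tfrac{\lambda}{2}\int_0^T e^{-\lambda t}\|u\|_{L^2(\Gamma(t))}^2\,dt$) gives
\[
\la\dot u,v\ra + a(u,v) \ge \tfrac{\lambda-\alpha_\infty}{2}\int_0^T\! e^{-\lambda t}\|u\|_{L^2(\Gamma(t))}^2\,dt + \nu_d\int_0^T\! e^{-\lambda t}\|\nablaG u\|_{L^2(\Gamma(t))}^2\,dt \ge c_1 e^{-\lambda T}\|u\|_H^2,
\]
with $c_1>0$, while $\|v\|_H\le\|u\|_H$ since $e^{-\lambda t}\le1$. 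Hence the inf-sup numerator $N(u):=\sup_{0\ne v\in H}(\la\dot u,v\ra+a(u,v))/\|v\|_H$ satisfies $N(u)\ge c_1 e^{-\lambda T}\|u\|_H$, controlling the $H$-part of the norm. This exponential weight is a \emph{Gronwall device} tailored to the indefinite zeroth-order term.

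To close part (b) I still must control $\|\dot u\|_{H'}$. Writing $\la\dot u,v\ra=(\la\dot u,v\ra+a(u,v))-a(u,v)$ and using continuity of $a$ gives $\|\dot u\|_{H'}\le N(u)+(\nu_d+\alpha_\infty)\|u\|_H$; combined with the previous bound this yields $\|u\|_W\le C\,N(u)$ with $C$ depending on $\nu_d,\alpha_\infty,T$, which is exactly \eqref{infsup} with $c_s=1/C$. For part (c) I would recognize the hypothesis as the weak form of the backward adjoint problem: integrating the material derivative by parts via \eqref{Leibniz} applied to $uv$, the $\DivG\bw$ contributions cancel and the condition becomes, for all $u\in\Wo$,
\[
\int_{\Gamma(T)}u(\cdot,T)\,v(\cdot,T)\,ds - (\dot v,u)_0 + \nu_d(\nablaG v,\nablaG u)_0 = 0,
\]
which shows $v\in W$ and, choosing $u$ with arbitrary terminal trace, forces $v(\cdot,T)=0$. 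Passing to the time-reversed variables $\tau=T-t$ with velocity field $-\bw$, the function $v$ becomes an element of the corresponding $\Wo$ (zero \emph{initial} data) solving the forward problem with zero right-hand side; testing with its own exponential weight and invoking the coercivity estimate from part (b) gives $\|v\|_H=0$, i.e. $v=0$. Thus once (b) is in place, (c) follows essentially for free.
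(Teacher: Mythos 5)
Your proof is correct and follows essentially the same route as the paper's own: the lemma is not proved in this paper but cited from \cite{ORXsinum}, and that proof --- as the authors themselves indicate in Remark~\ref{remMotiv} --- rests on exactly your exponentially weighted test function $e^{-\gamma t}u$ to absorb the indefinite $(\DivG \bw\, u,u)_0$ term, while the adjoint-kernel statement (c) is likewise handled by bootstrapping $\dot v \in H'$ from the hypothesis via density of $C_0^1(\Gs)$ in $H$, forcing $v(\cdot,T)=0$, and running a backward (time-reversed) energy argument. The only cosmetic deviation is in how $\|\dot u\|_{H'}$ is captured: the cited proof builds it into a single test function $v=\mu e^{-\gamma t}u+z$ (with $z$ the $H$-Riesz representative of $\dot u$), whereas you recover it a posteriori from your bound $\|\dot u\|_{H'}\le N(u)+(\nu_d+\alpha_{\infty})\|u\|_H$ --- an equivalent step, and both variants produce the same kind of $T$-dependent constant recorded in Remark~\ref{remstab}.
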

\medskip
As a  consequence of  Lemma~\ref{la:infsup} one obtains:
\begin{theorem} \label{mainthm1}
For any $f\in L^2(\Gs)$, the problem \eqref{weakformu} has a unique solution $u\in \Wo$. This solution satisfies the a-priori estimate
\begin{equation} \label{stabestimate}
\|u\|_W \le c_s^{-1} \|f\|_0.
\end{equation}
\end{theorem}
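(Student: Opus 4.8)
The plan is to read Theorem~\ref{mainthm1} as a direct instance of the Banach--Ne\v{c}as--Babu\v{s}ka (generalized Lax--Milgram) theorem applied on the pair of spaces $\Wo \times H$. Set $b(u,v) := \la\dot u,v\ra + a(u,v)$ and $F(v) := (f,v)_0$, so that \eqref{weakformu} becomes: find $u \in \Wo$ with $b(u,v) = F(v)$ for all $v \in H$. Since $\Wo$ is a closed subspace of the Hilbert space $W$ and $H$ is a Hilbert space, both are reflexive Banach spaces, which is exactly the abstract setting in which the three properties of Lemma~\ref{la:infsup} become the hypotheses needed for well-posedness.

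First I would check that the right-hand side defines a bounded functional on $H$: by the Cauchy--Schwarz inequality for $(\cdot,\cdot)_0$ and the definition of $\|\cdot\|_H$, we have $|F(v)| = |(f,v)_0| \le \|f\|_0\|v\|_0 \le \|f\|_0\|v\|_H$, so $F \in H'$ with $\|F\|_{H'} \le \|f\|_0$. Continuity (part a) of the Lemma) guarantees that the operator $B \colon \Wo \to H'$ defined by $\la Bu, v\ra = b(u,v)$ is bounded and linear. The inf-sup estimate \eqref{infsup} (part b)) then forces $B$ to be injective with closed range, while the triviality of the adjoint kernel (part c)) shows that the annihilator of the range of $B$ is trivial, i.e. the range is dense in $H'$. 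Combining closed range with dense range yields surjectivity, so $B$ is a bijection from $\Wo$ onto $H'$; in particular $Bu = F$ has a unique solution $u \in \Wo$, giving both existence and uniqueness.

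The a-priori bound \eqref{stabestimate} then comes essentially for free: inserting the solution $u$ into the inf-sup inequality \eqref{infsup} and using $b(u,v) = F(v)$ gives
\[
c_s \|u\|_W \le \sup_{0 \ne v \in H} \frac{b(u,v)}{\|v\|_H} = \sup_{0 \ne v \in H} \frac{(f,v)_0}{\|v\|_H} \le \|f\|_0,
\]
which is precisely \eqref{stabestimate}.

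The main (and essentially only) technical point is the functional-analytic bijectivity step: turning the quantitative inf-sup bound into injectivity and closed range, and turning the adjoint-kernel condition into density of the range, so that surjectivity follows. This is the content of the closed-range theorem in the reflexive setting. I do not expect any genuine obstacle here, because all the substantive work --- the well-posedness of the trace spaces $H$ and $W$, and in particular the verification of the inf-sup constant $c_s$ --- sits inside the proof of Lemma~\ref{la:infsup}, which we are entitled to assume.
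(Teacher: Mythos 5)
Your proposal is correct and follows essentially the same route as the paper: the paper presents Theorem~\ref{mainthm1} precisely as a direct consequence of the three properties in Lemma~\ref{la:infsup} via the standard Banach--Ne\v{c}as--Babu\v{s}ka well-posedness argument (continuity, inf-sup giving injectivity and closed range, trivial adjoint kernel giving dense range), with the bound \eqref{stabestimate} obtained by inserting the solution into \eqref{infsup} exactly as you do.
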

Related to these stability results for the continuous problem we make some remarks that are relevant for the stability analysis of the discrete problem in Section~\ref{sec4}.
\begin{remark} \label{remstab} \rm
We remark that Lemma~\ref{la:infsup} and Theorem~\ref{mainthm1} have been proved for a slightly more general surface PDE than the surface diffusion problem \eqref{transport1}, namely
\begin{equation}
\begin{aligned}
\dot{u} + \alpha\, {u} -{\nu_d}\Delta_{\Gamma} {u}&=f\qquad\text{on}~~\Gamma(t), ~~t\in (0,T],\\
{u}&=0\qquad\text{on}~~\Gamma_0,
\end{aligned}
\label{transport2}
\end{equation}
with $\alpha\in L^\infty(\Gs)$ and a generic right-hand side $f\in H'$, not necessarily satisfying the zero integral condition.
The stability constant $c_s$ in the inf-sup condition \eqref{infsup} can be taken as
\[
c_s= \frac{\nu_d}{\sqrt{2}}(1+\nu_d+\alpha_{\infty})^{-2} e^{-2T(\nu_d+\tilde c)},\quad \tilde c =\|\alpha-\frac12\DivG \bw\|_{L^\infty(\Gs)}, ~\text{with}~\alpha_{\infty}:=\|\alpha\|_{L^\infty(\Gs)}.
\]
This stability constant deteriorates if $\nu_d \downarrow 0$ or $T \to \infty$.
\end{remark}
\medskip

\begin{remark} \label{remGronwall}
 \rm A stability result similar to \eqref{stabestimate}, in a somewhat weaker norm (without the $\|\dot u\|_{H'}$ term), can be derived using Gronwall's lemma, cf.~\cite{Dziuk07}. In \eqref{weakformu} we then take $v=u_{|[0,t]}$, with $t \in (0,T]$, and using the Leibniz formula we get
\[
  \frac12 \int_{\Gamma(t)} u^2 \, ds +  \nu_d \int_0^t \int_{\Gamma(\tau)} (\nabla_\Gamma u)^2 \, ds d\tau =  \int_0^t \int_{\Gamma(\tau)} f u \, ds d\tau - \frac12 \int_0^t \int_{\Gamma(\tau)} \DivG \bw \, u^2 \, ds d\tau.
\]
Using standard estimates we obtain for $h(t):=\frac12 \int_{\Gamma(t)} u^2 \, ds + \nu_d \int_0^t \int_{\Gamma(\tau)} (\nabla_\Gamma u)^2 \, ds d\tau$:
\begin{equation} \label{Gronwbound}
 h(t) \leq \frac12 \|f\|_0^2 +(1+ \| \DivG \bw\|_{L^\infty(\Gs)}) \int_0^t h(\tau)\, d\tau \quad \text{for all}~~t \in [0,T],
\end{equation}
and using Gronwall's lemma this yields a stability estimate.
\end{remark}
\medskip

\begin{remark} \label{remPD}
\rm  In general, for the problem \eqref{weakformu} a deterioration of the stability constant for $T \to \infty$, cf. Remark~\ref{remstab}, can not be avoided. This is seen from the simple example of a contracting sphere with a uniform initial concentration $u_0$. The solution then is of the form $u(x,t)=u_0 e^{\lambda t}$, with $\lambda >0$ depending on the rate of contraction. This possible exponential growth is closely related to the fact that if we represent \eqref{weakformu} as
\[
   \dot u + Au =f, \quad A: H \to H' \quad \text{given by} ~~\la A u,v \ra = (\DivG \bw u, v)_0 + \nu_d (\nablaG u, \nablaG v)_0,
\]
the symmetric operator $A$ is not necessarily positive semi-definite. The possible lack of positive semi-definitness is caused by $\DivG \bw$, which can be interpreted as local area change: From the Leibniz formula we obain $\int_{\gamma(t)}\DivG \bw (s,t) \, \,d s = \frac{d}{dt} \int_{\gamma(t)} 1 \, \,d s = \frac{d}{dt} |\gamma(t)|$, with $\gamma(t)$ a (small) connected subset of the surface $\Gamma(t)$. If the surface is not compressed anywhere (i.e., the local area is constant or increasing) then  $ \DivG \bw \geq 0$ holds and $A$ is positive semi-definite. In general, however, one has expansion and compression in different parts of the surface. Note that if $ \DivG \bw =0$, i.e., no local area change, we can still have an arbitrary strong convection of $\Gamma(t)$. For example, a constant velocity field $\bw(x,t)=\bw_0 \in \Bbb{R}^3$, with $\|\bw_0\| \gg 1$. In the stability analysis of the discrete problem in Section~\ref{sec4} we restrict to the case that $A$ is \emph{positive
  definite}, cf. the comments in Remark~\ref{remMotiv}. Clearly, the problem then has a nicer mathematical structure. In particular the solution does not have exponentially growing components. The restriction to positive definite $A$ still allows interesting cases with small local area changes (of arbitrary sign) and (very) strong convection of $\Gamma(t)$.
Even for very simple convection fields, e.g. $\bw$ constant, $A$ can not be postive definite on the space $\Wo$, the trial space used in \eqref{weakformu}. This is due to the fact that for $u(x,t)=u(t)$, i.e. $u$ is constant in $x$, we have $\nablaG u=0$. We  deal with this problem by restricting to a suitable \emph{subspace}, as explained below.
\end{remark}
\medskip

We outline a stability result from \cite{ORXsinum} for the case if $A$ is positive definite on a subspace. Functions $u \in H$  obey the Friedrichs inequality
\begin{equation}\label{Fr}
\int_{\Gamma(t)} |\nabla_\Gamma u|^2 \, ds \geq c_F(t) \int_{\Gamma(t)} ( u- \frac{1}{|\Gamma(t)|} \bar u)^2 \, ds\quad\text{for all}~t\in[0,T],
\end{equation}
with $c_F(t) >0$ and $\bar u(t):= \int_{\Gamma(t)} u(s,t)\, ds$.
 A smooth solution to problem \eqref{transport1} satisfies the zero average condition \eqref{average} and so we may look for a weak solution from the following
subspace of $\Wo$:
\begin{equation} \label{Wt}
\widetilde{W}:= \{\, u \in \Wo~|~ \bar u(t) =0 \quad \text{for all}~~t \in [0,T]\,\}.
\end{equation}
Obviously, elements of $\widetilde{W}$  satisfy the Friedrichs inequality with $\bar u=0$. Exploiting this,
one obtains the following result.

\begin{proposition}\label{Prop2}
Assume  $f$ satisfies  $\int_{\Gamma(t)} f\,ds=0$ for  almost all $t\in [0,T]$. Then the solution $u \in \Wo$ of \eqref{weakformu} belongs to $\widetilde{W}$.
Additionally assume that there exists a $c_0 >0$ such that
\begin{equation} \label{ass7}
   \DivG \bw (x,t) +\nu_d c_F(t) \geq c_0 \quad \text{for all}~~x \in \Gamma(t),~t \in [0,T]
\end{equation}
holds.  Then the inf-sup property \eqref{infsup} holds, with $\Wo$ replaced by the subspace $\widetilde{W}$ and $c_s= \frac{\min \{\nu_d, c_0\}}{2\sqrt{2} (1+\nu_d + \alpha_\infty)^2} $, where $\alpha_{\infty}:=\|\DivG \bw\|_{L^\infty(\Gs)}$.
\end{proposition}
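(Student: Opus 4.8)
I need to establish two claims in Proposition~\ref{Prop2}. First, that the solution $u$ of \eqref{weakformu} automatically lies in $\widetilde{W}$; second, that under the coercivity hypothesis \eqref{ass7} the inf-sup constant can be explicitly quantified on $\widetilde{W}$.

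Let me sketch how I'd prove each part.

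**Part 1: $u \in \widetilde{W}$.** I want to show $\bar u(t) = 0$ for all $t$. The natural tool is the Leibniz formula \eqref{Leibniz}. My plan: test \eqref{weakformu} with a cleverly chosen $v \in H$ that isolates the mean. The issue is that $\bar u(t)$ is a function of time, so I can't test against a single constant. Instead I'd consider $v = \phi(t)$ for arbitrary smooth $\phi$ (constant in the space variable on each $\Gamma(t)$). Then $\nablaG v = 0$, so $a(u,v) = (\DivG\bw\, u, v)_0 = \int_0^T \phi(t)\int_{\Gamma(t)} \DivG\bw\, u\, ds\, dt$, and $\langle\dot u, v\rangle = \int_0^T\phi(t)\int_{\Gamma(t)}\dot u\, ds\, dt$. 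Adding these and applying Leibniz \eqref{Leibniz} with $\dot u + u\DivG\bw$ collapses the spatial integral to $\frac{d}{dt}\bar u(t)$, so the left side becomes $\int_0^T \phi(t)\,\frac{d}{dt}\bar u(t)\, dt$. The right side is $\int_0^T\phi(t)\int_{\Gamma(t)} f\, ds\, dt = 0$ by the zero-integral hypothesis on $f$. Since $\phi$ is arbitrary, $\frac{d}{dt}\bar u(t) = 0$, and the homogeneous initial condition $u(\cdot,0)=0$ (so $\bar u(0)=0$) gives $\bar u \equiv 0$. I'd be careful that this constant-in-space test function is admissible in $H$ and that the distributional pairing $\langle\dot u,\cdot\rangle$ interacts with Leibniz legitimately; a density argument via $C_0^1(\Gs)$ handles the technicalities.

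**Part 2: the inf-sup estimate on $\widetilde{W}$.** The strategy mirrors the proof of Lemma~\ref{la:infsup}(b), but now I can exploit that the Friedrichs inequality \eqref{Fr} holds with $\bar u = 0$ on $\widetilde{W}$, which is precisely what turns $A$ into a coercive operator under \eqref{ass7}. The standard device for inf-sup bounds of parabolic type is to choose a test function of the form $v = u + \beta\, \psi$, where $\psi$ is built from $\dot u$ (to recover control of the $\|\dot u\|_{H'}$ term) and $\beta>0$ is tuned at the end. First I'd compute $\langle\dot u, u\rangle + a(u,u)$: the material-derivative term yields, by Leibniz, a boundary-in-time contribution $\frac12\int_{\Gamma(T)} u^2\, ds \geq 0$ plus $-\frac12(\DivG\bw\, u, u)_0$, which combines with $a(u,u) = \nu_d\|\nablaG u\|_0^2 + (\DivG\bw\, u, u)_0$ to give $\nu_d\|\nablaG u\|_0^2 + \frac12(\DivG\bw\, u, u)_0$. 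Applying \eqref{Fr} with $\bar u=0$ and assumption \eqref{ass7}, this is bounded below by $\frac12 c_0\|u\|_0^2 + \frac12\nu_d\|\nablaG u\|_0^2 \gtrsim \min\{\nu_d, c_0\}\|u\|_H^2$. This controls the $\|u\|_H$ part. To recover $\|\dot u\|_{H'}$, I'd test separately with the Riesz representative of $\dot u$ in $H$ and combine the two estimates with the continuity bound $a(u,v)\le(\nu_d+\alpha_\infty)\|u\|_H\|v\|_H$, then optimize $\beta$.

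**Main obstacle.** The delicate point is bookkeeping the constants to land exactly on $c_s = \frac{\min\{\nu_d,c_0\}}{2\sqrt2\,(1+\nu_d+\alpha_\infty)^2}$. The coercivity estimate gives a clean lower bound, but combining it with the $\dot u$-recovery step involves a Cauchy-Schwarz/Young splitting whose free parameter must be chosen to balance the $\nu_d+\alpha_\infty$ continuity factor against $\min\{\nu_d,c_0\}$; getting the precise powers (the square and the factor $2\sqrt2$) requires tracking each inequality rather than absorbing constants. I expect the structural argument to be short, with the real work being this quantitative optimization, which I would organize as a final scalar minimization over $\beta$.
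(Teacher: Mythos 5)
Your proposal is correct and follows what is essentially the intended argument: the paper itself gives no proof of Proposition~\ref{Prop2} (it is recalled from \cite{ORXsinum}), but the surrounding text and remarks confirm your route in both parts --- the zero-mean property of $u$ via constant-in-space test functions and the Leibniz formula \eqref{Leibniz}, and the inf-sup bound via the Friedrichs inequality \eqref{Fr} with $\bar u=0$ on $\widetilde{W}$ combined with \eqref{ass7} to get coercivity $\langle\dot u,u\rangle + a(u,u)\ge \tfrac12\min\{\nu_d,c_0\}\|u\|_H^2$, plus a test function of the form $v=u+\beta z$ with $z$ built from $\dot u$ (cf.\ Remark~\ref{remMotiv}, where the general-case test function $v=\mu e^{-\gamma t}u+z$ is described; in the positive definite case the exponential weight is unnecessary, reducing to exactly your choice). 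As a check on the ``scalar minimization'' you defer: taking $z$ to be the Riesz representative of $\dot u$ in $H$ normalized so that $\|z\|_H=\|\dot u\|_{H'}$ and choosing $\beta=\min\{\nu_d,c_0\}/(1+\nu_d+\alpha_\infty)^2$ with an equal-weight Young inequality yields the lower bound $\tfrac{\min\{\nu_d,c_0\}}{2(1+\nu_d+\alpha_\infty)^2}\|u\|_W^2$ together with $\|v\|_H\le\sqrt{2}\,\|u\|_W$ (since $\beta\le 1/4$), which lands precisely on the stated constant $c_s$.
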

\medskip

If the condition in  \eqref{ass7} is satisfied then $A$ is positive definite on the  subspace $\widetilde{W}$. Due to the positive-definitness  the stability constant $c_s$ is  independent of $T$.

\section{Finite element method} \label{sectderivation}\label{sec3}

Consider a  partitioning of the time interval:  $0=t_0 <t_1 < \ldots < t_N=T$, with a uniform time step $\Delta t = T/N$. The assumption of a uniform time step is made to simplify the presentation, but is not essential. A time interval is denoted by $I_n:=(t_{n-1},t_n]$.  The symbol $\Gs^n$ denotes the space-time interface corresponding to $I_n$, i.e.,  $\Gs^n:=\cup_{t \in I_n}\Gamma(t)\times\{t\}$, and $\Gs:= \cup_{1 \leq n \leq N}  \Gs^n $. We introduce the following subspaces $H_n:=\{\, v \in H~|~v=0  \quad \text{on}~~\Gs \setminus \Gs^n\, \}$ of $H$,
and define the spaces
\[
  W_n= \{ \, v\in H_n~|~\dot v \in H_n' \,\}, \quad \|v\|_{W_n}^2 = \|v\|_{H}^2 +\|\dot v\|_{H_n'}^2.
\]
An element $(v_1, \ldots, v_N) \in \oplus_{n=1}^N W_n$ is identified with $v \in H$, by $v_{|\Gs^n}= v_n$.
Our finite element method is conforming with respect to the broken  trial space
\[
W^b:= \oplus_{n=1}^N W_n,~~\text{with norm}~~ \|v\|_{W^b}^2= \sum_{n=1}^N \|v_n\|_{W_n}^2= \|v\|_H^2+ \sum_{n=1}^N \|\dot v_n\|_{H_n'}^2 .
\]
For $u \in W_n$, the one-sided  limits
$u_+^{n}=u_+(\cdot,t_{n})$ and ${u}_{-}^n=u_{-}(\cdot,t_n)$ are well-defined in $L^2(\Gamma(t_n))$ (cf. \cite{ORXsinum}).  At $t_0$ and $t_N$  only $u_+^{0}$ and $u_{-}^{N}$ are defined.
For $v \in W^b$, a jump operator  is defined by $[v]^n= v_+^n-v_{-}^n \in L^2(\Gamma(t_n))$, $n=1,\dots,N-1$. For $n=0$, we define $[v]^0=v_+^0$.

On the cross sections $\Gamma(t_n)$, $0 \leq n \leq N$,  of $\Gs$ the $L^2$ scalar product is denoted by
$
 (\psi,\phi)_{t_n}:= \int_{\Gamma(t_n)} \psi \phi \, ds .
$
In addition to $a(\cdot,\cdot)$,  we define on the broken space $W^b$ the following bilinear forms:
\begin{align*}
 d(u,v)  =  \sum_{n=1}^N d^n(u,v), \quad d^n(u,v)=([u]^{n-1},v_+^{n-1})_{t_{n-1}},\quad
 \la \dot u ,v\ra_b =\sum_{n=1}^N  \la \dot u_n, v_n\ra.
\end{align*}

It is easy to check, see \cite{ORXsinum}, that the solution to \eqref{weakformu} also solves the following variational problem in the broken space: Find $u \in W^b$ such that
\begin{equation} \label{brokenweakformu}
  \la \dot u ,v\ra_b +a(u,v)+d(u,v) =( f,v )_0 \quad \text{for all}~~v \in W^b.
\end{equation}
This variational formulation uses $W^b$ as test space, since the term $d(u,v)$ is not well-defined for an arbitrary $v\in H$.
Also note that the initial condition $u(\cdot,0)=0$ is not an essential condition in the space $W^b$, but is treated in a weak sense (as is standard in DG methods for time dependent problems).
 From an algorithmic point of view,  this formulation has the advantage that due to the use of the broken space $W^b= \oplus_{n=1}^N W_n$ it can be solved in a time stepping manner. The discretization that we introduce below is a Galerkin method for the weak formulation \eqref{brokenweakformu}, with  a finite element space $W_{h}\subset W^b$.

To define this $W_{h}$, consider  the partitioning of the space-time volume domain $Q= \Omega \times (0,T] \subset \Bbb{R}^{3+1}$ into time slabs  $Q_n:= \Omega \times I_n$. Corresponding to each time interval $I_n:=(t_{n-1},t_n]$ we assume a given shape regular tetrahedral triangulation $\T_n$ of the spatial domain $\Omega$. The corresponding spatial mesh size parameter is denoted by $h$. 
Then $\mathcal{Q}_h=\bigcup\limits_{n=1,\dots,N}\T_n\times I_n$ is a subdivision of $Q$ into space-time
prismatic nonintersecting elements. We shall call $\mathcal{Q}_h$ a space-time triangulation of $Q$.
Note that this triangulation is not necessarily fitted to the surface $\Gs$. We allow $\T_n$ to vary with $n$ (in practice, during time integration one may wish to adapt the space triangulation depending on the changing local geometric properties of the surface) and so
the elements of $\mathcal{Q}_h$ may not  match at $t=t_n$.

The \emph{local} space-time triangulation $\mathcal{Q}_h^\Gs$ consists of space-time prisms that are intersected by $\Gs$, i.e., $\mathcal{Q}_h^\Gs = \{\,  T \times I_n \in \mathcal{Q}_h~|~{\rm meas}_3( (T \times I_n) \cap \Gs >0\,\}$, cf. Fig.~\ref{Fig1}. If $(T \times I_n) \cap \Gs$ consists of a face $F$ of the prism $T \times I_n$, we include in $\mathcal{Q}_h^\Gs$ only one of the two prisms that have this $F$ as their intersection. The (local) domain formed by all prisms in  $\mathcal{Q}_h^\Gs$ is denoted by $Q^\Gs$.

For any $n\in\{1,\dots,N\}$, let $V_n$ be the finite element space of continuous piecewise affine functions on $\T_n$.
We define the (local) \emph{volume space-time finite element space}:
\[
V_{h}:= \{ \, v: Q^\Gs  \to \Bbb{R} ~|~  v(x,t)= \phi_0(x) + t \phi_1(x)~\text{on every}~Q_n\cap Q^\Gs,~\text{with}~\phi_0,\, \phi_1 \in V_n\,\}.
\]
Thus, $V_{h}$ is a space of piecewise bilinear functions with respect to  $\mathcal{Q}_h^\Gs$, continuous in space and discontinuous in time. Now we define our \emph{surface finite element space} as the space
of traces of functions from $V_{h}$ on $\Gs$:
\begin{equation} \label{deftraceFE}
  W_{h} := \{ \, w:\Gs \to \Bbb{R}~|~ w=v_{|\Gs}, ~~v \in V_{h} \, \}.
\end{equation}

\begin{figure}[ht!]
 \centering
    \includegraphics[width=0.8\textwidth]{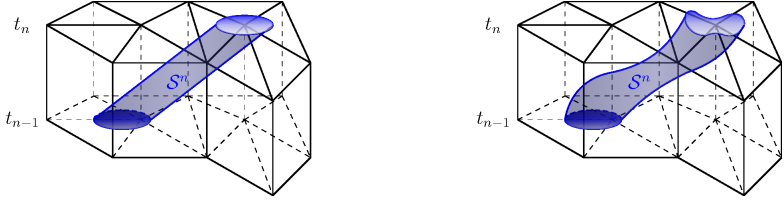}
\caption{Illustration of the local space-time triangulation $\mathcal{Q}_h^\Gs$ in one time slab. In the left picture we have a constant $\bw$, hence \eqref{ass7} is satisfied.}
 \label{Fig1}
\end{figure}

The finite element method reads: Find $u_h \in W_{h}$ such that
\begin{equation} \label{brokenweakformu_h}
  \la \dot u_h ,v_h\ra_b +a(u_h,v_h)+d(u_h,v_h) = (f,v_h)_0 \quad \text{for all}~~v_h \in W_{h}.
\end{equation}
As usual in time-DG methods,  the initial condition for $u_h(\cdot,0)$ is treated in a weak sense.
Due to $u_h\in H^1(Q_n)$ for $n=1,\dots,N$, the first term  in \eqref{brokenweakformu_h}
can be written as
\begin{equation} \label{eval}
\la \dot u_h ,v_h\ra_b=\sum_{n=1}^N\int_{t_{n-1}}^{t_n}\int_{\Gamma(t)} (\frac{\partial u_h}{\partial t} +\bw\cdot\nabla u_h)v_h ds\,dt.
\end{equation}
In the (very unlikely) case that $\Gamma(t)$ is a face of two tetrahedra $T_1$,$T_2$ and both $T_1 \times I_n$ and $T_2 \times I_n$ are contained in $\mathcal{Q}_h^\Gs$, we use a simple averaging in the evaluation of $\bw\cdot\nabla u_h$ in  \eqref{eval}.
Recall that the solution of  the continuous problem \eqref{transport1} satisfies the zero mean condition \eqref{average}, which
corresponds to the mass conservation law valid for the original problem \eqref{transport}.
We investigate whether the  condition \eqref{average} is preserved for the finite element formulation \eqref{brokenweakformu_h}.

Assume that $u_h$ is a solution of \eqref{brokenweakformu_h}. Denote $\bar u_h(t)=\int_{\Gamma(t)} u_h ds$.
We have  $\int_{\Gamma(t)} f\,ds=0$ for all $t>0$. In \eqref{brokenweakformu_h}, set $v_h=1$ for $t\le t_n$ and $v_h=0$ for $t > t_n$. This implies $
\bar u_{h,-}(t_{n}):=\int_{\Gamma(t_n)} u_{-}^n\,ds=0$ for $n=0,1,\dots$.
 Setting $v_h=t-t_{n-1}$ for $t_{n-1}\le t\le t_n$ and $v_h=0$ otherwise,
we additionally get $\int_{t_{n-1}}^{t_n} \bar u_h (t)\, dt=0$. Summarizing, we obtain the following:
\begin{equation}\label{means}
\bar u_{h,-}(t_{n})=0\quad\text{and}\quad\int_{t_{n-1}}^{t_n} \bar u_h(t)\, dt=0,\quad n=1,2,\dots.
\end{equation}
For a \emph{stationary} surface, $\bar u_h(t)$ is a piecewise affine function and  thus \eqref{means} implies $\bar u_h(t)\equiv0$, i.e., we have exact mass conservation on the discrete level.
 If the surface evolves, the finite element method is not necessarily mass conserving:  \eqref{means} holds, but $\bar u_h(t) \neq 0$ may occur for $ t_{n-1} \le t < t_n$. To enforce a better mass conservation and
 enhance stability  of the finite element method, cf. Remark~\ref{remMotiv},  we introduce a \emph{consistent} stabilizing term involving the quantity $\bar u_h (t)$ to the discrete bilinear form. More precisely, define
\begin{equation} \label{stabblf}
  a_\sigma(u,v):= a(u,v)+\sigma \int_0^T \bar u(t) \bar v(t) \, dt, \quad \sigma \geq 0.
\end{equation}
Instead of \eqref{brokenweakformu_h} we consider the stabilized version: Find $u_h \in W_{h}$ such that
\begin{equation} \label{brokenweakformu_h1}
  \la \dot u_h ,v_h\ra_b +a_\sigma(u_h,v_h)+d(u_h,v_h) =(f,v_h)_0 \quad \text{for all}~~v_h \in W_{h}.
\end{equation}
As mentioned above, taking $\sigma >0$ we expect both a stabilizing effect and an improved mass conservation property. Adding this stabilization term does not lead to significant additional computational costs for computing the stiffness matrix, cf. Section~\ref{sectimplem}.

For the solution $u\in W$ of \eqref{brokenweakformu}, the stabilization term vanishes: $\bar u(t)=0$.
Therefore the error $e=u-u_h$ of the finite element method \eqref{brokenweakformu_h1} satisfies the
Galerkin orthogonality relation:
\begin{equation}\label{Galerkin}
 \la \dot e ,v_h\ra_b +a_\sigma(e,v_h)+d(e,v_h) =0 \quad \text{for all}~~v_h \in W_{h}.
\end{equation}

\subsection{Implementation aspects} \label{sectimplem}
We comment on a few implementation aspects. More details are found in the recent  article \cite{refJoerg}.

By choosing the test functions $v_h$ in \eqref{brokenweakformu_h1} per time slab, as in standard space-time DG methods, one obtains an implicit time stepping algorithm.
 Two main implementation issues are the approximation of the space-time integrals in the bilinear form $\la \dot u_h ,v_h\ra_b +a_\sigma(u_h,v_h)$ and the representation of the  finite element trace functions in $W_{h}$.  To approximate the integrals, one makes use of the transformation formula \eqref{transform} converting space-time integrals to surface integrals over $\Gs$, and next one
approximates $\Gs$ by a `discrete' surface $\Gs^h$; this is done locally, i.e. time slab per time slab.    The approximate surface $\Gs^h$ can be the zero level of $\phi_h\in W_{\hat h}$,
where $\phi_h$ is a bilinear finite element approximation of a level set function $\phi(x,t)$, the zero level of which is the surface $\Gs$.  To reduce the ``geometric error'' it may be efficient  to determine $\phi_h \in W_{\hat h}$  in a finite element space  with mesh size $\hat h < h$, $\hat{\Delta t} < \Delta t$, e.g., $\hat h = \frac 12 h$, $\hat{\Delta t} = \frac12 \Delta t$ (one regular refinement of the given
outer space-time mesh).
Within each space-time prism $T\times I_n \in \mathcal{Q}_h^\Gs$ the zero level of  $\phi_h\in W_{\hat h}$ can be represented as a union of tetrahedra, cf. \cite{refJoerg}, and standard quadrature formulas can be used.
 Results of numerical experiments obtained using  such treatment of integrals over $\Gs$ are reported
in \cite{refJoerg,GOReccomas,ORXsinum}.

For the representation of  the finite element  functions in $W_{h}$ it is natural to use traces of the standard nodal basis functions in the volume space-time finite element space $V_{h}$.   In general, these trace functions form a frame in $W_{h}$. A finite element surface solution is represented as a linear combination of the elements from this frame.
Linear systems resulting in every time step  may have more than one solution, but every solution yields the same trace function, which is the unique solution of \eqref{brokenweakformu_h1}. If $\Delta t \sim h$ and $\|\bw \|_{L^\infty(\Gs)} = \mathcal{O}(1)$ then the number of tetrahedra $T \in \T_n$ that are intersected by $\Gamma(t)$, $t \in I_n$, is of the order $\mathcal{O}(h^{-2})$. Hence, per time step the linear systems have $\mathcal{O}(h^{-2})$ unknowns, which is the same complexity as a discretized spatially \emph{two}-dimensional elliptic problem. Note that although we derived the method in $\Bbb{R}^{3+1}$, due to the time stepping and the trace operation, the discrete problems have two-dimensional complexity. Since the discrete problems have a complexity of (only) $\mathcal{O}(h^{-2})$ it may be efficient to use  a  sparse direct solver for computing the discrete solution. Linear algebra aspects of the surface finite element method  have been addressed in \cite{OlshanskiiReusken08} and will be
further investigated in future work.

 The stabilization term   in \eqref{stabblf} does not cause significant additional computational work. In one time slab it has the form $\int_{t_{n-1}}^{t_n} \bar u(t) \bar v(t) \, dt$.  Let $\phi_i$, $1 \leq i \leq M$,  denote the nodal basis functions in the outer space $V_h$, then the $M \times M$- matrix representing this bilinear form has entries $\int_{t_{n-1}}^{t_n}  \int_{\Gamma(t)} \phi_j \, ds  \int_{\Gamma(t)} \phi_i \, ds\, dt$. If quadrature for $\int_{t_{n-1}}^{t_n}$, with nodes $\xi_1,\ldots, \xi_k \in [t_{n-1},t_n]$, is applied this results in a stabilization  matrix of the form $S=\sum_{r=1}^k \alpha_r z_r z_r^T$, with $\alpha_r \in\Bbb{R}$, $z_r \in \Bbb{R}^M$. The vector $z_r$ has entries $(z_r)_i= \int_{\Gamma(\xi_r)} \phi_i(s, \xi_r)\, ds$. We need only a few quadrature points, e.g. $k=2$, hence $S$ is  a sum of only a few rank one matrices. Since the stabilization matrix is symmetric positive semi-definite it also  improves the conditioning of the stiffness matr
 ix.

\section{Stability of the finite element method} \label{sec4}
We present a stability analysis of the discrete problem \eqref{brokenweakformu_h1} for the positive definite case, cf. Remark~\ref{remPD}. In Remark~\ref{remMotiv} below we explain why we restrict ourselves to the positive definite case and comment on the role of the stabilization. We introduce the following mesh-dependent norm:
\[
\enorm{u}_h:=\left(\max_{n=1,\dots,N}\|u_{-}^n\|_{t^n}^2 + \sum_{n=1}^N \|[u]^{n-1}\|_{t_{n-1}}^2+\|u\|_H^2\right)^\frac12.
\]

\begin{theorem} \label{stabadd}
Assume \eqref{ass7}  and take $ \sigma \geq  \frac{\nu_d}2\max\limits_{t\in[0,T]}\frac{c_F(t)}{|\Gamma(t)|}$, where $c_F(t)$ is defined in \eqref{Fr}. Then
the inf-sup estimate
\begin{equation} \label{infsuph}
 \inf_{u\in W^b} \sup_{v\in W^b}\frac{\la \dot u,v\ra_b +a_\sigma(u,v)+d(u,v)}{\enorm{v}_h \enorm{u}_h} \geq c_s
\end{equation}
and the ellipticity estimate
\begin{equation} \label{coer}
 \la \dot u,u\ra_b +a_\sigma(u,u)+d(u,u) \geq 2c_s\left(\|u_{-}^N\|_T^2 + \sum_{n=1}^N \|[u]^{n-1}\|_{t_{n-1}}^2+\|u\|_H^2\right)
\end{equation}
for all $ u \in W^b$ hold,  with $c_s=\frac14\min\{1,\nu_d,c_0\}$ and $c_0$ from \eqref{ass7}. The results in \eqref{infsuph}, \eqref{coer} also hold
with $W^b$  replaced by $W_h$.
\end{theorem}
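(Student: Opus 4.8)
\noindent\emph{Proof plan.} Write $B(u,v):=\la\dot u,v\ra_b+a_\sigma(u,v)+d(u,v)$. The plan is to first prove the ellipticity estimate \eqref{coer} by a direct space-time energy computation, and then to bootstrap it to the inf-sup estimate \eqref{infsuph} by augmenting the test choice $v=u$ with a time-truncated copy of $u$ that is able to ``see'' the $\max$-in-time contribution to $\enorm{\cdot}_h$. For the energy identity, on each slab $I_n$ the Leibniz formula \eqref{Leibniz} applied to $u_n^2$ gives $\la\dot u_n,u_n\ra=\tfrac12\big(\|u_-^n\|_{t_n}^2-\|u_+^{n-1}\|_{t_{n-1}}^2\big)-\tfrac12\int_{I_n}\!\int_{\Gamma(t)}\DivG\bw\,u_n^2\,ds\,dt$. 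Summing over $n$, the volume terms combine with the $(\DivG\bw\,u,u)_0$ contribution inside $a_\sigma(u,u)$ to leave $+\tfrac12(\DivG\bw\,u,u)_0$, while the boundary terms together with $d(u,u)=\sum_{n=1}^N\|u_+^{n-1}\|_{t_{n-1}}^2-\sum_{n=2}^N(u_-^{n-1},u_+^{n-1})_{t_{n-1}}$ telescope node by node (using $\tfrac12\|a\|^2+\tfrac12\|b\|^2-(a,b)=\tfrac12\|a-b\|^2$ at each interior time and $[u]^0=u_+^0$ at $t_0$). This yields
\[
B(u,u)=\tfrac12\|u_-^N\|_T^2+\tfrac12\sum_{n=1}^N\|[u]^{n-1}\|_{t_{n-1}}^2+\tfrac12(\DivG\bw\,u,u)_0+\nu_d\|\nablaG u\|_0^2+\sigma\int_0^T\bar u(t)^2\,dt .
\]

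To obtain \eqref{coer} I would bound the last three terms from below. Splitting $\nu_d\|\nablaG u\|_0^2$ into two equal halves and applying the Friedrichs inequality \eqref{Fr} to one half pointwise in $t$, together with \eqref{ass7}, gives $\tfrac12\int_{\Gamma(t)}\DivG\bw\,u^2\,ds+\tfrac{\nu_d}2\int_{\Gamma(t)}|\nablaG u|^2\,ds\ge\tfrac12\int_{\Gamma(t)}(\DivG\bw+\nu_d c_F(t))u^2\,ds-\tfrac{\nu_d c_F(t)}{2|\Gamma(t)|}\bar u(t)^2\ge\tfrac{c_0}{2}\int_{\Gamma(t)}u^2\,ds-\tfrac{\nu_d c_F(t)}{2|\Gamma(t)|}\bar u(t)^2$. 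Integrating in $t$, the negative mean term is absorbed by the stabilization exactly under the hypothesis $\sigma\ge\tfrac{\nu_d}2\max_t c_F(t)/|\Gamma(t)|$, leaving $\tfrac{c_0}2\|u\|_0^2$; the untouched second half $\tfrac{\nu_d}2\|\nablaG u\|_0^2$ supplies the gradient control. Together with the boundary terms $\tfrac12\|u_-^N\|_T^2+\tfrac12\sum_n\|[u]^{n-1}\|_{t_{n-1}}^2$, collecting the four coefficients $\tfrac12,\tfrac12,\tfrac{c_0}2,\tfrac{\nu_d}2$ and taking the smallest gives \eqref{coer} with $2c_s=\tfrac12\min\{1,\nu_d,c_0\}$. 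I stress that the zero-mean condition is never used here: the stabilization term plays its role, so the estimate holds on all of $W^b$.

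For \eqref{infsuph} the difficulty is that \eqref{coer} controls only $\|u_-^N\|_T$, whereas $\enorm{u}_h$ contains $\max_n\|u_-^n\|_{t_n}$. Let $m^*$ realise this maximum and let $v^{(m^*)}$ equal $u$ on $I_1,\dots,I_{m^*}$ and $0$ afterwards. Rerunning the identity above on $[0,t_{m^*}]$ and discarding the (nonnegative) coercive remainder as in the previous paragraph gives $B(u,v^{(m^*)})\ge\tfrac12\|u_-^{m^*}\|_{t_{m^*}}^2=\tfrac12\max_n\|u_-^n\|_{t_n}^2$; moreover the only extra jump created, $[v^{(m^*)}]^{m^*}=-u_-^{m^*}$, shows $\enorm{v^{(m^*)}}_h\le\sqrt2\,\enorm{u}_h$. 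Testing with $v=u+\theta\,v^{(m^*)}$ for a fixed weight $\theta>0$, the coercivity of the $u$-part controls $\|u\|_H^2$ and $\sum_n\|[u]^{n-1}\|^2$ while the $v^{(m^*)}$-part supplies the missing $\max_n\|u_-^n\|^2$, so $B(u,v)\ge 2c_s\enorm{u}_h^2$ while $\enorm{v}_h\le C\,\enorm{u}_h$; optimising $\theta$ and tracking the constants then yields \eqref{infsuph}. Finally, since truncating and scaling a broken function stays in the same space, the constructed test functions lie in $W_h$ whenever $u\in W_h$, and \eqref{coer} holds on $W_h\subset W^b$ a fortiori; this gives the last sentence of the theorem.

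The main obstacle is precisely this last step: the energy identity is ``blind'' to all cross-sections except the terminal one at $t_N$, so recovering the genuinely stronger $\max$-in-time norm that appears in the inf-sup denominator forces the introduction of the truncated test functions and a careful accounting of constants (the $\sqrt2$ from the artificial jump and the weight $\theta$). By contrast, the energy identity and the Friedrichs/stabilization balance in the first two steps are routine once the telescoping bookkeeping is organised correctly.
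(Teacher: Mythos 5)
Your proof is correct and follows essentially the same route as the paper: the paper derives both estimates from a single truncated-at-$t_M$ inequality \eqref{aux509} (so \eqref{coer} is the case $M=N$) and then, with $M$ realising the maximum, tests with $v=\tilde u+u$, which is exactly your $v=u+\theta v^{(m^*)}$ with $\theta=1$. Your Friedrichs/stabilization absorption under the hypothesis on $\sigma$, the $\sqrt2$ bound coming from the artificial jump $[v^{(m^*)}]^{m^*}=-u_-^{m^*}$, and the closing observation that truncation keeps test functions in $W_h$ all coincide with the paper's argument.
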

\begin{proof} Take $u \in W^b$, $u \neq 0$, and
let $M\in\{1,\dots,N\}$. Set $\tilde{u}=u$ for $t\in(0,t_M]$ and  $\tilde{u}=0$ for $t\in(t_M,T)$.
Applying  partial integration  on every time interval we get
\[
  \la \dot u, \tilde{u} \ra_b= \frac12 \sum_{n=1}^M \Big( \|u_{-}^n\|_{t_n}^2 - \|u_{+}^{n-1}\|_{t_{n-1}}^2 \Big)
  -\frac12 \int_0^{t_M}(\DivG \bw, u^2)_{\Gamma(t)}\,dt .
\]
It is also straightforward to derive
\[
d(u,\tilde{u})=  - \frac12 \sum_{n=1}^M \Big( \|u_{-}^n\|_{t_n}^2 -  \|u_{+}^{n-1}\|_{t_{n-1}}^2 \Big)   + \frac12  \|u_{-}^M\|_{t_M}^2 + \frac12 \sum_{n=1}^M \|[u]^{n-1}\|_{t_{n-1}}^2.
\]
The Friedrichs inequality \eqref{Fr} yields
\[
  \int_{\Gamma(t)} |\nabla_\Gamma u|^2 \, ds \geq  c_F(t) \big( \int_{\Gamma(t)} u^2 \, ds - \frac{1}{|\Gamma(t)|} \bar u^2(t) \big).
\]
Using this, we get
\begin{align*}
  & a_\sigma(u,\tilde{u})   =  \int_0^{t_M} \nu_d \|\nablaG u\|^2_{L^2(\Gamma(t))} + (\DivG \bw, u^2)_{L^2(\Gamma(t))}  + \sigma \bar u(t)^2 \, dt\\
 & \geq \int_0^{t_M} \frac12   (\nu_d c_F + 2\DivG \bw ,u^2)_{L^2(\Gamma(t))} + (\sigma-\frac{\nu_d}2\frac{c_F(t)}{|\Gamma(t)|})  \bar u(t)^2  +  \frac{\nu_d}2\|\nablaG u\|^2_{L^2(\Gamma(t))}\,dt \\ &  \geq    \int_0^{t_M} \frac12 (\nu_d c_F + 2\DivG \bw ,u^2)_{L^2(\Gamma(t))}+ \frac{\nu_d}{2}\|\nablaG u\|^2_{L^2(\Gamma(t))}\,dt.
\end{align*}
Combining the relations above and using \eqref{ass7}, we get
\begin{multline}
\label{aux509}
  \la \dot u ,\tilde{u}\ra_b +a_\sigma(u,\tilde{u})+d(u,\tilde{u})  \geq
 \frac12\left(\|u_{-}^M\|_{t_M}^2 + \sum_{n=1}^M \|[u]^{n-1}\|_{t_{n-1}}^2\right.\\\left.+ \int_0^{t_M}c_0\|u\|^2_{L^2(\Gamma(t))}+\nu_d\|\nablaG u\|^2_{L^2(\Gamma(t))}\,dt\right).
\end{multline}
Taking $M=N$ in this inequality proves \eqref{coer}.
Let $M$ be such   that $\|u_{-}^M\|_{t^M}=\max_{n=1,\dots,N}\|u_{-}^n\|_{t^n}^2$.
Setting $v=\tilde{u}+u$, using \eqref{aux509} and performing obvious computations gives \eqref{infsuph}.
Since $ W_{h}\subset W^b$ and $u\in W_h\,\Rightarrow\,\tilde{u}\in W_{h}$, the results in \eqref{infsuph}, \eqref{coer} also hold on the finite element subspace.\quad
\end{proof}
\medskip
\noindent In this stability result there are no restrictions on the size of  $h$ and $\Delta t$. In particular the stability is guaranteed even if $\Delta t$ is large. This is in agreement with the \emph{strong robustness of the method},  observed in the numerical experiments in  \cite{refJoerg,ORXsinum,GOReccomas}.\medskip

\begin{remark} \label{remMotiv} \rm We comment on the assumptions we use in Theorem~\ref{stabadd}.  An inf-sup result in $W^b$, similar to \eqref{infsuph}, can also be derived for the general (indefinite) case, i.e., without assuming \eqref{ass7}, and without stabilization. Such a result is given in Lemma 5.2 in \cite{ORXsinum}. The proof uses a test function of the form $v= \mu e^{-\gamma t} u +  z$, with a suitable $\mu>0,\,\gamma >0$ and $z \in W^b$. The factor $e^{-\gamma t}$ is used to control the term $(\DivG \bw u, u)_0$. Of course, the stability constant then depends on $T$ and deteriorates for $T \to \infty$. For the discrete space $W_h$, however, we are not able to derive a stability result for the general (indefinite) case. The key point is that for $u_h \in W_h$ a test function of the form $e^{-\gamma t} u_h$ is not allowed, since it is not an element of the test space $W_h$. Using an approximation (interpolation or projection) of $e^{-\gamma t} u_h$ in the finite element
  space we
  are not able to get
sufficient control of the term  $(\DivG \bw u, u)_0$. A similar difficulty, for the general problem, arises if one applies a discrete analogon of the Gronwall argument outlined in Remark~\ref{remGronwall}: Let $u = u_h \in W_h$ be a finite element function. For the corresponding test function one can take  $v=\tilde{u}$ as in the proof above, i.e., $v=u_{|[0,t_M]}$. Taking $\sigma =0$ we obtain
\[ \begin{split}
  & \frac12 \|u_{-}^M\|_{t_M}^2 +\frac12 \sum_{n=1}^M \|[u]^{n-1}\|_{t_{n-1}}^2+  \nu_d\int_0^{t_M} \int_{\Gamma(t)} (\nablaG u)^2 \, ds \,dt \\ & = \int_0^{t_M} \int_{\Gamma(t)} fu \,ds \,dt - \frac12  \int_0^{t_M} \int_{\Gamma(t)} \DivG \bw u^2 \, ds \, dt.
\end{split} \]
Define $h(t):=  \frac12 \int_{\Gamma(t)} u^2 \, ds  + \sum_{n=1}^M \|[u]^{n-1}\|_{t_{n-1}}^2+ \nu_d \int_0^t \int_{\Gamma(\tau)} (\nabla_\Gamma u)^2 \, ds d\tau$, for $t \in (t_{M-1},t_M]$, $M=1, \ldots N$. With similar arguments as in  Remark~\ref{remGronwall} we get  the estimate
\[
 h(t_M) \leq \frac12 \|f\|_0^2 + (1+\|\DivG \bw\|_{L^\infty(\Gs)}) \int_{0}^{t_M} h(\tau)\, d\tau, \quad M=1, \ldots,N,
\]
cf. \eqref{Gronwbound}. Define $ a_M=h(t_M)$. For a discrete Gronwall lemma we need an inequality of the form $a_M \leq c+ \sum_{1 \leq k < M} g_k a_k$, $M=1,\ldots ,N$.
In our case we have to control $\int_{0}^{t_M} h(\tau)\,d\tau$ by the values $h(t_k)$, $k=0,\ldots,M$. For a stationary $\Gamma(t)$, this can be realized using the fact that $u$ is linear w.r.t. $t$ on $I_n$. For an evolving $\Gamma(t)$, however, the function $h(t)$ can have rather general behavior and it is not clear under which reasonable assumptions the integral can be bounded by the function values $h(t_k)$.

\emph{In view of these observations we restrict analysis to the nicer  positive definite case,  hence we assume that \eqref{ass7} holds}. As mentioned
in Remark~\ref{remPD}, condition \eqref{ass7} is not sufficient for $A$ to be positive definite on $W_h$. The difficulty comes from the functions  $u(x,t)$ that are constant in spatial directions. For the continuous case we dealt with this problem by restricting to the subspace $\widetilde{W}$, cf.~\eqref{Wt}. In case of an evolving $\Gamma(t)$, requiring the discrete solution $u_h$ to lie in  $\widetilde{W}$ is a too strong condition, which leads to an unacceptable reduction of  the degrees of freedom (often, only $u_h=0$ is allowed). This is the reason, why we introduce the stabilization. For $\sigma$ sufficiently large the corresponding stabilized operator $A_\sigma$ is positive definite on $W_h$. In numerical experiments we observe that in general $\sigma =0$ results in a stable method. We have  the following heuristic explanation for this. The discrete solution remains the same if we restrict the discretization to the subspace $\widetilde{W}_h \subset W_h$ of functions that sati
 sfy \eqref{means}. The distance of this space $\widetilde{W}_h$  to  $\widetilde{W}^b=\{\, u \in W^b~|~ \bar u(t)=0\,\}$ is expected to be small. On the latter space the operator $A$ without stabilization  is positive definite (if \eqref{ass7} holds) and thus it is plausible that this positive definiteness holds on $\widetilde{W}_h$, too.
\end{remark}
\ \\

The ellipticity result \eqref{coer} is sufficient for existence of a unique solution and \eqref{infsuph} yields a priori bound in $\enorm{\cdot}_h$ norm. We summarize this in  the following proposition.
\begin{proposition}
Assume \eqref{ass7}  and take $ \sigma$ as in Theorem~\ref{stabadd}. Then the discrete problem \eqref{brokenweakformu_h1}
has a unique solution $u_h \in W_{h}$. For $u_h$  the a priori estimate
\begin{equation} \label{aprior}
\enorm{u_h}_h \le c_s^{-1}  \|f\|_{0}.
\end{equation}
holds, with $c_s$ as in Theorem~\ref{stabadd}.
\end{proposition}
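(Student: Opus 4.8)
The plan is to use that $W_{h}$ is finite-dimensional, so that \eqref{brokenweakformu_h1} is a square linear system and it suffices to prove uniqueness in order to obtain existence. Throughout, write $B_h(u,v):=\la\dot u,v\ra_b+a_\sigma(u,v)+d(u,v)$ for the discrete bilinear form, and recall from Theorem~\ref{stabadd} that both the ellipticity estimate \eqref{coer} and the inf-sup estimate \eqref{infsuph} are valid with $W^b$ replaced by $W_{h}$. The heavy lifting has already been done in Theorem~\ref{stabadd}; what remains is a short abstract argument, handling well-posedness via ellipticity and the a priori bound via the inf-sup condition, exactly as announced in the text preceding the statement.

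For uniqueness I would consider the homogeneous problem ($f=0$) and test with $v_h=u_h$. Ellipticity \eqref{coer} then gives
\[
0=B_h(u_h,u_h)\ge 2c_s\Big(\|(u_h)_{-}^N\|_T^2+\sum_{n=1}^N\|[u_h]^{n-1}\|_{t_{n-1}}^2+\|u_h\|_H^2\Big)\ge 2c_s\|u_h\|_H^2,
\]
so that $\|u_h\|_H=0$. Since $u_h$ is the trace on $\Gs$ of a function that is continuous piecewise bilinear in the volume, $\|u_h\|_H=0$ forces $u_h\equiv 0$ on $\Gs$, i.e. $u_h=0$ as an element of $W_{h}$. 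Thus the linear map $u_h\mapsto B_h(u_h,\cdot)$ from $W_{h}$ to $W_{h}'$ is injective, and because $\dim W_{h}=\dim W_{h}'<\infty$ it is bijective; hence \eqref{brokenweakformu_h1} has a unique solution $u_h\in W_{h}$ for every right-hand side.

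For the a priori estimate I would apply the inf-sup bound \eqref{infsuph} on $W_h$ to the solution $u_h$ and then insert the equation $B_h(u_h,v_h)=(f,v_h)_0$:
\[
c_s\enorm{u_h}_h\le \sup_{0\neq v_h\in W_{h}}\frac{B_h(u_h,v_h)}{\enorm{v_h}_h}=\sup_{0\neq v_h\in W_{h}}\frac{(f,v_h)_0}{\enorm{v_h}_h}\le \|f\|_0 ,
\]
where the last inequality uses the Cauchy--Schwarz estimate $(f,v_h)_0\le\|f\|_0\|v_h\|_0$ together with $\|v_h\|_0\le\|v_h\|_H\le\enorm{v_h}_h$, the two latter bounds being immediate from the definitions of $\|\cdot\|_H$ and $\enorm{\cdot}_h$. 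Dividing by $c_s$ yields \eqref{aprior}.

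There is no genuine obstacle here, but two points deserve care. First, ellipticity \eqref{coer} controls $\|u_h\|_H$ (and the final-time value and the jumps) but \emph{not} the full norm $\enorm{\cdot}_h$, whose $\max_n\|u_-^n\|$ part is not bounded below by $B_h(u,u)$; this is precisely why well-posedness must rest on the coercivity-type estimate while the norm bound \eqref{aprior} genuinely requires the inf-sup property. Second, one should keep in mind the frame-versus-basis representation issue discussed in Section~\ref{sectimplem}: the algebraic system may be singular and admit several coefficient vectors, but the associated trace function in $W_{h}$ is the unique solution, which is exactly the object for which the argument above establishes existence, uniqueness, and the stability bound.
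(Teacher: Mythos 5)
Your proposal is correct and follows exactly the route the paper indicates (the paper itself only sketches it in the sentence preceding the proposition): uniqueness, hence existence in the finite-dimensional space $W_h$, from the ellipticity estimate \eqref{coer}, and the a priori bound \eqref{aprior} from the inf-sup estimate \eqref{infsuph} combined with $(f,v_h)_0\le\|f\|_0\|v_h\|_H\le\|f\|_0\enorm{v_h}_h$. Your side remarks --- that \eqref{coer} alone does not control the $\max_n\|u_{-}^n\|_{t^n}$ part of $\enorm{\cdot}_h$, and that uniqueness concerns the trace function rather than the (possibly non-unique) coefficient vector of the frame representation --- are both accurate and consistent with the paper's discussion.
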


\section{Continuity result} \label{sec5}
We derive continuity results for the bilinear form of the finite element method.
\begin{lemma} \label{Lem_cont}
For any $e,v\in W^b$  the following  holds, with constants $c$ independent of $e, v, h, N$:
\begin{align} \label{cont}
  |\la \dot e,v\ra_b +a_\sigma(e,v)+d(e,v)|&
\leq c  \enorm{v}_h(\|e\|_{W^b}+\sum_{n=0}^{N-1}\|[e]^n\|_{t^n}),\\
|\la \dot e,v\ra_b +a_\sigma(e,v)+d(e,v)|&
\leq c  \enorm{e}_h(\|v\|_{W^b}+\sum_{n=1}^{N-1}\|[v]^n\|_{t^n}+\|v\|_T). \label{cont_dual}
\end{align}
\end{lemma}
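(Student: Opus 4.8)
The plan is to bound each of the three terms $\la \dot e,v\ra_b$, $a_\sigma(e,v)$, and $d(e,v)$ separately, since the triple bilinear form is just their sum and the triangle inequality then assembles the result. For the diffusion-reaction part $a_\sigma$, I would use the Cauchy--Schwarz inequality in the $(\cdot,\cdot)_0$ inner product together with the continuity bound for $a(\cdot,\cdot)$ already recorded (with constant $\nu_d + \alpha_\infty$), plus the obvious estimate $\sigma \int_0^T \bar e \, \bar v \, dt \le \sigma \|e\|_0 \|v\|_0$ for the stabilization term (using $|\bar u(t)| \le |\Gamma(t)|^{1/2}\|u(\cdot,t)\|_{L^2(\Gamma(t))}$). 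This yields $|a_\sigma(e,v)| \le c\,\|e\|_H\|v\|_H$, which is controlled by $\|e\|_{W^b}\enorm{v}_h$ and symmetrically by $\enorm{e}_h\|v\|_{W^b}$, since $\|\cdot\|_H$ is dominated by both $\|\cdot\|_{W^b}$ and $\enorm{\cdot}_h$.

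For the jump term $d(e,v) = \sum_{n=1}^N ([e]^{n-1},v_+^{n-1})_{t_{n-1}}$, I would apply Cauchy--Schwarz on each cross-section $\Gamma(t_{n-1})$ and then the discrete Cauchy--Schwarz inequality over $n$. For the first estimate \eqref{cont}, the factor $\|v_+^{n-1}\|_{t_{n-1}}$ must be absorbed into $\enorm{v}_h$: here I would write $v_+^{n-1} = v_-^{n-1} + [v]^{n-1}$, bound $\|v_-^{n-1}\|_{t_{n-1}}$ by $\max_n \|v_-^n\|_{t_n}$ and $\|[v]^{n-1}\|_{t_{n-1}}$ by the jump-sum term, both of which appear in $\enorm{v}_h$; the remaining $\sum_n \|[e]^{n-1}\|_{t_{n-1}}$ factor is exactly the extra term appearing on the right of \eqref{cont}. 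For the dual estimate \eqref{cont_dual}, the roles switch: the jump $\|[e]^{n-1}\|_{t_{n-1}}$ is absorbed into $\enorm{e}_h$ and the $\|v_+^{n-1}\|_{t_{n-1}}$ factors generate the $\sum \|[v]^n\|$ and $\|v\|_T$ contributions after rewriting $v_+$ via $v_-$ and the jump and re-indexing.

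The material-derivative term $\la \dot e, v\ra_b = \sum_n \la \dot e_n, v_n\ra$ is the delicate one. For \eqref{cont}, where I keep $\enorm{v}_h$ on the right, I can use $|\la \dot e_n, v_n\ra| \le \|\dot e_n\|_{H_n'}\|v_n\|_{H} $ and sum via discrete Cauchy--Schwarz to get $\le \|e\|_{W^b}\|v\|_H \le \|e\|_{W^b}\enorm{v}_h$, which is clean. The genuine obstacle is the dual estimate \eqref{cont_dual}, where I need $\enorm{e}_h$ (which contains \emph{no} $\|\dot e\|_{H'}$ information) rather than $\|e\|_{W^b}$ on the right. The direct pairing $\|\dot e_n\|_{H_n'}$ is therefore unavailable, so I would instead integrate by parts in time on each slab, moving the derivative onto $v$: using the identity behind \eqref{eval}, $\la \dot e_n, v_n\ra = -\la \dot v_n, e_n\ra - (\DivG\bw\, e, v)_0 + \text{boundary terms at } t_{n-1}, t_n$. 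This transfers the derivative cost onto $\|\dot v\|_{W^b}$-type quantities (absorbed by the $\|v\|_{W^b}$ factor) and the area term into the $a_\sigma$-type bound, while the boundary terms $\|e_-^n\|_{t_n}$, $\|e_+^{n-1}\|_{t_{n-1}}$ are exactly the cross-section and jump quantities living in $\enorm{e}_h$; the endpoint contributions at $t=T$ and the jumps in $v$ are what produce the additional $\|v\|_T$ and $\sum_{n=1}^{N-1}\|[v]^n\|_{t^n}$ terms on the right. Carefully tracking these boundary terms through the re-indexing, and verifying that every quantity I generate genuinely belongs to one of the listed norms, is where the real work lies.
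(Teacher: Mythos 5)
Your overall route coincides with the paper's: \eqref{cont} via the direct duality pairing $|\la\dot e_n,v_n\ra|\le\|\dot e_n\|_{H_n'}\|v_n\|_H$ together with $v_+^{n-1}=v_-^{n-1}+[v]^{n-1}$, and \eqref{cont_dual} via temporal integration by parts slab by slab. But there is a genuine gap in how you handle the jump form $d(e,v)$ in the dual estimate. You propose to bound $d(e,v)=\sum_{n=1}^N([e]^{n-1},v_+^{n-1})_{t_{n-1}}$ \emph{on its own}, absorbing $\|[e]^{n-1}\|_{t_{n-1}}$ into $\enorm{e}_h$ and rewriting $v_+^{n-1}=v_-^{n-1}+[v]^{n-1}$. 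That rewriting cannot deliver \eqref{cont_dual}: for $n=1$ it produces $v_+^0=[v]^0$, which is absent from the right-hand side (the jump sum there starts at $n=1$), and for $2\le n\le N$ it leaves the \emph{interior} trace norms $\|v_-^{n-1}\|_{t_{n-1}}$, which are not among $\|v\|_{W^b}$, $\sum_{n=1}^{N-1}\|[v]^n\|_{t^n}$, $\|v\|_T$, and cannot be dominated by $\|v\|_{W^b}$ with a constant independent of $N$: a trace inequality on a slab of length $\Delta t$ carries a factor $\Delta t^{-1/2}$ (take $v$ constant in time on one slab of a stationary surface, so $\|v_-^n\|_{t_n}^2\sim 1$ while $\|v_n\|_{W_n}^2\sim\Delta t$). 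So no standalone estimate of $d(e,v)$ can yield \eqref{cont_dual} with constants uniform in $N$; your phrase that the $v_+$ factors ``generate the $\sum\|[v]^n\|$ and $\|v\|_T$ contributions'' after this rewriting is not correct as stated.

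The paper's proof never estimates $d(e,v)$ separately in the dual bound. Summation by parts of the boundary terms (an Abel rearrangement over $n$) gives
\[
\la \dot e,v\ra_b = -d(e,v)-\sum_{n=1}^{N-1}([v]^{n},e_-^{n})_{t_{n}}+(e_-^{N},v)_T-(\DivG \bw\, e,v)_0-\la \dot v,e\ra_b,
\]
so the problematic term cancels \emph{exactly} against the $+d(e,v)$ in the bilinear form, and the only surviving $v$-factors are $[v]^n$ for $1\le n\le N-1$, $v_-^N$, $\dot v_n$ in $H_n'$, and $\|v\|_0$ --- each paired with an $e$-quantity contained in $\enorm{e}_h$ ($\max_n\|e_-^n\|_{t_n}$, $\|e_-^N\|_T$, $\|e\|_H$). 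Your third paragraph gestures toward this (``tracking these boundary terms through the re-indexing''), but as planned you keep $d(e,v)$ as a separately estimated object, which is precisely the step that breaks; the cancellation must be made explicit before any norms are taken. Once that is fixed, the remainder of your argument matches the paper: the $a_\sigma$ bound (your stabilization estimate should carry the factor $\max_{t\in[0,T]}|\Gamma(t)|$, harmlessly absorbed into $c$) and your treatment of \eqref{cont}, where skipping the integration by parts and using $\|\dot e_n\|_{H_n'}$ directly is exactly what the paper does.
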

\begin{proof}
The stabilizing term in $a_\sigma(e,v)$ is estimated as follows:
\begin{equation}\label{est_stab}
\begin{aligned}
\left|\sigma \int_0^T \int_{\Gamma(t)} e \, dx \int_{\Gamma(t)} v dx \, dt\right|&\le
\sigma \int_0^T |\Gamma(t)| \left(\int_{\Gamma(t)} e^2 dx\right)^{\frac12}  \left(\int_{\Gamma(t)} v^2 dx\right)^{\frac12} \, dt\\ &\le  \sigma \max\limits_{t\in[0,T]}|\Gamma(t)|\|e\|_0\|v\|_0.
\end{aligned}
\end{equation}
The material derivative term is treated using integration by part:
\begin{equation*}
\begin{aligned}
  \la \dot e,\right.&\left.v \ra_b= \sum_{n=1}^N \Big( (e_{-}^n,v_{-}^n)_{t_n} - (e_{+}^{n-1},v_{+}^{n-1})_{t_{n-1}}\Big)
  - (\DivG \bw\, e, v )_0 - \la \dot v,e \ra_b\\
  &= -\sum_{n=1}^N([e]^{n-1},v_{+}^{n-1})_{t_{n-1}} -\sum_{n=1}^{N-1}([v]^{n},e_{-}^{n})_{t_{n}}+ (e_{-}^{N},v)_{T}
  - (\DivG \bw\, e, v )_0 - \la \dot v,e \ra_b\\
   &= -d(e,v) -\sum_{n=1}^{N-1}([v]^{n},e_{-}^{n})_{t_{n}} + (e_{-}^{N},v)_{T}
  - (\DivG \bw\, e, v )_0 - \la \dot v,e \ra_b.
\end{aligned}
\end{equation*}
Now we use the relation $\la \dot v,e \ra_b=\sum_{n=1}^N\la \dot v_n,e_n \ra\  $  and
the Cauchy inequality to estimate
\begin{equation}\label{est_deriv}
 \begin{split}
  |\la \dot e,v \ra_b+d(e,v)| & \le
  \|e_{-}^{N}\|_T\|v\|_{T} +\alpha_{\infty} \|e\|_0 \|v\|_0 + \|e\|_H\left(\sum_{n=1}^N\|\dot v_n\|^2_{H'_n}\right)^{\frac12}
  \\ & +\max_{n=1,\dots,N-1}\|e_{-}^n\|_{t_n}\sum_{n=1}^{N-1}\|[v]^{n}\|_{t_{n}} .
\end{split}
\end{equation}
Combining \eqref{est_stab}, \eqref{est_deriv}, and $a(e,v)\le  \nu_d \|\nablaG e\|_0\|\nablaG v\|_0 + \alpha_{\infty}\|e\|_0\|v\|_0$,  we get
\begin{align*}
  & |\la \dot e,v\ra_b +a_\sigma(e,v)+d(e,v)| \\
 & \le
 \|e_{-}^{N}\|_T\|v\|_{T} +(2\alpha_{\infty}+\sigma\max\limits_{t\in[0,T]}|\Gamma(t)|)\|e\|_0 \|v\|_0 + \|e\|_H\left(\sum_{n=1}^N\|\dot v_n\|^2_{H'_n}\right)^{\frac12}\\  & + \nu_d \|\nablaG e\|_0\|\nablaG v\|_0 +\max_{n=1,\dots,N-1}\|e_{-}^n\|_{t_n}\sum_{n=1}^{N-1}\|[v]^{n}\|_{t_{n}}.
\end{align*}
The Cauchy inequality and the definition of the norms $\enorm{e}_h$, $\|v\|_{W^b}$ imply the result in \eqref{cont_dual}.
The inequality in \eqref{cont} is proved by the same arguments, but skipping the integration by parts step.
\quad\end{proof}
\medskip

The norm $\enorm{\cdot}_h$ is weaker than the norm $\|\cdot\|_W$ used for the stability analysis of the original
`differential' weak formulation \eqref{weakformu}, since the latter norm provides control over the material derivative in  $H'$. For the discrete solution we can establish control over the
material derivative  only in a weaker sense, namely in a space dual to the discrete space. Indeed, using estimates as in
the proof of Lemma~\ref{Lem_cont} we get
\[
|a_\sigma(u_h,v)|\\ \leq \enorm{u_h}_h \left((\alpha_{\infty}+\sigma\max\limits_{t\in[0,T]}|\Gamma(t)|)^2\|v\|_0^2+\nu_d^2\|\nablaG v\|_0^2\right)^{\frac12}\le c\,\enorm{u_h}_h \|v\|_H,
\]
and thus for the discrete solution $u_h \in W_{h}$ of \eqref{brokenweakformu_h1} one obtains,
using \eqref{aprior}:
\begin{equation} \label{aprior2}
\sup_{v \in W_{h}}\frac{\la \dot u_h,v \ra_b + d(u_h,v)}{\|v\|_H} = \sup_{v \in W_{h}}\frac{(f,v_h)_0 - a_\sigma(u_h,v)}{\|v\|_H} \le   c  \|f\|_{0}.
\end{equation}

\section{Discretization error analysis} \label{secterroranalysis} \label{sec6}
In this section we prove an error bound for the discrete problem \eqref{brokenweakformu_h1}. The analysis is based on the usual arguments, namely the stability estimate derived above combined with the Galerkin orthogonality and interpolation error bounds. The  surface finite element space is the trace of an outer volume finite element space $V_h$. For the analysis of the discretization error in the surface finite element space we use information on the approximation quality of the outer space. Hence, we need a suitable extension procedure for smooth functions on the space-time manifold $\Gs$. This topic is addressed in subsection~\ref{sec_ext}.

\subsection{Extension of functions defined on  $\Gs$}\label{sec_ext}
For a function $u \in H^2(\Gs)$ we need an extension $u^e \in H^2(U)$, where $U$ is a neighborhood in $\Bbb{R}^4$ that contains the space-time manifold $\Gs$.  Below we introduce  such an extension 
and derive some properties that we need in the analysis. We extend $u$ in a \emph{spatial} normal direction to $\Gamma(t)$ for every $t\in [0,T]$. For this procedure to be well-defined and the properties to hold, we need sufficient smoothness of the manifold $\Gs$. We assume $\Gs$ to be a three-dimensional $C^3$-manifold in $\Bbb{R}^4$.

For  some $\delta>0$ let
\begin{equation} \label{defU}
 U = \{\, \bx:=(x,t) \in  \R^{3+1}~|~{\rm dist}(x,\Gamma(t)) < \delta\, \}
\end{equation}
be a neighborhood of $\Gs$. The value of $\delta$ depends on  curvatures  of $\Gs$ and will be specified below. Let $d: U \rightarrow \R$ be the
signed distance function, $|d(x,t)|:={\rm dist}(x,\Gamma(t))$ for all
$\bx \in U$. Thus, $\Gs$ is the zero level set of $d$.  The spatial gradient  $\bn_{\Gamma}
=\nabla_x d \in \Bbb{R}^3$ is the exterior normal vector for $\Gamma(t)$. The normal vector for $\Gs$ is
\[
\bn_{\Gs}=\nabla d/\|\nabla d\|= \frac{1}{\sqrt{1+V_\Gamma^2}}(\bn_{\Gamma}, - V_\Gamma)^T \in \Bbb{R}^4,\quad V_\Gamma=\bw\cdot \bn_\Gamma.
\]
Recall that $V_\Gamma$ is the normal velocity of the evolving surface $\Gamma(t)$.
The normal $\bn_{\Gamma}$ has a natural extension given by  $\bn(\bx):=\nabla_x d(\bx) \in \Bbb{R}^3$ for all $\bx \in U$. Thus,
$\bn=\bn_{\Gamma}$ on $\Gs$ and $\|\bn (\bx)\|=1$ for all $\bx\in U$.
The spatial Hessian of $d$ is denoted by $\bH \in \Bbb{R}^{3\times3}$. The eigenvalues of $\bH$ are  $\kappa_1(x,t),
\kappa_2(x,t)$, and 0. For  $x \in \Gamma(t)$ the eigenvalues $\kappa_i(x,t)$, $i=1,2$, are
the principal curvatures of $\Gamma(t)$. Due to the smoothness assumptions on $\Gs$,
the principal curvatures are uniformly bounded in space and time:
\[
\sup_{t\in [0,T]}\sup_{x\in\Gamma(t)}(|\kappa_1(x,t)|+|\kappa_2(x,t)|)\le \kappa_{\max}.
\]
We introduce a local coordinate system by using the  projection $\bp:\, U \rightarrow
\Gs $:
\[
 \bp(\bx)=\bx-d(\bx)(\bn(\bx), 0)^T = \big(x-d(x,t)\bn(x,t), t \big) \quad \text{for all}~~\bx=(x,t) \in U.
\]
For $\delta$ sufficiently small, namely $\delta\le\kappa_{\max}^{-1}$,   the  decomposition $\bx=\bp(\bx)+ d(\bx) \big(\bn(\bx), 0\big)$ is unique for all $\bx \in U$  (\cite{GT}, Lemma 14.16).

The extension operator is defined as follows. For a  function $v$ on $\Gs$
we define
\begin{equation} \label{defext}
 v^e(\bx): = v(\bp(\bx)) \quad \text{for all}~~\bx \in U,
\end{equation}
i.e., $v$ is extended along \emph{spatial} normals on $\Gs$.

We need a few relations between surface norms of a function and volumetric norms of its extension.
Define $
 \mu(\bx):= \big(1-d(\bx) \kappa_1(\bx)\big)\big(1-d(\bx) \kappa_2(\bx)\big)$ for $ \bx \in U$.
 From (2.20), (2.23) in \cite{Demlow06} we have
\[
 \mu(\bx) \rd x = \rd s(\bp(\bx)) \, \rd r \, \quad \bx \in U,
\]
where $\rd x$ is the volume measure in $\Bbb{R}^3$,  $\rd s$ the surface measure on $\Gamma(t)$, and $r$ the local coordinate at $y \in \Gamma(t)$ in the (orthogonal) direction $\bn_\Gamma(y)$. Assume $\delta \le \frac14\kappa_{\max}^{-1}$. Using the relation $\kappa_i(\bx)=\frac{\kappa_i(\bp(\bx))}{1+d(\bx)\kappa_i(\bp(\bx))}$, $i=1,2$, $\bx \in U$, ((2.5) in \cite{Demlow06}) one obtains
$\frac{9}{16} \leq \mu(\bx) \leq \frac{25}{16}$ for all $\bx \in U$. Now let $v$ be a function defined on $\Gs$ and $w$, defined on $U$, given by $w(\bx)=g(\bx) v(\bp(\bx))$, with a function $g$ that  is bounded on $U$: $\|g\|_{L^\infty(U)} \leq c_g < \infty$. An example is the pair $w=v^e$ and $v$ given in \eqref{defext}, with $g \equiv 1$.  For  $v,w$ we have the following, with $U(t)=\{\, x \in  \R^{3}~|~{\rm dist}(x,\Gamma(t)) < \delta\, \}$ the cross-section of
$U$ for some $t\in[0,T]$ and a local coordinate system denoted by $\bx =(\bp(\bx),r)$:
\begin{equation} \label{normrelation}
\begin{split}
 &\|w\|_{L^2(U)}^2   = \int_U w^2(\bx) \, \rd \bx  \leq c  \int_0^T \int_{U(t)} w(\bx)^2 \mu(\bx) \, \rd x \rd t\\
 & \leq c\, \int_0^T \int_{U(t)} v(\bp(\bx))^2 \mu(\bx) \, \rd x \rd t  = c \,\int_0^T \int_{-\delta}^{\delta} \int_{\Gamma(t)} v(\bp(\bx))^2 \, \rd s(\bp(\bx))  \rd r \rd t
 \\ & \leq
  c \, \delta  \int_0^T  \int_{\Gamma(t)} v^2 \, \rd s \rd t
 \leq c \delta  \|v\|_{L^2(\Gs)}^2.
\end{split}
\end{equation}
The constant $c$ in the estimate above depends only on the smoothness of $\Gs$ 
and on $c_g$. If in addition  $|g(\bx)| \geq c_0 >0$ on $U$ holds,  then we obtain the  estimate $ \|w\|_{L^2(U)}^2 \geq c \delta  \|v\|_{L^2(\Gs)}^2$, with a constant $c >0 $ depending only on $|V_\Gamma|$ and $c_0$.
Using these results applied to $w=v^e$ as in \eqref{defext} (i.e., $g \equiv 1)$, we obtain the equivalence
\begin{equation}\label{L2equiv}
\|u^e\|_{L^2(U)}^2\simeq \delta  \|u\|_{L^2(\Gs)}^2\qquad\text{for all}~~u\in L^2(\Gs).
\end{equation}
In the remainder of this section, for $u$ defined on $\Gs$, we derive bounds on derivatives  of $u^e$ on $U$ in terms of the derivatives of $u$ on $\Gs$. We first recall a few elementary results.
From
\[
 \nabla_\Gs u = (\bI_{4 \times 4}-\bn_\Gs \bn_\Gs^T)\begin{pmatrix} \nabla_x u^e \\ u_t^e \end{pmatrix}, \quad \nabla_{\Gamma(t)} u= (\bI_{3 \times 3}- \bn_\Gamma \bn_\Gamma^T) \nabla_x u^e,
\]
one derives the following relations between tangential derivatives:
\begin{align}
 \nabla_{\Gamma(t)} u & = \bB \nabla_\Gs u, \quad \bB:=[\bI_{3 \times 3},-V_\Gamma \bn_\Gamma] \in \Bbb{R}^{3 \times 4}, \label{tanggrad1} \\
 \dot u & = (1+V_\Gamma^2) (\nabla_\Gs u)_4 +\bw \cdot \nabla_{\Gamma(t)} u,  \label{tanggrad2}
\end{align}
 where $(\nabla_\Gs u)_4 $ denotes the fourth entry of the vector $\nabla_\Gs u \in \Bbb{R}^4 $.
The spatial derivatives of the extended function can be written in terms of surface gradients (cf., e.g. (2.13) in \cite{Demlow06}):
\begin{equation}\label{gamma_grad}
\nabla_x u^e(\bx) =(\bI-d\bH)\nabla_{\Gamma(t)}u(\bp(\bx))= (\bI-d \bH) \bB \nabla_\Gs u(\bp(\bx))
=: \bB_1 \nabla_\Gs u(\bp(\bx)),
\end{equation}
for $\bx \in U$.
This implies $\nabla_x u^e(\bx)=\nabla_{\Gamma(t)}u(\bp(\bx))=\nabla_{\Gamma(t)}u(\bx)$ for $\bx \in \Gs$.
For the time derivative we obtain
\begin{multline}\label{t_grad}
u^e_t(\bx) = \frac{\partial}{\partial t} (u^e \circ\bp)(\bx) =\frac{\partial}{\partial t} u^e(x- d(x,t)\bn(x,t), t) \\   =  u^e_t(\bp(\bx))-(d_t\bn+d\bn_t)\cdot\nabla_x u^e(\bp(\bx))  =  u^e_t(\bp(\bx))-(d_t\bn+d\bn_t)\cdot\nabla_{\Gamma(t)} u(\bp(\bx)).
\end{multline}
The time derivative  $u^e_t$  on $\Gs$ can be represented in  terms of surface quantities, cf. \eqref{tanggrad2} :
\begin{equation*}
u^e_t=\dot{u}-\bw\cdot\nabla_x u^e=\dot{u}-\bw\cdot\nabla_{\Gamma(t)} u
=(1+V_\Gamma^2) (\nabla_\Gs u)_{4}\quad
\text{on}~\Gs.
\end{equation*}
Using this and \eqref{tanggrad1} in \eqref{t_grad} we obtain, for $\bx \in U$,
\begin{equation} \label{pp}
u^e_t(\bx) =(1+V_\Gamma^2) (\nabla_\Gs u(\bp(\bx)))_{4}-(d_t\bn+d\bn_t)\cdot\bB \nabla_\Gs u(\bp(\bx)) =: \bB_2 \cdot \nabla_\Gs u(\bp(\bx)).
\end{equation}
The matrices $\bB_1$, $\bB_2$ in \eqref{gamma_grad}, \eqref{pp} depend only on geometric quantities related to $\Gs$ ($d$, $d_t$, $\bH$, $V_\Gamma$, $\bn$, $\bn_t$). These quantities are uniformly bounded on $U$ due to the smoothness assumption on $\Gs$. Hence, from \eqref{gamma_grad} and the result in \eqref{normrelation} we obtain
\begin{equation}\label{H1equiv}
\|\nabla u^e\|_{L^2(U)}^2\le c\,\delta  \|\nabla_\Gs u\|_{L^2(\Gs)}^2\quad \text{for all}~u\in H^1(\Gs).
\end{equation}
We need a similar result for the $H^2$ volumetric and surface norms.    From \eqref{gamma_grad} we get $\frac{\partial u^e}{\partial x_i}(\bx)= \mathbf{b}_i \cdot \nabla_\Gs u(\bp(\bx))$, $x \in U$, $i=1,2,3$, with $\mathbf{b}_i$ the $i$-th row of the matrix $\bB_1$. For $z \in \{x_1,x_2,x_3,t\}$ we get
\[
  \frac{\partial^2 u^e}{\partial z \partial x_i}(\bx)= (\mathbf{b}_i)_z\cdot  \nabla_\Gs u(\bp(\bx)) + \mathbf{b}_i (  \nabla_\Gs \nabla_\Gs u)(\bp(\bx)) \frac{\partial}{\partial z} \bp(\bx), \quad \bx \in U.
\]
Due to the smoothness assumption on $\Gs$ the vectors $\mathbf{b}_i$, $(\mathbf{b}_i)_z$, $ \frac{\partial}{\partial z}\bp(\bx)$ have bounded $L^\infty$ norms on $U$ and application of \eqref{normrelation} yields
\begin{equation*} 
\left\| \frac{\partial^2 u^e}{\partial z \partial x_i} \right\|_{L^2(U)}^2
\le c  \delta \Big(
\sum_{|\mu|=2} \|\mathrm{D}^{\mu}_{\Gs}u\|_{L^2(\Gs)}^2+ \|\nabla_{\Gs}
u\|_{L^2(\Gs)}^2 \Big).
\end{equation*}
With similar arguments, using \eqref{pp}, one can derive the same bound for $\| \frac{\partial^2 u^e}{\partial z \partial t} \|_{L^2(U)}^2$. Hence we conclude
\begin{equation}\label{H2equiv}
\|u^e\|_{H^2(U)}^2  \leq c \delta\|u\|_{H^2(\Gs)}^2 \quad \text{for all}~~u \in H^2(\Gs).
\end{equation}

\subsection{Interpolation error bounds}\label{s_interp}
In this section, we introduce and analyze an interpolation operator. Recall that the local space-time triangulation $\mathcal{Q}_h^\Gs$ consists of cylindrical elements that are intersected by $\Gs$, cf. Fig.~\ref{Fig1}, and that the domain formed by these prisms is denoted by $Q^\Gs$.
For $K \in \mathcal{Q}_h^\Gs$, the nonempty intersections are denoted by $\Gs_K=K \cap \Gs$.
Let
\[
  I_h: C(Q^{\Gs}) \to V_{h}
\]
be the nodal interpolation operator.
Since the triangulation may vary from  time-slab to time-slab, the interpolant  is in general discontinuous between the time-slabs.

\emph{In the remainder we take} $\Delta t \sim h$. This assumption is made to avoid  anisotropic
 interpolation estimates, which would significantly complicate the analysis for the case of surface finite elements. 

We take a fixed neighborhood $U$ of $\Gs$ as in  \eqref{defU}, with $\delta >0$ sufficiently small such that the analysis presented in section~\ref{sec_ext} is valid ($\delta \leq \frac14 \kappa_{\max}^{-1}$).
 The mesh is assumed to be fine enough to resolve the geometry of $\Gs$ in the sense that  $\Q_h^{\Gs}\subset U$. We need one further technical assumption, which  holds if the space time manifold $\Gs$ is sufficiently resolved by the outer (local) triangulation  $\Q_h^{\Gs}$.
\begin{assumption} \label{assHans} \it
   For  $\Gs_K= K \cap \Gs$,  $K \in \Q_h^{\Gs}$, we assume that there is a local orthogonal coordinate system $y=(z,\theta)$, $z \in \Bbb{R}^3$, $\theta \in \Bbb{R}$, such that $ \Gs_K$ is the graph of a $C^1$ smooth scalar function, say $g_K$, i.e., $\Gs_K= \{\, (z,g_K(z))~|~z \in Z_K \subset \Bbb{R}^3\,\}$. The derivatives $\|\nabla g_K\|_{L^\infty( Z_K)}$ are assumed to be uniformly bounded with respect to $K \in \Q_h^{\Gs}$ and $h$ . Finally it is assumed that the graph $\Gs_K$ either coincides with one of the three-dimensional faces of $K$ or it subdivides $K$ into exactly two subsets (one above and one below the graph of $g_K$).
\end{assumption}
\smallskip

The next lemma is essential for our analysis of the interpolation operator. This result was presented in \cite{Hansbo02,Hansbo03}. We include a proof because the 4D case is not discussed in \cite{Hansbo02,Hansbo03}.

\begin{lemma} \label{basiclemma}
 There is a constant $c$, depending only on the shape regularity of the tetrahedral triangulations $\T_n$ and the smoothness of $\Gs$, such that
\begin{equation} \label{resl}
  \|v\|_{L^2(\Gs_K)}^2 \leq c  (h^{-1}\|v\|_{L^2(K)}^2+ h\|v\|_{H^1(K)}^2) \quad \text{for all}~~v \in H^1(K),~  K \in \Q_h^{\Gs}.
\end{equation}
\end{lemma}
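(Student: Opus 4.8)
The plan is to reduce the surface estimate to a local computation on a single prism $K$ and its intersection $\Gs_K$, exploiting Assumption~\ref{assHans} to represent $\Gs_K$ as a graph over a coordinate patch $Z_K$. First I would invoke the assumption to write $y=(z,\theta)$ with $\Gs_K=\{(z,g_K(z)) : z \in Z_K\}$ and $\|\nabla g_K\|_{L^\infty(Z_K)}$ uniformly bounded. Because the graph is Lipschitz with a uniform bound on $\nabla g_K$, the surface measure element $ds = \sqrt{1+|\nabla g_K|^2}\,dz$ is comparable to $dz$, so that $\|v\|_{L^2(\Gs_K)}^2 \le c \int_{Z_K} v(z,g_K(z))^2\,dz$ with $c$ depending only on the Lipschitz bound from Assumption~\ref{assHans}.

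The core of the argument is a one-dimensional trace inequality in the $\theta$-direction. For a fixed $z \in Z_K$, I would express the value $v(z,g_K(z))$ on the graph in terms of the values of $v$ on a segment $\{(z,\theta) : \theta \in J_z\}$ running transversally through $K$, using the fundamental theorem of calculus:
\begin{equation*}
v(z,g_K(z))^2 = v(z,\theta)^2 + \int_{\theta}^{g_K(z)} \frac{\partial}{\partial \tau}\, v(z,\tau)^2\, d\tau,
\end{equation*}
valid for $v \in H^1(K)$ by density of smooth functions. Averaging this identity over $\theta$ across the transversal extent of $K$ (of length $\sim h$, since $\Delta t \sim h$ makes the prism isotropic of diameter $\mathcal{O}(h)$), and applying Cauchy--Schwarz to the integral term, yields
\begin{equation*}
v(z,g_K(z))^2 \le \frac{c}{h}\int_{J_z} v(z,\theta)^2\, d\theta + c \int_{J_z} |v(z,\theta)|\, \Big|\frac{\partial v}{\partial \theta}(z,\theta)\Big|\, d\theta.
\end{equation*}
Integrating over $z \in Z_K$ and applying Cauchy--Schwarz once more to the second term converts the right-hand side into $c h^{-1}\|v\|_{L^2(K)}^2 + c\|v\|_{L^2(K)}\|\nabla v\|_{L^2(K)}$, and a Young inequality $ab \le \tfrac12 h^{-1}a^2 + \tfrac12 h b^2$ delivers exactly the claimed form $c(h^{-1}\|v\|_{L^2(K)}^2 + h\|v\|_{H^1(K)}^2)$. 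This whole step is the standard multiplicative trace inequality on $K$, and the graph representation guarantees that the trace is taken over $\Gs_K$ rather than a face.

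The main obstacle is ensuring the geometric constants stay uniform in $K$ and $h$ in the genuinely four-dimensional, anisotropic-prism setting, which is precisely why the $4$D case required separate treatment. Two points need care. First, the transversal direction $\theta$ must be chosen so that the segments $J_z$ stay inside $K$ and have length bounded below by $c\,h$; the final clause of Assumption~\ref{assHans}, that $\Gs_K$ either is a face of $K$ or splits $K$ into exactly two pieces, together with shape regularity of the $\T_n$ and $\Delta t \sim h$, is what makes this possible and prevents degenerate thin slivers. Second, the comparability constants in $ds \simeq dz$ and the covering of $\Gs_K$ by graph patches must be controlled solely through $\|\nabla g_K\|_{L^\infty}$ and the smoothness of $\Gs$, independently of the number of prisms $N \sim h^{-1}$; here the uniform bound on $\kappa_{\max}$ and the resolution hypothesis $\Q_h^\Gs \subset U$ are what prevent the constant from blowing up. Once these uniformities are secured on a reference configuration and transferred by an affine map of bounded distortion (shape regularity), the per-element estimate \eqref{resl} follows with a constant depending only on shape regularity and the smoothness of $\Gs$, as stated.
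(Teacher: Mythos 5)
Your graph representation and the comparability $ds\simeq dz$ are fine, but the core averaging step has a genuine gap: it needs every transversal fiber $J_z=\{\theta:\,(z,\theta)\in K\}$ through a point of $\Gs_K$ to have length bounded below by $c\,h$, and this is false in general. Assumption~\ref{assHans} does not ``prevent degenerate thin slivers'' as you claim: it only requires that $\Gs_K$ subdivide $K$ into exactly two subsets, and these may be of arbitrarily disparate size --- the generic situation in which $\Gs$ passes close to a vertex or edge of the prism, or runs nearly parallel and close to one of its faces. In that case, for $z$ near the boundary of $Z_K$ the fiber through $(z,g_K(z))$ inside the convex prism $K$ is arbitrarily short compared to $h$, so your averaging produces the factor $|J_z|^{-1}$ rather than $c\,h^{-1}$, and the constant in \eqref{resl} would depend on where the cut sits inside the element. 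Shape regularity of $\T_n$ and $\Delta t\sim h$ control the geometry of $K$ itself, not the location of the cut within it, and the smoothness of $\Gs$ (bounded $\kappa_{\max}$, $\Q_h^{\Gs}\subset U$) does not help either: even a perfectly flat piece of $\Gs$ with $\nabla g_K\equiv 0$ can slice off an arbitrarily thin corner.

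The paper circumvents exactly this difficulty. It first establishes the scaled multiplicative trace inequality on the \emph{whole} boundary, $\|v\|_{L^2(\partial K)}^2\le c\,(h^{-1}\|v\|_{L^2(K)}^2+h\|v\|_{H^1(K)}^2)$, by a reference-simplex argument, and then applies the divergence theorem on the subdomain $K_i$ with $\Gs_K\subset\partial K_i$ to the field $(0,\dots,0,v^2)^T$ in the $(z,\theta)$ coordinates:
\begin{equation*}
\int_{\Gs_K} n_4\, v^2 \, ds \;=\; 2\int_{K_i} v\,\frac{\partial v}{\partial\theta}\, dy \;-\; \int_{\partial K_i\setminus\Gs_K} n_4\, v^2\, ds,
\end{equation*}
which is precisely the integrated form of your fundamental-theorem identity --- but instead of averaging over fibers inside $K$, it pushes the boundary contribution onto $\partial K_i\setminus\Gs_K\subset\partial K$, where the trace inequality \eqref{traceomega} applies no matter how small $K_i$ is; the uniform bound on $\|\nabla g_K\|_{L^\infty(Z_K)}$ is used only to guarantee $1\le n_4^{-1}\le c$ on $\Gs_K$, i.e.\ to pass from $n_4\,ds$ back to $ds$. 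Your argument becomes correct if, instead of averaging over (possibly short) fibers, you integrate each fiber all the way across to $\partial K$ and control the resulting boundary term by the trace inequality on the full shape-regular prism --- at which point it is the paper's proof in integrated disguise.
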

\begin{proof} We recall the following trace result (e.g. Thm. 1.1.6 in \cite{BrennerScott}) for a reference simplex $\widehat{K}$:
\begin{equation*} 
 \|v\|_{L^2(\partial \widehat{K})}^2 \leq c \|v\|_{L^2(\widehat{K})} \|v\|_{H^1(\widehat{K})} \quad \text{for all} ~v \in H^1(\widehat{K}).
\end{equation*}
The Cauchy inequality and the standard scaling argument yield for $K \in \Q_h^{\Gs}$
\begin{equation} \label{traceomega}
  \|v\|_{L^2(\partial K)}^2 \leq c  (h^{-1}\|v\|_{L^2(K)}^2+ h\|v\|_{H^1(K)}^2) \quad \text{for all}~~v \in H^1(K),
\end{equation}
with a constant $c$ that depends only on the shape regularity of $K$.
 Take $K \in\Q_h^{\Gs}$ and let
$\Gs_K= \{ \, (z,g(z))~|~z \in Z_K \subset \Bbb{R}^3\, \}$ be as in Assumption~\ref{assHans}. If $\Gs_K$ coincides with one of the three-dimensional faces of $K$ then \eqref{resl} follows from \eqref{traceomega}. We consider the situation that the graph $\Gs_K$ divides $K$ into two nonempty subdomains $K_i$, $i=1,2$. Take $i$ such that $\Gs_K \subset \partial K_i$. Let $\bn= (n_1, \ldots,n_4)^T$ be the unit outward pointing normal on $\partial K_i$. For $ v \in H^1(K)$ the following holds, where ${\rm div}_y$ denotes the divergence operator in the $y=(z,\theta)$-coordinate system (cf. Assumption~\ref{assHans}),
\begin{align*}
 2 \int_{K_i} v \frac{\partial v}{\partial \theta} \, dy & = \int_{K_i} {\rm div}_y \begin{pmatrix} 0  \\  v^2 \end{pmatrix}
\, dy = \int_{\partial K_i}
\bn \cdot \begin{pmatrix} 0  \\ v^2 \end{pmatrix} \, ds = \int_{\partial K_i} n_{4} v^2 \, ds \\
& = \int_{\Gs_K}n_{4} v^2 \, ds + \int_{\partial K_i \setminus \Gs_K } n_{4} v^2 \, ds.
\end{align*}
On $\Gs_K$ the normal $\bn$ has direction $(- \nabla_z g(z), 1)^T$ and thus $n_{4}(y)=(\|\nabla_z g(z)\|^2 +1)^{-\frac12}$ holds. From Assumption~\ref{assHans}
it follows that there is a generic constant $c$ such that $1 \leq n_{4}(z)^{-1} \leq c$ holds. Using this we obtain
\begin{align*}
 & \int_{\Gs_K} v^2 \, ds  \leq c \int_{\Gs_K} n_{4} v^2 \, ds \leq c \|v\|_{L^2(K_i)} \|v\|_{H^1(K_i)} + c \int_{\partial K_i \setminus \Gs_K}  v^2 \, ds \\
& \leq c \|v\|_{L^2(K)} \|v\|_{H^1(K)}
+ c \int_{\partial K}  v^2 \, ds \\
& \leq c (h^{-1}\|v\|_{L^2(K)}^2+ h \|v\|_{H^1(K)}^2)
+ c \int_{\partial K}  v^2 \, ds \leq c  (h^{-1}\|v\|_{L^2(K)}^2+ h \|v\|_{H^1(K)}^2),
\end{align*}
where in the last inequality we used \eqref{traceomega}.
\end{proof}
\ \\[1ex]
We prove  the following approximation result:
\begin{theorem} \label{thminter}
 For sufficiently smooth $u$ defined on  $\Gs$ we have:
\begin{equation} \label{reskk1}
\begin{aligned}
 \sum_{n=1}^N\|u- I_hu^e\|_{H^k(\Gs^n)}^2 &\leq c h^{2(2-k)} \|u\|_{H^2(\Gs)}^2,  \quad k=0,1,\\
 \|u- (I_hu^e)_{-}\|_{t^n} &\leq c h^{2} \|u\|_{H^2(\Gamma(t^n))},~~ n=1,\dots,N,\\
 \|u- (I_hu^e)_{+}\|_{t^n}&\leq c h^{2} \|u\|_{H^2(\Gamma(t^n))},~~ n=0,\dots,N-1.
\end{aligned}
\end{equation}
The constants $c$ are independent of $u, h, N$.
\end{theorem}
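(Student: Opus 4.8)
The plan is to reduce each of the surface norms in \eqref{reskk1} to volumetric norms of the interpolation error $u^e-I_hu^e$ over the prisms of $\Q_h^{\Gs}$ (respectively over the intersected tetrahedra of $\T_n$), to apply standard isotropic finite element interpolation estimates on these cells, and finally to convert the resulting volumetric bounds back into surface norms of $u$. Throughout I use that $u=u^e$ on $\Gs$, so that $u-I_hu^e=u^e-I_hu^e$ there, and that $\Delta t\sim h$ makes every prism $K=T\times I_n$ isotropic of diameter $\sim h$; since the local element space reproduces all of $P_1(\R^{3+1})$, the usual Bramble--Hilbert argument yields $\|u^e-I_hu^e\|_{H^m(K)}\le c\,h^{2-m}|u^e|_{H^2(K)}$ for $m=0,1$, together with the interpolation stability bound $|I_hu^e|_{H^2(K)}\le c\,|u^e|_{H^2(K)}$ (the only nonzero second derivatives of $I_hu^e$ are the mixed space--time ones $\partial_{tx_i}I_hu^e$, which are controlled by $\partial_t\nabla_x u^e$).

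For the first line of \eqref{reskk1} I would apply Lemma~\ref{basiclemma} cellwise. For $k=0$, taking $v=u^e-I_hu^e$ on each $K\in\Q_h^{\Gs}$ and summing $\|v\|_{L^2(\Gs_K)}^2\le c(h^{-1}\|v\|_{L^2(K)}^2+h\|v\|_{H^1(K)}^2)$ over the prisms contained in one time slab gives $\|u-I_hu^e\|_{L^2(\Gs^n)}^2$, and summing further over $n$ runs over all $K\in\Q_h^{\Gs}$. Inserting the interpolation estimates produces $\sum_n\|u-I_hu^e\|_{L^2(\Gs^n)}^2\le c\,(h^{-1}h^4+h\,h^2)\,|u^e|_{H^2(Q^{\Gs})}^2=c\,h^3\,|u^e|_{H^2(Q^{\Gs})}^2$. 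For $k=1$ I apply the same lemma to each component of $\nabla(u^e-I_hu^e)$, bound $\nabla_\Gs(u-I_hu^e)$ on $\Gs_K$ by the full gradient (the surface gradient is a tangential projection), use the interpolation and stability bounds to get $\|\partial_i(u^e-I_hu^e)\|_{H^1(K)}\le c\,|u^e|_{H^2(K)}$, and obtain $\sum_n\|\nabla_\Gs(u-I_hu^e)\|_{L^2(\Gs^n)}^2\le c\,h\,|u^e|_{H^2(Q^{\Gs})}^2$.

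The decisive step is then to replace $|u^e|_{H^2(Q^{\Gs})}^2$ by a surface norm with the correct power of $h$. The estimate \eqref{H2equiv} over the full $\delta$-neighborhood $U$ would only give a factor $\delta=\mathcal O(1)$ and hence the wrong powers. Instead I would repeat the derivation in \eqref{normrelation} but restrict the normal coordinate $r$ to the tube actually occupied by $\Q_h^{\Gs}$: since each prism meets $\Gs$ and has diameter $\lesssim h$, one has $Q^{\Gs}\subset\{\,|d|\le c\,h\,\}$, so the $r$-integral runs over $|r|\lesssim h$ and yields the sharpened equivalence $|u^e|_{H^2(Q^{\Gs})}^2\le c\,h\,\|u\|_{H^2(\Gs)}^2$. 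Combined with the two bounds above this gives exactly $c\,h^4\|u\|_{H^2(\Gs)}^2$ for $k=0$ and $c\,h^2\|u\|_{H^2(\Gs)}^2$ for $k=1$, which is the first assertion.

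For the two cross-section bounds I first note that the time-trace $(I_hu^e)_{-}(\cdot,t_n)$ is the \emph{spatial} nodal interpolant of $u^e(\cdot,t_n)$ on $\T_n$ (and $(I_hu^e)_{+}(\cdot,t_n)$ the spatial interpolant on $\T_{n+1}$), because the nodal degrees of freedom of $V_h$ live at the two time levels of each slab. On each spatial tetrahedron $T\in\T_n$ cut by $\Gamma(t_n)$ I would apply the three-dimensional analogue of Lemma~\ref{basiclemma} to $v=(u^e-I_hu^e)(\cdot,t_n)$, insert the spatial interpolation estimates $\|v\|_{H^m(T)}\le c\,h^{2-m}|u^e(\cdot,t_n)|_{H^2(T)}$, sum over the intersected $T$, and finally use the spatial (thickness-$h$) version of \eqref{H2equiv} on $\Gamma(t_n)$ to replace $|u^e(\cdot,t_n)|_{H^2}^2$ over the spatial tube by $c\,h\,\|u\|_{H^2(\Gamma(t_n))}^2$; this yields $c\,(h^{-1}h^4+h\,h^2)\,h\,\|u\|_{H^2(\Gamma(t_n))}^2=c\,h^4\|u\|_{H^2(\Gamma(t_n))}^2$, i.e. the claimed $O(h^2)$ bound after taking square roots. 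The main obstacle, and the only genuinely nonroutine point, is precisely the sharpening of the normal-direction integration from thickness $\delta$ to thickness $O(h)$; everything else is bookkeeping of powers of $h$ combined with the already-established trace lemma, interpolation theory, and the extension estimates of section~\ref{sec_ext}.
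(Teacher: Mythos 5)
Your proposal is correct and follows essentially the same route as the paper's proof: Lemma~\ref{basiclemma} applied elementwise, standard isotropic interpolation estimates on the prisms (using $\Delta t\sim h$), and the extension bounds of section~\ref{sec_ext} with normal thickness of order $h$ --- the ``sharpened tube'' step you single out as the nonroutine point is achieved in the paper simply by taking $\delta\simeq h$ in \eqref{H2equiv}, which is the same maneuver. The cross-section estimates are likewise obtained there exactly as you describe, via the three-dimensional version of Lemma~\ref{basiclemma}, spatial interpolation on the cut tetrahedra, and the fact that $u^e$ is the extension along \emph{spatial} normals.
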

\begin{proof} {Since $\Gs$ is a smooth three-dimensional manifold, the embedding  $H^2(\Gs)\hookrightarrow C(\Gs)$
holds. Hence $u\in C(\Gs)$ implies $ u^e\in C(U)$, and the nodal interpolant $I_h u^e$ is well defined.}
Define $v_h=(I_h u^e)|_\Gs \in W_h$. Using Lemma~\ref{basiclemma}, we obtain for $K\in \Q_h^{\Gs}$:
\[
   \|u- v_h\|_{L^2(\Gs_K)}^2 \leq c (h^{-1} \|u^e- I_hu^e\|_{L^2(K)}^2 +  h \|u^e- I_hu^e\|_{H^1(K)}^2).
\]
Standard  interpolation error bounds for $I_h$ and summing over all $ K\in \Q_h^{\Gs}$ yields
\[
   \|u- v_h\|_{L^2(\Gs)}^2 \leq c h^3 \|u^e\|_{H^2(\Q_h^{\Gs})}^2.
\]
We use $\Q_h^{\Gs}\subset U$ and \eqref{H2equiv} to infer
\[
   \|u- v_h\|_{L^2(\Gs)}^2 \leq c \delta h^3 \|u\|_{H^2(\Gs)}^2.
\]
Since we may assume $\delta\simeq h$, the result in \eqref{reskk1} follows for $k=0$. The same technique is applied to show the result for $k=1$:
\begin{align*}
  &\|\nabla_{\Gs}(u- v_h)\|_{L^2(\Gs_K)}^2  \leq c \|\nabla(u^e - I_hu^e) \|_{L^2(\Gs_K)}^2 \\
 & \leq c (h^{-1} \|\nabla(u^e- I_hu^e)\|_{L^2(K)}^2 +  h |\nabla(u^e- I_h u^e)|_{H^1(K)}^2)
  \leq c h \|u^e\|_{H^2(K)}^2.
\end{align*}
Summing over all $ K\in \Q_h^{\Gs}$ and using \eqref{H2equiv}, with  $\delta\simeq h$, then yields
the first estimate in \eqref{reskk1}. The second and third estimates follow by similar arguments, using that $u^e$ is the extension in normal \textit{spatial} direction and combining this with the \emph{three}-dimensional version of Lemma~\ref{basiclemma} and standard interpolation error bounds for $I_h u^e_{|T}$, with $T$ a tetrahedron such that $K=T\times I_n \in \Q_h^{\Gs}$.
\end{proof}

\subsection{Discretization error bound}
The next theorem is the first main result of this paper. It shows optimal convergence in the $\enorm{\cdot}_h$ norm.

\begin{theorem} \label{thmmain1}
Let $u \in \Wo$ be the solution of \eqref{weakformu} and assume $u \in  H^2(\Gs)$, $u\in H^2(\Gamma(t))$
for all $t\in[0,T]$. 
Let $u_h \in W_h$ be the solution of the discrete problem \eqref{brokenweakformu_h1} with a stabilization parameter $\sigma$ as in Theorem~\ref{stabadd}.  The following error bound holds:
\[
 \enorm{u-u_h}_h \leq c h (\|u\|_{H^2(\Gs)}+\sup_{t\in[0,T]}\|u\|_{H^2(\Gamma(t))}).
\]
\end{theorem}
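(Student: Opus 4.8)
The plan is to follow the Cea-type structure announced in the introduction, combining the Galerkin orthogonality \eqref{Galerkin}, the discrete inf-sup stability \eqref{infsuph} (which by Theorem~\ref{stabadd} holds on $W_h$), the continuity of Lemma~\ref{Lem_cont}, and the interpolation bounds of Theorem~\ref{thminter}. First I would introduce the interpolant $u_I := (I_h u^e)|_{\Gs} \in W_h$, with $u^e$ the normal extension of section~\ref{sec_ext}, and split the error as $u-u_h = (u-u_I) + (u_I-u_h)$. Setting $\phi_h := u_I - u_h \in W_h$ and using the triangle inequality for $\enorm{\cdot}_h$, it then suffices to bound $\enorm{u-u_I}_h$ and $\enorm{\phi_h}_h$ separately.

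The interpolation part $\enorm{u-u_I}_h$ is controlled directly by Theorem~\ref{thminter}. The $\|\cdot\|_H$ contribution is bounded by the $k=0,1$ estimates in \eqref{reskk1}, the $k=1$ term of order $h$ being dominant; the one-sided trace term $\max_n\|(u-u_I)_-^n\|_{t^n}$ is of order $h^2$ by the second estimate in \eqref{reskk1}. For the jump contribution $\sum_{n}\|[u-u_I]^{n-1}\|_{t_{n-1}}^2$ I would use the triangle inequality together with the two one-sided estimates in \eqref{reskk1}, each of order $h^2$; since the $N \sim h^{-1}$ squared terms sum to order $h^3$, the resulting contribution is $O(h^{3/2}) \le ch$. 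Altogether $\enorm{u-u_I}_h \le ch(\|u\|_{H^2(\Gs)} + \sup_{t}\|u\|_{H^2(\Gamma(t))})$.

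For the discrete part I would apply the inf-sup estimate \eqref{infsuph} on $W_h$ to $\phi_h$, giving $c_s\enorm{\phi_h}_h \le \sup_{v_h\in W_h}(\la\dot\phi_h,v_h\ra_b + a_\sigma(\phi_h,v_h) + d(\phi_h,v_h))/\enorm{v_h}_h$. By Galerkin orthogonality \eqref{Galerkin} the numerator equals $-(\la\dot{(u-u_I)},v_h\ra_b + a_\sigma(u-u_I,v_h) + d(u-u_I,v_h))$, and the continuity bound \eqref{cont}, applied with $e=u-u_I$, estimates this by $c\enorm{v_h}_h(\|u-u_I\|_{W^b} + \sum_{n=0}^{N-1}\|[u-u_I]^n\|_{t^n})$. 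The jump sum here is linear in the one-sided errors, so summing $N\sim h^{-1}$ terms of order $h^2$ again yields $O(h)$. Hence $\enorm{\phi_h}_h \le c(\|u-u_I\|_{W^b} + ch)$, and combining with the interpolation estimate and the triangle inequality gives the assertion, provided $\|u-u_I\|_{W^b}$ is $O(h)$.

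The main obstacle is precisely this last requirement: in contrast to the quantities controlled by Theorem~\ref{thminter}, the norm $\|u-u_I\|_{W^b}^2 = \|u-u_I\|_H^2 + \sum_n\|\dot{(u-u_I)}_n\|_{H_n'}^2$ involves the material derivative in the dual norm, which is not directly supplied by the interpolation estimates. I would control it by first passing to the $L^2$ norm via $\|\dot w_n\|_{H_n'} = \sup_{v\in H_n}(\dot w,v)_0/\|v\|_H \le \|\dot w\|_{L^2(\Gs^n)}$, and then using the identities \eqref{tanggrad1}--\eqref{tanggrad2}, valid for any sufficiently smooth $w$ on $\Gs$, to write $\dot w = (1+V_\Gamma^2)(\nabla_\Gs w)_4 + \bw\cdot\nabla_{\Gamma(t)}w$. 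Because $V_\Gamma$, $\bw$ and $\bB$ are uniformly bounded on $\Gs$, this yields the pointwise estimate $|\dot w| \le c|\nabla_\Gs w|$, so that $\|\dot{(u-u_I)}_n\|_{H_n'} \le c\|\nabla_\Gs(u-u_I)\|_{L^2(\Gs^n)}$. Summing over $n$ reduces this material-derivative term to the $k=1$ interpolation estimate in \eqref{reskk1}, which is $O(h)$, whence $\|u-u_I\|_{W^b} \le ch(\|u\|_{H^2(\Gs)} + \sup_{t}\|u\|_{H^2(\Gamma(t))})$ as needed.
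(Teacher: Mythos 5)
Your proposal is correct and takes essentially the same route as the paper's proof: the standard inf-sup/Galerkin-orthogonality/continuity (C\'ea-type) argument with the interpolant $(I_h u^e)|_{\Gs}$, the jump sums controlled by the one-sided $O(h^2)$ trace bounds in Theorem~\ref{thminter} summed over $N\sim h^{-1}$ instances, and the dual norm of the material derivative reduced via $\|\dot w\|_{H_n'}\le \|\dot w\|_{L^2(\Gs^n)}\le c\,\|w\|_{H^1(\Gs^n)}$. Your explicit appeal to \eqref{tanggrad1}--\eqref{tanggrad2} merely spells out the step the paper leaves implicit in the chain $\sum_{n}\|(\dot e_I)_n\|_{L^2(\Gs^n)}^2+\|e_I\|_H^2\le c\sum_{n}\|(e_I)_n\|_{H^1(\Gs^n)}^2$ of \eqref{estu1}.
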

\begin{proof}
For the solution $u \in  H^2(\Gs)$ let $e_I= u-(I_h u^e)|_{\Gs}$ denote the interpolation error and $e=u-u_h$ the discretization error. The inf-sup stability result in \eqref{infsuph} with $W^b$ replaced by $W_h$ and the continuity result \eqref{cont}
imply in a standard way, cf. e.g. \cite{Ern04}:
\[
\enorm{e}_h\le \enorm{e_I}_h+ c (\|e_I\|_{W^b}+\sum_{n=0}^{N-1}\|[e_I]^n\|_{t^n}).
\]
Using the first interpolation  bound in Theorem~\ref{thminter} and $H_n \subset L^2(\Gs^n)$ we get
\begin{equation} \label{estu1}
\begin{split}
 \|e_I\|_{W^b}^2 &=  \sum_{n=1}^N\|(\dot{e}_I)_n\|_{H'_n}^2 + \|e_I\|_H^2 \le  \sum_{n=1}^N \|(\dot{e}_I)_n\|_{L^2(\Gs^n)}^2 + \|e_I\|_H^2  \\ & \leq c \sum_{n=1}^N\|(e_I)_n\|_{H^1(\Gs^n)}^2
  \leq c h^2 \|u\|_{H^2(\Gs)}^2.
  \end{split}
\end{equation}
Furthermore, applying the result in the second and the third interpolation  bounds in Theorem~\ref{thminter} we obtain
\[
\begin{split}
\sum_{n=0}^{N-1}\|[e_I]^n\|_{t^n}&\le \|(e_I)_{+}\|_{t^0}+ \sum_{n=1}^{N-1}(\|(e_I)^n_{-}\|_{t^n}+\|(e_I)^n_{+}\|_{t^n})\\ &\le
c\,h^2\,(\Delta t)^{-1}\sup_{n=0,\dots,N-1}\|u\|_{H^2(\Gamma(t^n))}\le c\,h\,\sup_{t\in[0,T]}\|u\|_{H^2(\Gamma(t))}.
  \end{split}
\]
 This together with \eqref{estu1} proves the theorem.
 \end{proof}

\section{Second  order convergence} \label{sec7}
The aim of this section is to derive an error estimate $\|u-u_h\|_\ast \leq c h^2$ for $\Delta t \sim h$ in a suitable norm with the help of a duality argument.
To formulate an adjoint problem, we define a ``reverse time'' in the space-time manifold $\Gs$.
Let $X(t)$ be the Lagrangian particle path given by $\bw$ and initial manifold $\Gamma_0$:
\[
  \frac{d X}{dt}(t)= \bw (X(t),t), \quad t \in [0,T],~~X(0) \in \Gamma_0.
\]
Hence, $\Gamma(t)=\{\, X(t)~|~X(0) \in \Gamma_0\,\}$. Define, for $t \in [0,T]$:
\[
  \widetilde X(t):= X(T-t),~~\widetilde \Gamma(t):=\Gamma(T-t),~~\widetilde \bw(x,t):=-\bw(x,T-t), \quad x \in \Omega.
\]
From
\[
  \frac{d\widetilde X}{dt}(t)= - \frac{dX}{dt}(T-t)= -\bw (X(T-t),T-t)= \widetilde \bw (\widetilde X(t),t),
\]
it follows that
$\widetilde X(t)$ describes the particle paths corresponding to the flow $\widetilde \bw$ with $\widetilde X(0) = X(T) \in \Gamma(T)$.
Hence, $\widetilde \Gamma(t)= \{\, \widetilde X(t)~|~ \widetilde X(0) \in \Gamma(T)=\widetilde \Gamma_0\, \}$. We introduce the material derivative with respect to the flow field $\widetilde \bw$:
\[
 \check{v}(x,t):= \frac{\partial v}{\partial t}(x,t)+ \widetilde \bw (x,t) \cdot \nabla v(x,t), \quad (x,t) \in \Gs.
\]
For a given $f^\ast \in L^2(\Gs)$ we consider the following \emph{dual problem}
\begin{equation} \label{dual}
 \begin{split}
  \check{v} - \nu_d \Delta_{\widetilde \Gamma} v + \sigma \int_{\widetilde\Gamma(t)} v\, ds & = f^\ast \quad \text{on}~~\widetilde{\Gamma}(t), ~t \in [0,T], \\
    v(\cdot,0) &= 0 \quad \text{on}~~\widetilde \Gamma_0=\Gamma(T).
 \end{split}
\end{equation}
The problem \eqref{dual} is of integro-differential type. From the analysis of \cite{ORXsinum} it follows that a weak formulation of this problem as in \eqref{weakformu}, with the bilinear form
$a(\cdot,\cdot)$ replaced by $a_\sigma(\cdot,\cdot)$, has a unique solution $v \in \Wo$.
As is usual in the Aubin-Nitsche duality argument, we need a suitable regularity result for the dual problem \eqref{dual}. In the literature we did not find the regularity result that we need. Therefore we derived the result given in the following theorem. A proof is given in the next section. A corollary of this theorem gives the regularity result for the dual problem that we need.

\begin{theorem} \label{Th_regul}
Consider the parabolic surface problem
\begin{equation} \label{orig_A}
 \begin{split}
  \dot{u} - \nu_d \Delta_{\Gamma} u & = f \quad \text{on}~~\Gamma(t),~t \in (0,T], \\
    u(\cdot,0) &= 0 \quad \text{on}~~\Gamma_0,
 \end{split}
\end{equation}
Let $\Gs$ be sufficiently smooth (precise assumptions are given in the proof) and $f\in L^2(\Gs)$.
 Then the unique weak solution $u\in \Wo$ of \eqref{orig_A}
satisfies $u \in H^1(\Gs)$,  $u\in H^2(\Gamma(t))$ for almost all  $t\in [0,T]$, and
\begin{equation} \label{fg}
\|u\|_{H^1(\Gs)}^2 +\int_{0}^T\|u\|^2_{H^2(\Gamma(t))}\, d t\le c  \|f\|^2_0,
\end{equation}
with a constant $c$ independent of $f$.
If in addition $f \in H^1(\Gs)$ and $f|_{\Gamma_0}=0$, then $u \in H^2(\Gs)$ and
\begin{equation} \label{H2reg}
\sup_{t\in[0,T]}\|u\|_{H^2(\Gamma(t))}+ \|u\|_{H^2(\Gs)}\leq c \|f\|_{H^1(\Gs)},
\end{equation}
with a constant $c$ independent of $f$.
\end{theorem}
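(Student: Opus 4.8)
The plan is to prove the two estimates by a standard parabolic bootstrap, treating the evolving-surface equation \eqref{orig_A} as a family of elliptic (Laplace--Beltrami) problems coupled through the material derivative. For \eqref{fg} I would start from the basic energy identity obtained by testing with $u$ (exactly as in Remark~\ref{remGronwall}), which together with Gronwall and $u(\cdot,0)=0$ controls $\sup_{t}\|u\|_{L^2(\Gamma(t))}$ and $\int_0^T\|\nablaG u\|_{L^2(\Gamma(t))}^2\,dt$ by $\|f\|_0^2$. The key additional step is to test \eqref{orig_A} with the material derivative $\dot u$. Here I need the transport formula for the Dirichlet form,
\begin{equation*}
\frac{d}{dt}\int_{\Gamma(t)}|\nablaG u|^2\,ds=\int_{\Gamma(t)}2\,\nablaG u\cdot\nablaG\dot u+\nablaG u\cdot\mathbf{B}(\bw)\,\nablaG u\,ds,
\end{equation*}
with $\mathbf{B}(\bw)$ a bounded matrix built from $\DivG\bw$ and $\nablaG\bw$. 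Using $-\nu_d(\Delta_{\Gamma} u,\dot u)_{\Gamma(t)}=\nu_d(\nablaG u,\nablaG\dot u)_{\Gamma(t)}$ (integration by parts on the closed $\Gamma(t)$) and absorbing the $\mathbf{B}(\bw)$ term, I obtain $\tfrac12\|\dot u\|_{L^2(\Gamma(t))}^2+\tfrac{\nu_d}{2}\tfrac{d}{dt}\|\nablaG u\|_{L^2(\Gamma(t))}^2\le \tfrac12\|f\|_{L^2(\Gamma(t))}^2+c\|\nablaG u\|_{L^2(\Gamma(t))}^2$. After integration in time and insertion of the previous bound this yields $\int_0^T\|\dot u\|_{L^2(\Gamma(t))}^2\,dt+\sup_t\|\nablaG u\|_{L^2(\Gamma(t))}^2\le c\|f\|_0^2$, which by the relations \eqref{tanggrad1}--\eqref{tanggrad2} between $\dot u$, $\nablaG u$ and $\nabla_\Gs u$ is precisely the $H^1(\Gs)$ part of \eqref{fg}.

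To finish \eqref{fg} I would read \eqref{orig_A} at almost every fixed $t$ as the elliptic equation $-\nu_d\Delta_{\Gamma} u=f-\dot u$ on the closed smooth surface $\Gamma(t)$. Elliptic regularity on $\Gamma(t)$, with a constant uniform in $t$ by the uniform smoothness of $\Gs$, gives $\|u\|_{H^2(\Gamma(t))}\le c(\|f\|_{L^2(\Gamma(t))}+\|\dot u\|_{L^2(\Gamma(t))}+\|u\|_{L^2(\Gamma(t))})$, the last term accounting for the kernel (constants) of $\Delta_{\Gamma}$. Squaring, integrating in $t$, and inserting the bounds already obtained yields $\int_0^T\|u\|_{H^2(\Gamma(t))}^2\,dt\le c\|f\|_0^2$.

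For \eqref{H2reg} the idea is to differentiate the equation once in the material direction. Setting $w=\dot u$ and applying $\tfrac{d}{dt}$ to \eqref{orig_A}, I use a commutator identity $\tfrac{d}{dt}\Delta_{\Gamma} u=\Delta_{\Gamma}\dot u+\mathcal C(u)$, where $\mathcal C(u)$ is an operator of order at most two with coefficients depending on $\bw$ and the geometry of $\Gs$. This produces a parabolic problem of the same type, $\dot w-\nu_d\Delta_{\Gamma} w=g$ with $g:=\dot f+\nu_d\mathcal C(u)$. The compatibility assumption $f|_{\Gamma_0}=0$ is exactly what forces $w(\cdot,0)=\dot u(\cdot,0)=f|_{\Gamma_0}=0$, so the homogeneous initial data needed for the first part is available. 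Since $f\in H^1(\Gs)$ gives $\dot f\in L^2(\Gs)$ and $\mathcal C(u)$ is controlled by the second spatial derivatives of $u$, the already-proven estimate \eqref{fg} shows $g\in L^2(\Gs)$ with $\|g\|_0\le c\|f\|_{H^1(\Gs)}$. Applying the first part to $w$ then bounds $\sup_t\|w\|_{L^2(\Gamma(t))}$, $\sup_t\|\nablaG w\|_{L^2(\Gamma(t))}$ and $\int_0^T\|\dot w\|_{L^2(\Gamma(t))}^2\,dt$ by $\|f\|_{H^1(\Gs)}^2$. Expressing every second-order surface derivative $\mathrm{D}^{\mu}_{\Gs}u$, $|\mu|=2$, through $\ddot u=\dot w$, the mixed derivative $\nablaG\dot u=\nablaG w$ and the spatial Hessian $\nablaG\nablaG u$ (the last controlled by the $\int_0^T\|u\|_{H^2(\Gamma(t))}^2\,dt$ bound), modulo lower-order geometric terms, gives $\|u\|_{H^2(\Gs)}\le c\|f\|_{H^1(\Gs)}$. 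Finally, reinserting the now uniform-in-$t$ bound on $\dot u=w$ into $\|u\|_{H^2(\Gamma(t))}\le c(\|f\|_{L^2(\Gamma(t))}+\|\dot u\|_{L^2(\Gamma(t))}+\|u\|_{L^2(\Gamma(t))})$ together with $\sup_t\|f\|_{L^2(\Gamma(t))}\le c\|f\|_{H^1(\Gs)}$ produces the $\sup_t\|u\|_{H^2(\Gamma(t))}$ bound.

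I expect the main obstacle to be the rigorous derivation and control of the two commutator formulas --- the transport formula for the Dirichlet form and, above all, the identity for $\tfrac{d}{dt}\Delta_{\Gamma} u$ --- since these generate all the lower-order terms that must be absorbed and they require the full smoothness of $\Gs$ and of $\bw$. A second, more technical point is that the formal testing with $\dot u$ and the differentiation step $w=\dot u$ are \emph{a priori} justified only for sufficiently regular solutions; I would make them rigorous through a Galerkin (or material-direction difference-quotient) approximation, deriving the estimates at the discrete level and passing to the limit. The uniform-in-$t$ elliptic regularity on the moving family $\Gamma(t)$ and the precise bookkeeping identifying the $H^2(\Gs)$ norm with the collection $\{\ddot u,\nablaG\dot u,\nablaG\nablaG u\}$ are then comparatively routine consequences of the smoothness assumptions.
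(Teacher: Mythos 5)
Your proposal is correct and follows essentially the same route as the paper's own proof: energy estimates obtained by testing with $u$ and then with $\dot u$ (via the transport identity for the Dirichlet form), pointwise-in-time elliptic regularity for the Laplace--Beltrami operator on each $\Gamma(t)$, and, for \eqref{H2reg}, material differentiation of the equation with the commutator $R(\bw,u)$ (your $\mathcal{C}(u)$), the compatibility condition $f|_{\Gamma_0}=0$ yielding $\dot u(\cdot,0)=0$, and an application of the first part to $w=\dot u$ followed by reassembly of the $H^2(\Gs)$ norm from $\ddot u$, $\nablaG \dot u$ and the spatial Hessian. The rigor issue you flag at the end is resolved in the paper exactly as you suggest, by a Galerkin scheme built from eigenfunctions of the Laplace--Beltrami operator on $\Gamma_0$ transported along the Lagrangian flow (so that $\dot\phi_j=0$, which legitimizes differentiating the Galerkin system and keeps $\dot u_N$ in the trial space), supplemented by an $H^1$-stability lemma for the $L^2$-projection onto the Galerkin spaces and an integration by parts that removes the second derivatives of $u_N$ from the $R(\bw,u_N)$ term at the discrete level.
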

 \medskip

\begin{corollary} \label{Cl_regul}Let $\Gs$ be sufficiently smooth (as in Theorem~\ref{Th_regul}).
Assume $f^\ast \in H^1_0(\Gs)$. Then the unique weak solution $v\in W_0$
of \eqref{dual} satisfies $v \in H^2(\Gs)$  and
\begin{equation}\label{H2est}
\sup_{t\in[0,T]}\|v\|_{H^2(\Gamma(t))}+ \|v\|_{H^2(\Gs)}\leq c \|f^\ast\|_{H^1(\Gs)},
\end{equation}
with a constant $c$ independent of $f^\ast$.
\end{corollary}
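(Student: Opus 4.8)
The plan is to reduce the integro-differential dual problem \eqref{dual} to the purely parabolic problem \eqref{orig_A}, for which Theorem~\ref{Th_regul} already supplies the regularity theory, and then to invoke that theorem twice in a bootstrap. Two features separate \eqref{dual} from \eqref{orig_A}: it is driven by the material derivative $\check v$ along the reversed flow $\widetilde\bw$ on the reversed surface $\widetilde\Gamma(t)=\Gamma(T-t)$, and it carries the nonlocal term $\sigma\int_{\widetilde\Gamma(t)}v\,ds$. I would handle these two features separately.

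First I would dispose of the time reversal. The map $(x,t)\mapsto(x,T-t)$ is an isometry of $\R^4$ carrying $\widetilde\Gs:=\bigcup_{t}\widetilde\Gamma(t)\times\{t\}$ onto $\Gs$ and $\widetilde\bw$ onto $\bw$; hence $\widetilde\Gs$ inherits all smoothness hypotheses of Theorem~\ref{Th_regul}, the Sobolev norms $H^k(\widetilde\Gamma(t))$ and $H^k(\widetilde\Gs)$ equal the corresponding norms on $\Gs$ under this reflection, and $H^1_0(\Gs)$ is mapped onto $H^1_0(\widetilde\Gs)$. Reading all estimates through this reflection, it suffices to establish \eqref{H2est} on $\widetilde\Gs$, and Theorem~\ref{Th_regul} applies verbatim to any problem $\check v-\nu_d\Delta_{\widetilde\Gamma}v=g$, $v(\cdot,0)=0$ posed on $\widetilde\Gs$.

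Next I would move the nonlocal term to the right-hand side: writing $\bar v(t):=\int_{\widetilde\Gamma(t)}v\,ds$, the solution $v$ satisfies $\check v-\nu_d\Delta_{\widetilde\Gamma}v=\tilde f$ with $\tilde f:=f^\ast-\sigma\bar v$, where $\sigma\bar v$ is viewed as a function on $\widetilde\Gs$ that is constant in space. Since $\tilde f$ depends on $v$, the argument proceeds by bootstrap. The well-posedness of the stabilized dual problem gives the a priori bound $\|v\|_W\le c\|f^\ast\|_0$ (the analogue of \eqref{stabestimate}), whence $\|\sigma\bar v\|_0\le c\|\bar v\|_{L^2(0,T)}\le c\|v\|_0\le c\|f^\ast\|_0$ by Cauchy--Schwarz. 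Thus $\tilde f\in L^2(\widetilde\Gs)$ with $\|\tilde f\|_0\le c\|f^\ast\|_0$, and the first part of Theorem~\ref{Th_regul} (estimate \eqref{fg}) yields $v\in H^1(\widetilde\Gs)$, $v\in H^2(\widetilde\Gamma(t))$ for almost all $t$, and $\|v\|_{H^1(\widetilde\Gs)}^2+\int_0^T\|v\|_{H^2(\widetilde\Gamma(t))}^2\,dt\le c\|f^\ast\|_0^2$.

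To close the bootstrap I would upgrade the regularity of $\tilde f$. The term $f^\ast\in H^1_0(\widetilde\Gs)$ already lies in $H^1(\widetilde\Gs)$ and vanishes on $\widetilde\Gamma_0$. For the nonlocal term, the Leibniz formula \eqref{Leibniz} for the reversed flow gives $\frac{d}{dt}\bar v(t)=\int_{\widetilde\Gamma(t)}(\check v+v\,\DivG\widetilde\bw)\,ds$, and since $v\in H^1(\widetilde\Gs)$ controls $\check v$ and $v$ in $L^2$ while $\DivG\widetilde\bw$ is bounded, this shows $\bar v\in H^1(0,T)$ with $\|\bar v\|_{H^1(0,T)}\le c\|v\|_{H^1(\widetilde\Gs)}\le c\|f^\ast\|_0$. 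A spatially constant function with $L^2$ material derivative belongs to $H^1(\widetilde\Gs)$ (its surface gradient is controlled by its material derivative via \eqref{tanggrad1}--\eqref{tanggrad2}), and its trace on $\widetilde\Gamma_0$ equals $\bar v(0)=\int_{\widetilde\Gamma_0}v(\cdot,0)\,ds=0$ by the initial condition. Hence $\tilde f\in H^1(\widetilde\Gs)$ with $\tilde f|_{\widetilde\Gamma_0}=0$ and $\|\tilde f\|_{H^1(\widetilde\Gs)}\le c\|f^\ast\|_{H^1(\Gs)}$, so the second part of Theorem~\ref{Th_regul} (estimate \eqref{H2reg}) gives $v\in H^2(\widetilde\Gs)$ and $\sup_t\|v\|_{H^2(\widetilde\Gamma(t))}+\|v\|_{H^2(\widetilde\Gs)}\le c\|f^\ast\|_{H^1(\Gs)}$; reading this back through the reflection yields \eqref{H2est}. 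The main obstacle is precisely this last step: verifying that the rearranged nonlocal term $\sigma\bar v$ meets the hypotheses $g\in H^1(\widetilde\Gs)$ and $g|_{\widetilde\Gamma_0}=0$ of the second part of Theorem~\ref{Th_regul}. This is where the Leibniz formula and the intermediate regularity $v\in H^1(\widetilde\Gs)$ --- hence the bootstrap structure --- are indispensable, and where one must check with care that a spatially constant function of $t$ indeed lies in $H^1(\widetilde\Gs)$ with the claimed norm control.
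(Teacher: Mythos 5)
Your proposal is correct and follows essentially the same route as the paper's proof: rewrite \eqref{dual} as $\check v-\nu_d\Delta_{\widetilde\Gamma}v=F$ with $F=f^\ast-\sigma\bar v\in L^2$, apply the first part of Theorem~\ref{Th_regul}, use the Leibniz formula to upgrade $\bar v$ (hence $F$) to $H^1(\Gs)$ with vanishing trace at the initial surface, and then invoke the second part of the theorem. The only difference is presentational: you spell out the time-reversal identification and the fact that a spatially constant function with $H^1(0,T)$ mean lies in $H^1(\widetilde\Gs)$, steps the paper leaves implicit.
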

\begin{proof}
We have $v\in W_0\subset L^2(\Gs)$. Hence, $\int_{\widetilde\Gamma(t)} v\, ds\in L^2(\Gs)$
and
\[\left\|\int_{\widetilde\Gamma(t)} v\, ds\right\|_0\le (\max_{t\in[0,T]}|\widetilde\Gamma(t)|)\|v\|_0\le c\,\|f^\ast\|_{H'}
\le c\,\|f^\ast\|_{0}.\]
Therefore, $v$ solves the  parabolic surface problem
\begin{equation*} 
 \begin{split}
  \check{v} - \nu_d \Delta_{\widetilde \Gamma} v & = F \quad \text{on}~~\widetilde{\Gamma}(t), \\
    v(\cdot,0) &= 0 \quad \text{on}~~\widetilde \Gamma_0,
 \end{split}
\end{equation*}
with  $F:=f^\ast-\sigma\int_{\widetilde\Gamma(t)} v\, ds \in  L^2(\Gs)$  and $\|F\|_0\leq c \|f^\ast\|_0$. The first part of Theorem~\ref{Th_regul} yields   $\check{v}\in L^2(\Gs)$ and $\|\check{v}\|_0\leq c \|F\|_0$. Hence, employing   the Leibniz
formula we check $\frac{\partial }{\partial t}\int_{\widetilde\Gamma(t)} v\, ds\in L^2(\Gs)$. This
and $v\in H$ yields $\int_{\widetilde\Gamma(t)} v\, ds\in H^1(\Gs)$ together with a
corresponding a priori estimate. Therefore, $F\in H^1(\Gs)$ and $\|F\|_{H^1(\Gs)}\le c\,\|f^\ast\|_{H^1(\Gs)}$. From $v(\cdot,0)=0$ on $\widetilde \Gamma_0$ and $f^\ast|_{\widetilde \Gamma_0}=0 $  we get $F|_{\widetilde \Gamma_0}=0$. Applying the second part of the theorem completes the proof.
\end{proof}
\medskip

\begin{lemma} \label{vardual}
Assume  $v\in H^2(\Gs)$ solves \eqref{dual} for some $f^\ast \in H^1_0(\Gs)$.
Define
$v^\ast(x,t):=v(x,T-t), \quad x \in  \Gamma(t)=\widetilde \Gamma(T-t)$.
Then one has
\begin{equation} \label{dual2}
 \la \dot z, v^\ast \ra_b +a_\sigma(z,v^\ast) +d(z,v^\ast)= (z,f^\ast)_0 \quad \text{for all}~~z \in W_h + H^1(\Gs).
\end{equation}
\end{lemma}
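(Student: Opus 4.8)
The plan is to reduce the discrete bilinear form evaluated at $(z,v^\ast)$ to a plain $L^2(\Gs)$ pairing against the strong residual of the time-reversed dual equation, and then to invoke \eqref{dual}. The argument rests on three pointwise consequences of the reversal $v^\ast(x,t)=v(x,T-t)$, which I would record first. Differentiating along the forward flow and using $\widetilde\bw(x,T-t)=-\bw(x,t)$ gives $\dot v^\ast(x,t)=-\check v(x,T-t)$; since $\Gamma(t)=\widetilde\Gamma(T-t)$ one has $\Delta_\Gamma v^\ast(x,t)=(\Delta_{\widetilde\Gamma}v)(x,T-t)$; and the cross-sectional average satisfies $\overline{v^\ast}(t):=\int_{\Gamma(t)}v^\ast\,ds=\int_{\widetilde\Gamma(T-t)}v\,ds$.

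Next I would apply the slab-wise integration-by-parts identity for the broken material-derivative pairing established in the proof of Lemma~\ref{Lem_cont} (valid for all elements of $W^b$, and $z\in W_h+H^1(\Gs)\subset W^b$, $v^\ast\in H^2(\Gs)\subset W^b$):
\[
 \la\dot z,v^\ast\ra_b=\sum_{n=1}^N\Big((z_-^n,(v^\ast)_-^n)_{t_n}-(z_+^{n-1},(v^\ast)_+^{n-1})_{t_{n-1}}\Big)-(\DivG\bw\,z,v^\ast)_0-\la\dot v^\ast,z\ra_b.
\]
Because $v^\ast$ is continuous across time slabs, each one-sided trace at $t_n$ equals $v^\ast(\cdot,t_n)$ and the jumps of $v^\ast$ vanish. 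Adding $a_\sigma(z,v^\ast)+d(z,v^\ast)$, the term $(\DivG\bw\,z,v^\ast)_0$ cancels against the corresponding term in $a_\sigma$ (cf.\ \eqref{stabblf}), and the nodal terms regroup: combining $-\sum_n(z_+^{n-1},v^\ast(\cdot,t_{n-1}))_{t_{n-1}}$ with $d(z,v^\ast)=\sum_n([z]^{n-1},v^\ast(\cdot,t_{n-1}))_{t_{n-1}}$ and using $[z]^0=z_+^0$ telescopes everything down to the single endpoint contribution $(z_-^N,v^\ast(\cdot,T))_{t_N}$. This vanishes, since $v^\ast(\cdot,T)=v(\cdot,0)=0$ on $\widetilde\Gamma_0=\Gamma(T)$ by the initial condition in \eqref{dual}.

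At this point the discrete form collapses to $-\la\dot v^\ast,z\ra_b+\nu_d(\nablaG z,\nablaG v^\ast)_0+\sigma\int_0^T\bar z\,\overline{v^\ast}\,dt$. I would then integrate by parts in space in the gradient term; since each $\Gamma(t)$ is closed there are no boundary contributions, so $\nu_d(\nablaG z,\nablaG v^\ast)_0=-\nu_d(z,\Delta_\Gamma v^\ast)_0$, which is legitimate because Corollary~\ref{Cl_regul} supplies $v^\ast(\cdot,t)\in H^2(\Gamma(t))$ for almost all $t$. Writing the stabilization term as $\sigma(z,\overline{v^\ast})_0$ with $\overline{v^\ast}$ viewed as a function constant on each cross-section, and using $\la\dot v^\ast,z\ra_b=(\dot v^\ast,z)_0$ (no jumps in $v^\ast$), I obtain
\[
 \la\dot z,v^\ast\ra_b+a_\sigma(z,v^\ast)+d(z,v^\ast)=\big(z,\,-\dot v^\ast-\nu_d\Delta_\Gamma v^\ast+\sigma\,\overline{v^\ast}\big)_0.
\]
Substituting the three reversal identities, the bracket at $(x,t)$ equals $\check v(x,T-t)-\nu_d(\Delta_{\widetilde\Gamma}v)(x,T-t)+\sigma\int_{\widetilde\Gamma(T-t)}v\,ds$, which by \eqref{dual} evaluated at the reversed time $T-t$ is exactly $f^\ast(x,t)$; hence the right-hand side is $(z,f^\ast)_0$, which is \eqref{dual2}.

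The main obstacle I expect is bookkeeping rather than analysis: getting the telescoping of the nodal and jump terms exactly right (in particular the special role of $[z]^0=z_+^0$ and the fact that only the $t=T$ endpoint survives), and being careful with the reversal convention so that ``$f^\ast$ on $\widetilde\Gamma(t)$'' in \eqref{dual} is read as $f^\ast(\cdot,T-t)$, which is precisely what makes the final substitution land on $(z,f^\ast)_0$ rather than on a time-reversed version of $f^\ast$. All the spatial integrations by parts and the pointwise use of the dual PDE are justified by the $H^2(\Gs)$ regularity of $v$ furnished by Corollary~\ref{Cl_regul}.
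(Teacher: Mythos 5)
Your proof is correct and follows essentially the same route as the paper: slab-wise integration by parts in time (the Leibniz rule), telescoping of the nodal terms against $d(z,v^\ast)$ using the continuity of $v^\ast$ and $[z]^0=z_+^0$, vanishing of the $t=T$ endpoint via $v^\ast(\cdot,T)=v(\cdot,0)=0$, spatial integration by parts, and the reversal identities $\dot v^\ast(\cdot,t)=-\check v(\cdot,T-t)$, $\Delta_{\Gamma(t)}v^\ast(\cdot,t)=\Delta_{\widetilde\Gamma(T-t)}v(\cdot,T-t)$ combined with \eqref{dual}. In fact you spell out the telescoping and the endpoint term more explicitly than the paper does, and your remark on reading $f^\ast$ at reversed time matches the paper's implicit convention.
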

\begin{proof}
 From the definitions and using Leibniz rule we obtain (note that $v^\ast $ is continuous, hence $v_{-}^{\ast,n}=v_{+}^{\ast,n}=v^{\ast,n}$):
\begin{align*}
   &\la \dot z, v^\ast \ra_b +a_\sigma(z,v^\ast) +d(z,v^\ast) \\
 &= \sum_{n=1}^N \int_{t_{n-1}}^{t_n} \int_{\Gamma(t)} \dot z v^\ast + z v^\ast \DivG \bw \, ds \, dt +\sum_{n=1}^N ([z]^{n-1},v^{\ast,n-1})_{t_{n-1}}\\
 &\quad+\nu_d (\nablaG z, \nablaG v^\ast)_0
 +\sigma\int_0^T\int_{\Gamma(t)} z\,dx\int_{\Gamma(t)} v^\ast\,dx\,dt\\
& = \sum_{n=1}^N \big( (z_{-}^n,v^{\ast,n})_{t_n}-(z_{+}^{n-1},v^{\ast,n-1})_{t_{n-1}}\big) - \sum_{n=1}^N \int_{t_{n-1}}^{t_n} \int_{\Gamma(t)} z \dot v^\ast \, ds \, dt  \\  & \quad + \sum_{n=1}^N ([z]^{n-1},v^{\ast,n-1})_{t_{n-1}} +\nu_d (\nablaG z, \nablaG v^\ast)_0+\sigma(z, \int_{\Gamma(t)}v^\ast\,dx)_0\\
& = -(\dot v^\ast +  \nu_d \Delta_\Gamma v^\ast-\sigma\int_{\Gamma(t)} v^\ast\,dx, z)_0 .
\end{align*}
Now note that on $\Gs$:
\[
\begin{split}
 \dot v^\ast(\cdot,t) & =\frac{\partial v^\ast}{\partial t}(\cdot,t)+ \bw(\cdot,t)\nabla v^\ast(\cdot,t)= -\frac{\partial v}{\partial t}(\cdot,T-t)-\widetilde\bw (\cdot, T-t)\cdot \nabla v(\cdot, T-t)  \\ & = - \check{v}(\cdot,T-t),
\end{split} \]
 and $\Delta_{\Gamma(t)}v^\ast(\cdot,t)=\Delta_{\widetilde \Gamma(T-t)} v(\cdot,T-t)$. From this and the equation for $v$ in \eqref{dual} it follows that $\dot v^\ast +  \nu_d \Delta_\Gamma v^\ast-\sigma\int_{\Gamma(t)} v^\ast\,dx=-f^\ast $ on $\Gs$. This completes the proof.
\end{proof}

Denote by $\|\cdot\|_{-1}$ a norm dual to the $H^1_0(\Gs)$ norm with respect to the $L^2$-duality.
In the  next theorem we present the second main result of this paper.

\begin{theorem} \label{dualthm}  Assume that $\Gs$ is sufficiently smooth (as in Theorem~\ref{Th_regul}) and that the assumptions of Theorem~\ref{thmmain1} are satisfied.
Then the following  error estimate holds:
\[
 \|u-u_h\|_{-1} \leq c h^2 (\|u\|_{H^2(\Gs)}+\sup_{t\in[0,T]}\|u\|_{H^2(\Gamma(t))}).
\]
\end{theorem}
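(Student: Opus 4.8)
The plan is to run a standard Aubin--Nitsche duality argument, using the dual problem \eqref{dual} and its regularity estimate from Corollary~\ref{Cl_regul} to convert the $\enorm{\cdot}_h$ bound of Theorem~\ref{thmmain1} into an improved $\|\cdot\|_{-1}$ bound. By the definition of the dual norm,
\[
 \|u-u_h\|_{-1} = \sup_{0 \neq f^\ast \in H^1_0(\Gs)} \frac{(u-u_h, f^\ast)_0}{\|f^\ast\|_{H^1(\Gs)}},
\]
so it suffices to bound $(e, f^\ast)_0$ for the error $e=u-u_h$ and an arbitrary $f^\ast \in H^1_0(\Gs)$. First I would take the solution $v \in H^2(\Gs)$ of the dual problem with right-hand side $f^\ast$, which is well defined by Corollary~\ref{Cl_regul}, and pass to its time-reversal $v^\ast$. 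The key identity is Lemma~\ref{vardual}: since $e \in W_h + H^1(\Gs)$ (note $u \in H^2(\Gs) \subset H^1(\Gs)$ and $u_h \in W_h$), we may take $z=e$ in \eqref{dual2} to obtain the \emph{exact representation}
\[
 (e, f^\ast)_0 = \la \dot e, v^\ast \ra_b + a_\sigma(e,v^\ast) + d(e,v^\ast).
\]

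Next I would exploit Galerkin orthogonality \eqref{Galerkin}: for any $v_h^\ast \in W_h$ we may subtract the discrete bilinear form, giving
\[
 (e,f^\ast)_0 = \la \dot e, v^\ast - v_h^\ast \ra_b + a_\sigma(e, v^\ast - v_h^\ast) + d(e, v^\ast - v_h^\ast).
\]
The natural choice is $v_h^\ast = (I_h (v^\ast)^e)|_\Gs$, the interpolant used in section~\ref{s_interp}. I would then apply the second continuity estimate \eqref{cont_dual} of Lemma~\ref{Lem_cont}, bounding the right-hand side by
\[
 c\, \enorm{e}_h \Big( \|v^\ast - v_h^\ast\|_{W^b} + \sum_{n=1}^{N-1}\|[v^\ast - v_h^\ast]^n\|_{t^n} + \|v^\ast - v_h^\ast\|_T \Big).
\]
The first factor is already controlled by Theorem~\ref{thmmain1}, contributing one power of $h$. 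For the second factor I would invoke the interpolation bounds of Theorem~\ref{thminter} applied to $v^\ast$, which supply the second power of $h$, with the jump and endpoint terms handled exactly as the $\sum_n \|[e_I]^n\|$ terms were handled in the proof of Theorem~\ref{thmmain1} (using $\Delta t \sim h$ to convert the $h^2 (\Delta t)^{-1}$ that arises from the cross-section estimates into $h$). The regularity of the dual solution then enters: by Corollary~\ref{Cl_regul} the interpolation factor is bounded by $c\,h\,\|f^\ast\|_{H^1(\Gs)}$.

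Putting the two factors together yields $(e,f^\ast)_0 \le c\,h^2 (\|u\|_{H^2(\Gs)} + \sup_t \|u\|_{H^2(\Gamma(t))})\,\|f^\ast\|_{H^1(\Gs)}$, and dividing by $\|f^\ast\|_{H^1(\Gs)}$ and taking the supremum gives the claim. \textbf{The main obstacle} I anticipate is the bookkeeping for the endpoint and jump terms in the continuity estimate \eqref{cont_dual}: one must verify that $v^\ast$ inherits the same $H^2$-regularity \emph{on each cross-section} $\Gamma(t^n)$ that Theorem~\ref{thminter} requires, so that the second and third interpolation bounds (the ones in the cross-section norm $\|\cdot\|_{t^n}$) are applicable and produce the crucial extra $(\Delta t)^{-1}$-to-$h$ conversion. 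This is precisely where the per-time-slice estimate $\sup_t\|v\|_{H^2(\Gamma(t))}$ in \eqref{H2est} is indispensable --- the mere global bound $\|v\|_{H^2(\Gs)}$ would not suffice --- and checking that $f^\ast \in H^1_0(\Gs)$ (rather than just $L^2$) is exactly what Corollary~\ref{Cl_regul} needs to guarantee $v \in H^2(\Gs)$ closes the argument.
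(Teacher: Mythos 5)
Your proposal is correct and follows essentially the same route as the paper's own proof: the representation $(e,f^\ast)_0$ via Lemma~\ref{vardual} with $z=e$, Galerkin orthogonality to replace $v^\ast$ by the interpolation error $e_I = v^\ast - I_h(v^\ast)^e$, the continuity bound \eqref{cont_dual}, the energy estimate of Theorem~\ref{thmmain1} for one factor of $h$, and the interpolation bounds of Theorem~\ref{thminter} combined with the dual regularity \eqref{H2est} for the second. Your closing observation --- that the cross-section estimate $\sup_{t}\|v\|_{H^2(\Gamma(t))}$ in \eqref{H2est}, and hence the strengthened hypothesis $f^\ast \in H^1_0(\Gs)$, is exactly what makes the jump and endpoint terms tractable --- matches the paper's reasoning precisely (cf.\ Remark~\ref{remNorm}).
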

\begin{proof}
Take arbitrary $f^\ast\in H^1_0(\Gs)$. Using the relation in \eqref{dual2}, Galerkin orthogonality, the second continuity result in Lemma~\ref{Lem_cont} and the error estimate from Theorem~\ref{thmmain1}  we obtain with
$e:=u-u_h$, $e_I=v^\ast -I_h(v^\ast)^e \in W^b$:
\begin{align*}
(e,f^\ast)_0 & = \la \dot e, v^\ast \ra_b +a_\sigma(e,v^\ast) +d(e,v^\ast)
   =  \la \dot e, e_I \ra_b +a_\sigma(e,e_I) +d(e,e_I) \\& \leq c \enorm{e}_h (\|e_I\|_{W^b}+\sum_{n=1}^{N-1}\|[e_I]^n\|_{t^n}+\|e_I\|_T)   \\
  & \leq c h (\|u\|_{H^2(\Gs)}+\sup_{t\in[0,T]}\|u\|_{H^2(\Gamma(t))}) (\|e_I\|_{W^b}+\sum_{n=1}^{N-1}\|[e_I]^n\|_{t^n}+\|e_I\|_T)
 \end{align*}
Applying interpolation estimates  as in the proof of  Theorem~\ref{thmmain1}, we get
\[
\|e_I\|_{W^b}+\sum_{n=1}^{N-1}\|[e_I]^n\|_{t^n}+\|e_I\|_T\le c\,h\,(\|v^\ast\|_{H^2(\Gs)}+\sup_{t\in[0,T]}\|v^\ast\|_{H^2(\Gamma(t))}).
\]
Hence, using \eqref{H2est} we get
\begin{align*}
(e,f^\ast)_0 &  \leq c h^2  (\|u\|_{H^2(\Gs)}+\sup_{t\in[0,T]}\|u\|_{H^2(\Gamma(t))})(\|v^\ast\|_{H^2(\Gs)}+\sup_{t\in[0,T]}\|v^\ast\|_{H^2(\Gamma(t))}) \\ & \leq c h^2  (\|u\|_{H^2(\Gs)} + \sup_{t\in[0,T]}\|u\|_{H^2(\Gamma(t))}) \|f^\ast\|_{H^1(\Gs)}.
\end{align*}
 From this the result immediately follows.
\end{proof}

\begin{remark} \label{remNorm} \rm Numerical experiments suggest that the method has second order convergence  in the  $L^2(\Gs)$ norm. 
We proved the second order convergence only in the weaker $H^{-1}(\Gs)$ norm.
The reason for using this weaker norm  is that our arguments use isotropic  polynomial interpolation error
bounds on 4D space-time elements. Naturally, such bounds require isotropic space-time $H^2(\Gs)$-regularity bounds for the  solution. For our  class of parabolic problems such isotropic regularity bounds are more restrictive than in an elliptic case, since the solution is in  general less regular in time than in space. Due to this, instead of the common $f^*\in L^2(\Gs)$ regularity assumption  for the right-hand side
of the dual problem we need the stronger assumption $f^*\in H^1(\Gs)$ to guarantee a $H^2(\Gs)$-regularity of the solution. This stronger regularity requirement for $f^\ast$ results in the weaker $H^{-1}(\Gs)$ error norm. It may be possible to derive second order convergence in the $L^2(\Gs)$-norm, if suitable anisotropic  interpolation estimates are available.   So far, however, we have not been able to derive such estimates for  the finite element space-time trace space. This topic is left for future  research.
\end{remark}

\section{Proof of Theorem~\ref{Th_regul}}
\label{Sec_A}\label{sec8}
 Without   loss of generality we may
set $\nu_d=1$. 
The weak formulation of \eqref{orig_A} is as follows: determine $u \in \Wo$ such that
\begin{equation} \label{weakformuP}
 \la\dot u,v\ra +(\nablaG u, \nablaG v)_0 =( f,v)_0 \quad \text{for all}~~v \in H.
\end{equation}
The proof is based on techniques as in \cite{Dziuk07}, \cite{Evans}. We define a Galerkin solution in a sequence of nested spaces spanned by a special
choice of smooth basis functions. We derive uniform energy estimates for these Galerkin solutions and based on a compactness argument  these estimates imply a bound in  the $\|\cdot\|_{H^1(\Gs)}$ norm for the weak  limit of these Galerkin solutions. We use a known $H^2$-regularity result for the Laplace-Beltrami equation on a smooth manifold and energy estimates for the material derivative of the Galerkin solutions to derive a bound on the $\|\cdot\|_{H^2(\Gs)}$ norm for the weak  limit of these Galerkin solutions.\\[1ex]
\emph{1. Galerkin subspace and boundedness of $L^2$-projection}.
We introduce Galerkin subspaces of $\Wo$, similar to those used in  \cite{Dziuk07}. For this we
 need a smooth diffeomorphism between $\Gs$ and the cylindrical  reference domain $\widehat{\Gs}:=\Gamma_0\times(0,T)$.
 We use a Langrangian mapping from  $\Gamma_0 \times [0,T]$ to the space-time manifold  $\Gs$, as in  ~\cite{ORXsinum}. The velocity field $\bw$ and  $\Gamma_0$ are sufficiently smooth such that for all $y \in \Gamma_0$ the ODE system
\[
  \Phi(y,0)=y, \quad \frac{\partial \Phi}{\partial t}(y,t)= \bw(\Phi(y,t),t), \quad t\in [0,T],
\]
has a unique solution $x:=\Phi(y,t) \in \Gamma(t)$ (recall that $\Gamma(t)$ is transported with the  velocity field $\bw$). The corresponding inverse mapping is given by $\Phi^{-1}(x,t):=y \in \Gamma_0$, $x \in \Gamma(t)$. The Lagrangian mapping  $\Phi$  induces a bijection
\[ F:\,   \Gamma_0 \times [0,T] \to \Gs, \quad ~F(y,t):=(\Phi(y,t),t).
\]
  We assume this bijection to be a $C^2$-diffeomorphism between these manifolds.

For a function $u$ defined on $\Gs$  we define $\widehat u=u\circ F$ on $\Gamma_0 \times (0,T)$:
\[
 \widehat u(y,t)=u(\Phi(y,t),t)=u(x,t).
\]
Vice versa, for a function $\widehat u$ defined on $\Gamma_0 \times (0,T)$  we define $u=\widehat u\circ F^{-1}$ on $\Gs$:
\begin{equation*}
u(x,t)=\widehat u(\Phi^{-1}(x,t),t)=\widehat u(y,t).
\end{equation*}
By construction, we have
\begin{equation} \label{transf1}
  \dot u (x,t)= \frac{\partial \widehat u}{\partial t}(y,t).
\end{equation}
We need a surface integral transformation formula. For this we consider  a local parametrization of $\Gamma_0$, denoted by $\mu:\Bbb{R}^2 \to \Gamma_0$, which is {at least $C^2$ smooth}. Then, $\Phi \circ \mu:=\Phi(\mu(\cdot), t)$ defines a {$C^2$ smooth} parametrization of $\Gamma(t)$.    For the surface measures $d\,\widehat{s}$ and $ds$ on $\Gamma_0$ and $\Gamma(t)$, respectively, we have the relations
\begin{equation} \label{fromtrans1}
  ds =\gamma(\cdot ,t)\, d\,\widehat{s},\qquad
 d\,\widehat{s}=
  \widetilde\gamma(\cdot,t) \, ds,
\end{equation}
with functions $\gamma$ and $\widetilde\gamma$ that are both
$C^1$ smooth, bounded and uniformly bounded away from zero: $\gamma\ge c>0$ on $\Gamma_0\times (0,T)$ and
$\widetilde\gamma\ge c>0$ on ${\Gs}$, cf. section 3.3 in~\cite{ORXsinum}.

Denote by $\widehat{\phi}_j$, $j\in \mathbb{N}$ the eigenfunctions of the Laplace-Beltrami operator on $\Gamma_0$.
Define $\phi_j:\, \Gs \to \Bbb{R}$
by $
\phi_j(\Phi(y,t),t):=\widehat{\phi}_j(y),
$
and note that due to \eqref{transf1} one has $\dot \phi_j=0$.
The set $\{\phi_j(\cdot,t)\,|\,j\in\mathbb{N}\}$ is dense in $H^1(\Gamma(t))$.   We define the spaces
\begin{align*} X_N(t) & = \text{span}\{\phi_1(\cdot,t),\dots,\phi_N(\cdot,t)\}, \\
 X_N &= \{\, \sum_{j=1}^N u_j(t)\phi_j(x,t)~|~ u_j \in H^1(0,T; \Bbb{R}), ~u_j(0)=0, ~1 \leq j \leq N\,\}.
\end{align*}
Below, in step 2,  we construct a Galerkin solution in the subspace $X_N \subset \Wo$. Note that for $v \in X_N$ we have $v(\cdot,t) \in X_N(t)$. In the analysis in step 6, we need  $H^1$-stability of the  $L^2$-projection on $X_N(t)$. This stability result is derived in the following lemma.

\begin{lemma}\label{l_A2} Denote by $P_{X_N(t)}$ the $L^2$-orthogonal projector on $X_N(t)$, i.e.,  for $\zeta \in L^2(\Gamma(t))$:
\[
  \int_{\Gamma(t)} P_{X_N(t)} \zeta \, v \,ds = \int_{\Gamma(t)} \zeta v \, ds \quad \text{for all}~~v \in X_N(t).
\]
For $\zeta \in H^1(\Gamma(t))$ the estimate
\begin{equation}\label{H1stab}
\|\nabla_{\Gamma} P_{X_N(t)}\zeta\|_{L^2(\Gamma(t))}\le C\, \|\zeta\|_{H^1(\Gamma(t))}
\end{equation}
holds with a constant independent of $N$ and $t$.
\end{lemma}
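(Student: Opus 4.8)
The plan is to transfer the projection from the moving surface $\Gamma(t)$ to the fixed reference manifold $\Gamma_0$ by means of the Lagrangian $C^2$-diffeomorphism $\Phi(\cdot,t)$, and there to exploit that the basis $\{\widehat\phi_j\}$ consists of Laplace-Beltrami eigenfunctions. For $\zeta\in H^1(\Gamma(t))$ write $\widehat\zeta=\zeta\circ\Phi(\cdot,t)$ and set $w:=\widehat{P_{X_N(t)}\zeta}$. Using the measure transformation \eqref{fromtrans1} and the uniform (in $t$) equivalence of the tangential gradients induced by the $C^2$-diffeomorphism, one checks that $\|\zeta\|_{H^1(\Gamma(t))}\simeq\|\widehat\zeta\|_{H^1(\Gamma_0)}$ and $\|\nablaG P_{X_N(t)}\zeta\|_{L^2(\Gamma(t))}\simeq\|\nabla_{\Gamma_0}w\|_{L^2(\Gamma_0)}$, with constants depending only on the diffeomorphism and on $\gamma,\widetilde\gamma$, hence not on $t$ or $N$. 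Moreover, the defining relation of $P_{X_N(t)}$ pulls back to the statement that $w$ is the $\gamma(\cdot,t)$-\emph{weighted} $L^2(\Gamma_0)$-orthogonal projection of $\widehat\zeta$ onto $\widehat X_N:=\mathrm{span}\{\widehat\phi_1,\dots,\widehat\phi_N\}$, i.e.\ $\int_{\Gamma_0}\gamma\,w\,v\,d\widehat{s}=\int_{\Gamma_0}\gamma\,\widehat\zeta\,v\,d\widehat{s}$ for all $v\in\widehat X_N$. Thus it suffices to prove $\|\nabla_{\Gamma_0}w\|_{L^2(\Gamma_0)}\le c\|\widehat\zeta\|_{H^1(\Gamma_0)}$ with $c$ independent of $N$ and of the smooth, uniformly positive and bounded weight $\gamma$.

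Second, I would compare $w$ with the \emph{unweighted} spectral projection $r:=\widehat P_N\widehat\zeta=\sum_{j\le N}(\widehat\zeta,\widehat\phi_j)_{L^2(\Gamma_0)}\widehat\phi_j$. Since the $\widehat\phi_j$ are simultaneously $L^2$- and $H^1$-seminorm orthogonal, $\widehat P_N$ is stable in both norms, so $\|r\|_{L^2(\Gamma_0)}\le\|\widehat\zeta\|_{L^2(\Gamma_0)}$ and $\|\nabla_{\Gamma_0}r\|\le\|\nabla_{\Gamma_0}\widehat\zeta\|$; likewise $\eta:=(I-\widehat P_N)\widehat\zeta$ obeys $\|\eta\|_{H^1(\Gamma_0)}\le\|\widehat\zeta\|_{H^1(\Gamma_0)}$. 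Subtracting the two projection identities (weighted for $w$, unweighted for $r$) yields, for $q:=w-r\in\widehat X_N$, the defect relation
\[
  \int_{\Gamma_0}\gamma\,q\,v\,d\widehat{s}=\int_{\Gamma_0}(\gamma-1)\,\eta\,v\,d\widehat{s}\qquad\text{for all }v\in\widehat X_N.
\]
The crucial point is that the test function may be taken to be $v=-\Delta_{\Gamma_0}q$, which again lies in $\widehat X_N$ because $q$ is a combination of the first $N$ eigenfunctions. Integration by parts on the closed manifold $\Gamma_0$ then gives
\[
  \int_{\Gamma_0}\gamma|\nabla_{\Gamma_0}q|^2+\int_{\Gamma_0} q\,\nabla_{\Gamma_0}\gamma\cdot\nabla_{\Gamma_0}q=\int_{\Gamma_0}\nabla_{\Gamma_0}[(\gamma-1)\eta]\cdot\nabla_{\Gamma_0}q,
\]
and invoking $\gamma\ge c>0$, $\nabla_{\Gamma_0}\gamma\in L^\infty$, together with $\|\eta\|_{H^1(\Gamma_0)}\le\|\widehat\zeta\|_{H^1(\Gamma_0)}$ and the bound $\|q\|_{L^2(\Gamma_0)}\le\|w\|+\|r\|\le c\|\widehat\zeta\|_{L^2(\Gamma_0)}$ (using $L^2$-stability of the weighted projection), a Cauchy-Young argument gives $\|\nabla_{\Gamma_0}q\|\le c\|\widehat\zeta\|_{H^1(\Gamma_0)}$. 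Combined with $\|\nabla_{\Gamma_0}r\|\le\|\nabla_{\Gamma_0}\widehat\zeta\|$ this yields $\|\nabla_{\Gamma_0}w\|\le\|\nabla_{\Gamma_0}q\|+\|\nabla_{\Gamma_0}r\|\le c\|\widehat\zeta\|_{H^1(\Gamma_0)}$, which is the claim after transforming back.

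The main obstacle is obtaining an $N$-independent constant for the weighted projection: the weight $\gamma$ destroys the $L^2$-orthogonality of the basis, so one cannot merely invoke stability of a spectral projection, and the naive route of estimating $\|\nabla_{\Gamma_0}q\|\lesssim\sqrt{\lambda_{N+1}}\,\|q\|$ through an inverse inequality produces a constant that blows up as $N\to\infty$. The device that removes this difficulty is testing the defect relation with $-\Delta_{\Gamma_0}q$, which remains inside $\widehat X_N$ precisely because of the spectral structure; after one integration by parts only $\nabla_{\Gamma_0}\gamma\in L^\infty$ (i.e.\ $\gamma\in C^1$, which is guaranteed by \eqref{fromtrans1}) and the two-norm stability of $\widehat P_N$ enter, and no derivative of $q$ beyond first order is ever lost. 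A secondary point requiring care is the uniformity in $t$ of all constants in the transfer step, which follows from the uniform bounds on the $C^2$-diffeomorphism $\Phi(\cdot,t)$ and on $\gamma,\widetilde\gamma$ recorded after \eqref{fromtrans1}.
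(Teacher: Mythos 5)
Your proof is correct and rests on exactly the two ingredients of the paper's own argument: pulling the projection back through the Lagrangian diffeomorphism $\Phi$ via \eqref{fromtrans1}, so that $P_{X_N(t)}$ becomes the $\gamma$-weighted $L^2(\Gamma_0)$-projection onto $\widehat X_N=\mathrm{span}\{\widehat\phi_1,\dots,\widehat\phi_N\}$, and then exploiting $\Delta_{\Gamma_0}\widehat X_N\subseteq \widehat X_N$ (the spectral structure) to test with a Laplacian and integrate by parts, so that only $\gamma\ge c>0$ and $\nabla_{\Gamma_0}\gamma\in L^\infty$ enter, uniformly in $t$ and $N$. The one difference is cosmetic: the paper tests the weighted-projection identity with $\Delta_{\Gamma_0}f_N$ directly, obtaining
\[
\int_{\Gamma_0}\gamma\,|\nabla_{\Gamma_0}f_N|^2\,ds=\int_{\Gamma_0}\big(\nabla_{\Gamma_0}f_N\cdot\nabla_{\Gamma_0}\gamma\big)(f-f_N)\,ds+\int_{\Gamma_0}\gamma\,\nabla_{\Gamma_0}f_N\cdot\nabla_{\Gamma_0}f\,ds
\]
in a single step, whereas your detour through the unweighted spectral projection $r$ and the defect $q=w-r$ is sound but unnecessary.
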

\begin{proof}
{Fix some $t\in(0,T)$ and let $\gamma$ be a smooth and positive function on $\Gamma_0$ defined in \eqref{fromtrans1},}
then $(f,g)_{\gamma}:=\int_{\Gamma_0}fg\,\gamma\, d s$ defines a scalar product on $L^2(\Gamma_0)$. This scalar product induces a norm equivalent to the standard $L^2(\Gamma_0)$-norm. For given $f\in H^1(\Gamma_0)$ let $f_N$ be an $(\cdot,\cdot)_\gamma$-orthogonal
projection on $X_N(0)$.
Since $\Delta_\Gamma f_N\in X_N(0)$, we have $
\int_{\Gamma_0}\gamma \,f \Delta_\Gamma f_N \, d s=\int_{\Gamma_0}\gamma\, f_N \Delta_\Gamma f_N \, d s$.
Using this and integration by parts we obtain  the identity:
\[
\int_{\Gamma_0}|\nabla_\Gamma f_N|^2\,\gamma\, d s=\int_{\Gamma_0}(\nabla_\Gamma f_N\nabla_\Gamma \gamma)\,(f-f_N)\, d s+
\int_{\Gamma_0}(\nabla_\Gamma f_N\nabla_\Gamma f)\gamma\,\, d s.
\]
Applying the Cauchy inequality, positivity and smoothness of $\gamma$, we get
\[
\int_{\Gamma_0}|\nabla_\Gamma f_N|^2\, d s \le c\,\int_{\Gamma_0}f^2+|\nabla_\Gamma f|^2\,\, d s,
\]
i.e. the $(\cdot,\cdot)_\gamma$-orthogonal  projection on $X_N(0)$ is $H^1$-stable. For $\zeta \in H^1(\Gamma(t))$
define $\widehat{\zeta}=\zeta\circ\Phi\in H^1(\Gamma_0)$ and $\widehat{\zeta}_N=\zeta_N\circ\Phi\in X_N(0)$. From
\[
\int_{\Gamma_0}\widehat{\zeta}_N\widehat{\psi}_N\gamma\,\, d \widehat{s}=\int_{\Gamma(t)}\zeta_N\psi_N\,\, d s=
\int_{\Gamma(t)}\zeta \psi_N\,\, d s=\int_{\Gamma_0}\widehat{\zeta}\widehat{\psi}_N\gamma\,\,d \widehat{s}\quad\forall~\widehat{\psi}_N\in X_N(0),
\]
it follows that  $\widehat{\zeta}_N$ is the $(\cdot,\cdot)_\gamma$-orthogonal  projection of $\widehat{\zeta}$.
Using the $H^1$-stability of this projection, the smoothness of $\Phi$
and $\Phi^{-1}$ and \eqref{fromtrans1}, we obtain
\[
\|\nabla_{\Gamma} \zeta_N\|_{L^2(\Gamma(t))}\le C\, \|\nabla_{\Gamma} \widehat{\zeta}_N\|_{L^2(\Gamma_0)}
\le C\, \|\widehat{\zeta}\|_{H^1(\Gamma_0)} \le C\,\|\zeta\|_{H^1(\Gamma(t))}.
\]
Thus, the estimate in \eqref{H1stab} holds.
\end{proof}
\ \\[1ex]
\emph{2. Existence of Galerkin solution $u_N \in X_N$ and its boundedness in $H^1(\Gs)$ uniformly in $N$.}
We look for a Galerkin solution $u_N\in X_N$ to \eqref{orig_A}. We consider the following  projected  surface parabolic equation: determine $\bu_N =(u_1, \ldots u_N) \in H^1(0,T;\Bbb{R}^N)$ such that for $u_N(x,t):= \sum_{j=1}^N u_j(t)\phi_j(x,t)$ we have $u_N(\cdot,0)=0$ and
\begin{equation}\label{ODE}
\int_{\Gamma(t)} (\dot{u}_N -\Delta_{\Gamma} u_N)\phi \, \,d s = \int_{\Gamma(t)} f \phi \, \, d s\quad \text{for all}~~ \phi \in X_N(t), \quad \text{a.e. in}~t \in [0,T].
\end{equation}
In terms of $\bu_N$ this can be rewritten as a linear system of ODEs of the form
\begin{equation} \label{ODE1}
M(t)\frac{d\bu_N}{dt} + A(t)\bu_N(t)=b(t), \quad \bu_N(0)=0.
\end{equation}
 The matrices $M, A$  are symmetric positive semi-definite. Since for the eigenfunctions we have $\widehat{\phi}_i\in C^2(\Gamma_0)$, see \cite{Aubin},  and the diffeomorphism $F$ is $C^2$-smooth, we have  $M, A \in W^{1}_{\infty}(0,T;\Bbb{R}^{N \times N})$. The smallest eigenvalue of $M(t)$ is bounded away from zero uniformly in $t \in [0,T]$. The right-hand side satisfies $b \in L^2(0,T;\Bbb{R}^N)$.
 By the theory of linear ordinary differential equations, e.g., Proposition 6.5 in \cite{Hunter}, we have  existence of a unique solution $\bu_N \in H^1(0,T;\Bbb{R}^N)$. Moreover, if $f\in H^1(\Gs)$, then $b \in H^1(0,T;\Bbb{R}^N)$ and
$\bu_N \in H^2(0,T; \Bbb{R}^N)$.
For the corresponding Galerkin solution $u_N \in X_N$, given by $u_N(x,t) =\sum_{j=1}^N u_j(t)\phi_j(x,t)$, we derive energy estimates.
Taking $\phi=u_N(\cdot,t) \in X_N(t)$  in \eqref{ODE} and applying integration by parts we obtain the identity
\[
\frac12\frac{\, d}{\, d t}\int_{\Gamma(t)} {u}_N^2 \, \,d s + \int_{\Gamma(t)} |\nabla_\Gamma u_N|^2
-\frac12(\div_\Gamma\bw){u}_N^2 \, \, d s=\int_{\Gamma(t)} fu_N \, \, d s.
\]
Applying the Cauchy inequality to handle the term on the right-hand side
 and  using a Gronwall argument, with $u_N(\cdot,0)=0$, yields
\[
\sup_{t\in(0,T)}\int_{\Gamma(t)}{u}_N^2\, d s + \int_{0}^T\int_{\Gamma(t)}|\nabla_\Gamma u_N|^2\,d s\,d t
\le C \|f\|^2_0,
\]
and thus
\begin{equation}\label{est_energy}
 \|u_N\|_H \le C \|f\|_0,
\end{equation}
with a constant independent of $N$.
Taking $\phi=\dot u_N(\cdot,t) \in X_N(t)$  in \eqref{ODE} and using the identity \[\int_\Gamma \nablaG v \cdot \nablaG \dot v\, \,d s = \frac12 \frac{d}{dt} \int_\Gamma |\nablaG v|^2 \, \,d s -\frac12 \int_\Gamma|\nablaG v|^2 \div_\Gamma\bw\, \, d s + \int_\Gamma D(\bw) \nablaG v \cdot \nablaG v \, \,d s,\] with the  tensor $D(\bw)_{ij}=\frac12 \big( \frac{\partial \bw_j}{\partial x_i} + \frac{\partial \bw_i}{\partial x_j}\big) $ (cf. (2.11) in \cite{Dziuk07}) yields
\begin{align*}
& \int_{\Gamma(t)}\dot{u}_N^2 \, \,d s +\frac12\frac{\,d}{\,d t}\int_{\Gamma(t)} |\nabla_\Gamma u_N|^2 \, \,d s \\
  & = \frac12\int_{\Gamma(t)} |\nabla_\Gamma u_N|^2\div_\Gamma\bw \,  \,d s-
\int_{\Gamma(t)} D(\bw)\nabla_\Gamma u_N\cdot\nabla_\Gamma u_N \, \,d s
+\int_{\Gamma(t)} f\dot{u}_N\, \,d s.
\end{align*}
Employing the Cauchy inequality and a Gronwall inequality, with $u_N(\cdot,0)=0$, we obtain
\begin{equation}\label{est_energy2}
\sup_{t\in(0,T)}\int_{\Gamma(t)}|\nabla_\Gamma u_N|^2\,d s + \int_{0}^T\int_{\Gamma(t)}|\dot{u}_N|^2\,d s\,d t
\le C \|f\|^2_0,
\end{equation}
with a constant independent of $N$. From the results in \eqref{est_energy} and \eqref{est_energy2} we obtain the uniform boundedness result
\begin{equation} \label{uniH1}
 \|u_N \|_{H^1(\Gs)} \leq C \|f\|_0.
\end{equation}
\emph{3. The weak limit $u$ solves \eqref{weakformuP} and $\|u\|_{H^1(\Gs)} \leq C \|f\|_0$ holds.}
 From the uniform boundedness \eqref{uniH1} it follows that there is a subsequence, again denoted by $(u_N)_{N \in \Bbb{N}}$, that weakly converges to some $u\in H^1(\Gs)$:
\begin{equation} \label{weakl}
u_N\rightharpoonup  u\quad\text{in}~~ H^1(\Gs).
\end{equation}
As a direct consequence of this weak convergence and \eqref{uniH1} we get
\begin{equation}\label{uest1}
\|u \|_{H^1(\Gs)}
\le c \|f\|_0.
\end{equation}
We recall an elementary result from  functional analysis. Let $X$, $Y$ be normed spaces, $T: X \to Y$  linear and bounded and  $(x_n)_{n \in \Bbb{N}}$ a sequence in $X$, then the following holds:
\begin{equation} \label{elFA}
  x_n \rightharpoonup x  \quad \text{in}~X ~~ \Rightarrow ~~Tx_n \rightharpoonup Tx  \quad \text{in}~Y.
\end{equation}
Hence, from \eqref{weakl} we obtain the following, which we need further on:
\begin{equation} \label{conv2}
  \dot u_N\rightharpoonup  \dot u\quad\text{in}~~ L^2(\Gs), \quad u_N\rightharpoonup  u\quad\text{in}~~ H.
\end{equation}
We now show that $u$ is the solution of \eqref{weakformuP}. Define $\hat X_N:={\rm span}\{\hat \phi_1, \ldots, \hat \phi_N\}$ and note that $\cup_{N \in \Bbb{N}} \hat X_N $ is dense in $H^1(\Gamma_0)$. The set $\hat{C}=\{ \, t \to \sum_{j=0}^n t^j \hat \psi_j ~|~\hat \psi_j \in \hat X_N, ~n, N \in \Bbb{N} \, \}$ is dense in $L^2(0,T; H^1(\Gamma_0))$. Using this and Lemma 3.3 in \cite{ORXsinum} it follows that $C=\{ \sum_{j=0}^n t^j \psi_j(x,t) ~|~\psi_j(\cdot,t) \in  X_N(t), ~n, N \in \Bbb{N} \, \}$ is dense in $H$. Consider $\psi(x,t)=t^j \phi_k(x,t)$. From \eqref{ODE} it follows that for $N \geq k$ we have
\[
 \int_0^T\int_{\Gamma(t)} \dot{u}_N \psi + \nablaG u_N \cdot \nablaG \psi \, \,d s  \, \,d t= \int_0^T\int_{\Gamma(t)} f \psi \, \,d s \, \,d t
\]
and using \eqref{weakl} it follows that this equality  holds with $u_N$ replaced by $u$. From linearity and density of $C$ in $H$ we conclude that $u \in H^1(\Gs) \subset W$ solves \eqref{weakformuP}. It remains to check whether $u$ satisfies the homogeneous initial condition.

 From the weak convergence in $H^1(\Gs)$, the boundedness of the trace operator $T: H^1(\Gs) \to L^2(\Gamma_0)$, $Tv=v(\cdot,0)$ and \eqref{elFA} it follows that $u_N(\cdot,0)$ converges weakly to $u(\cdot,0)$ in $L^2(\Gamma_0)$. From the property $u_N(\cdot,0)=0$ for all $N$ it follows that $u(\cdot,0)=0$ holds.
Hence $u \in \Wo$ holds.\\[1ex]
\emph{4. The estimate  $\|\nablaG^2 u\|_0 \leq c \|f\|_0$ holds.} The function  $u$ is a (weak) solution of $-\Delta_{\Gamma} u =f-\dot{u}$ on $\Gamma(t)$, with $f(\cdot,t)-\dot{u}(\cdot,t)\in L^2(\Gamma(t))$ for almost all $t \in [0,T]$. The $H^2$-regularity theory for a Laplace-Beltrami equation
on a smooth manifold (see \cite{Aubin}) yields $u\in H^2(\Gamma(t))$ and
\begin{equation}\label{eqAubin}
\|u\|_{H^2(\Gamma(t))} \leq C_t \|f(\cdot,t)-\dot{u}(\cdot,t)\|_{L^2(\Gamma(t))}.
 \end{equation}
Due to the smoothness of $\Gs$ we can assume $C_t$ to be uniformly bounded w.r.t. $t$. Using this and \eqref{uest1}
we get
\begin{equation}\label{uest2}
\|\nablaG^2 u\|_0^2 \leq \int_{0}^T\|u\|^2_{H^2(\Gamma(t))}\,d t
\leq  c \int_0^T \|f(\cdot,t)-\dot{u}(\cdot,t)\|_{L^2(\Gamma(t))}^2 \, \,d t \leq c \|f\|^2_0.
\end{equation}
 From this and  \eqref{uest1} the result \eqref{fg} follows.
\\[1ex]
\emph{5. The estimate  $\sup_{t\in[0,T]}\|u\|_{H^2(\Gamma(t))} + \|\nablaG \dot u\|_0 \leq c \|f\|_{H^1(\Gs)}$ holds.}
We will use the assumptions $f\in H^1(\Gs)$ and $f|_{t=0}=0$. We need a commutation formula for the
material derivative and the Laplace-Beltrami operator. To derive this, we use the notation $\nablaG g=(\uD_1g, \ldots,\uD_d g)^T$ for the components of the tangential derivative and the following identity, given in Lemma 2.6 of \cite{DziukKroener}:
\[
  \dot{(\uD_i g)}=\uD_i \dot{g} - A_{ij}(\bw) \uD_j g, ~\text{with}~~ A_{ij}(\bw)=\uD_i \bw_j - \nu_i \nu_s \uD_j \bw_s, \quad \bn_\Gamma=(\nu_1, \ldots, \nu_d)^T.
\]
 Let $\nablaG  \bw=(\nablaG w_1 \ldots \nablaG w_d ) \in \Bbb{R}^{d \times d}$, $\bA= \nablaG \bw - \bn_\Gamma \bn_\Gamma^T (\nablaG \bw)^T$ and $e_i$ the $i$-th basis vector in $\Bbb{R}^d$. This relation can be written as
$
   \dot{(\uD_i g)}=\uD_i \dot{g} - e_i^T \bA \nablaG g$.
For a vector function $\mathbf{g} =(g_1, \ldots, g_d)^T$ this  yields
$
 \dot{( \div_{\Gamma} \mathbf{g})} =\div_{\Gamma} \dot{\mathbf{g}} - {\rm tr} (\bA \nablaG \mathbf{g}).
$
For a scalar function $g$ the relation yields
$
 \dot{(\nablaG g)}= \nablaG \dot g - \bA \nablaG g.
$
Taking $\bg=\nablaG f$ thus results in the following relation:
\begin{equation} \label{commu2}
 \dot{(\Delta_\Gamma g)}-\Delta_\Gamma \dot g= - \div_\Gamma (\bA \nablaG g)- {\rm tr} (\bA \nablaG^2 g) =:R(\bw,g).
\end{equation}
We take $\phi=\phi_i$ ($1 \leq i \leq N$) in  \eqref{ODE}. Recall that from $f \in H^1(\Gs)$ and smoothness of $\Gs$  it follows that for $b,M,A$ in \eqref{ODE1} we have $b \in H^1(0,T;\Bbb{R}^N)$ and $M,A \in W^{1}_\infty(0,T;\Bbb{R}^{N \times N})$ and thus $\bu_N \in H^2(0,T;\Bbb{R}^N)$. Hence, differentiation w.r.t. $t$ of \eqref{ODE}, with  $\phi=\phi_i$, is allowed and using the Leibnitz formula, $\dot \phi_i=0$ and the commutation relation \eqref{commu2}  we obtain, with $v_N:=\dot u_N$,
\begin{equation}\label{eq_vN}
\begin{split}
 & \int_{\Gamma(t)} (\dot{v}_N -\Delta_{\Gamma} v_N)\phi_i \,d s \\  & =
-\int_{\Gamma(t)} (\dot{u}_N -\Delta_{\Gamma} u_N)\phi_i\div_{\Gamma}\bw \,d s + \int_{\Gamma(t)}( \dot{f}+f\div_{\Gamma}\bw +R(\bw,u_N)) \phi_i\,d s.
\end{split}
\end{equation}
We multiply this equation by $\dot u_i(t)$ and sum over $i$ to get
\begin{align}
 & \frac12 \frac{d}{dt} \int_{\Gamma(t)} v_N^2 \, \,d s +\int_{\Gamma(t)} |\nabla_{\Gamma} v_N|^2 \, \,d s  \label{er} \\
 &  = -\int_{\Gamma(t)} (\dot u_N -\Delta_{\Gamma} u_N)v_N\div_{\Gamma}\bw \,d s + \int_{\Gamma(t)} (\dot{f}+f\div_{\Gamma}\bw+R(\bw,u_N)) v_N\,d s \nonumber \\
 &\quad+ \frac12 \int_{\Gamma(t)} v_N^2\div_{\Gamma}\bw \, \,d s. \nonumber
\end{align}
To treat the first term on the right-hand side, we apply integration by parts and the Cauchy inequality:
\begin{align*}
 & |\int_{\Gamma(t)} (\dot{u}_N -\Delta_{\Gamma} u_N) v_N\div_{\Gamma}\bw \,d s|
\\
&\le c(\|\dot{u}_N\|_{L^2(\Gamma(t))}^2+ \|\nabla_{\Gamma} u_N\|_{L^2(\Gamma(t))}^2)+\frac14\|\nabla_{\Gamma} v_N\|_{L^2(\Gamma(t))}^2.
\end{align*}
For the second term we eliminate the second derivatives of $u_N$ that occur in $R(\bw, u_N)$ using the partial integration identity
$
  \int_\Gamma f \uD_i^2 g \, \,d s = - \int_\Gamma \uD_i f \uD_i g \, \,d s + \int_\Gamma f \uD_i g \kappa \nu_i \, \,d s
$.
Thus we get
\begin{align*}
 & |\int_{\Gamma(t)} (\dot{f}+f\div_{\Gamma}\bw+ R(\bw,u_N)) v_N\,d s| \\
& \le c(\|\dot f\|_{L^2(\Gamma(t))}+ \|f\|_{L^2(\Gamma(t))})\|v_N\|_{L^2(\Gamma(t))} +c \|u_N\|_{H^1(\Gamma(t))} \|v_N\|_{H^1(\Gamma(t))} \\
& \leq c \big( \|\dot f\|_{L^2(\Gamma(t))}^2+ \|f\|_{L^2(\Gamma(t))}^2 + \|u_N\|_{H^1(\Gamma(t))}^2 +\|\dot u_N\|_{L^2(\Gamma(t))}^2 \big)+ \frac14 \|\nabla_{\Gamma} v_N\|_{L^2(\Gamma(t))}^2.
\end{align*}
The two terms $\frac14\|\nabla_{\Gamma} v_N\|_{L^2(\Gamma(t))}^2$ can be absorbed by the term $\|\nabla_{\Gamma} v_N\|_{L^2(\Gamma(t))}^2$ on the left-hand side in \eqref{er}.
Using the estimates \eqref{est_energy2}, \eqref{uniH1} and a Gronwall inequality, we obtain from \eqref{er}
\begin{equation}\label{Aaux1}
\sup_{t\in(0,T)}\int_{\Gamma(t)}{v}_N^2\,d s + \int_{0}^T\int_{\Gamma(t)}|\nabla_\Gamma v_N|^2\,d s\,d t
\le C (\|f\|^2_{H^1(\Gs)}+\|v_N\|^2_{\Gamma_0}).
\end{equation}
Since $\bu_N \in H^2(0,T;\Bbb{R}^N)$, the function $\frac{d \bu_N}{dt}$ is continuous  and from \eqref{ODE1} we get
$\frac{d \bu_N}{dt}(0) = M(0)^{-1} b(0)=0$, 
due to the assumption $f(\cdot,0)=0$ on $\Gamma_0$.
Therefore, $v_N(x,0)= \sum_{j=1}^N \frac{d \bu_j}{dt}(0) \phi_j(x,0)=0$ on $\Gamma_0$.
Using this in \eqref{Aaux1} we get
\begin{equation}\label{Aaux11}
 \sup_{t\in[0,T]}\int_{\Gamma(t)} v_N^2\, dt +\|v_N\|_H^2= \sup_{t\in[0,T]}\int_{\Gamma(t)} \dot u_N^2\, dt + \|\dot u_N\|_H^2 \leq C \|f\|_{H^1(\Gs)}^2
\end{equation}
uniformly in $N$. Hence for a subsequence, again denoted by $(v_N)_{N \in \Bbb{N}}$, we have  $v_N \rightharpoonup  v$ in $H$. This implies, cf. \eqref{elFA}, $v_N \rightharpoonup  v$ in $L^2(\Gs)$. Due to \eqref{conv2} and uniqueness of weak limits we obtain $v=\dot u$, i.e.
\begin{equation} \label{rr}
  v_N \rightharpoonup  \dot u \quad \text{in}~~H
\end{equation}
holds.
 Passing to the limit in \eqref{Aaux11} yields, cf. exercise~7.5.5 in~\cite{Evans},  \[\sup_{t\in[0,T]}\int_{\Gamma(t)} \dot u^2\, dt +\|\dot u\|_H \leq C \|f\|_{H^1(\Gs)},\] which implies
\begin{equation}\label{uest3}
\|\nablaG \dot u\|_0 \le C \|f\|_{H^1(\Gs)}
\end{equation}
and by \eqref{eqAubin} it also implies
\begin{equation}\label{uest3b}
\sup_{t\in[0,T]}\| u\|_{H^2(\Gamma(t))}\le C \|f\|_{H^1(\Gs)}.
\end{equation}
\emph{6. The estimate  $\|\ddot u\|_0 \leq c \|f\|_{H^1(\Gs)}$ holds.}
 First we show $\ddot{u}\in H'$. For arbitrary $\zeta\in C^1(\Gs)$ and $\zeta_N=P_{X_N(t)}\zeta(\cdot,t) \in X_N(t)$, with  $P_{X_N(t)}$ the orthogonal projection defined in Lemma~\ref{l_A2}, using the relation \eqref{eq_vN}  we obtain
\[
\begin{split}
 & \la\ddot{u}_N,\zeta\ra
=\int_0^T\int_{\Gamma(t)}\ddot{u}_N\zeta \, \,d s \,d t=\int_0^T\int_{\Gamma(t)}\ddot{u}_N\zeta_N\,d s \,d t=\int_0^T\int_{\Gamma(t)}\dot{v}_N\zeta_N\,d s \,d t\\
&=\int_0^T\int_{\Gamma(t)}[(\dot{f} + \Delta_{\Gamma} v_N)
-(\dot{u}_N -\Delta_{\Gamma} u_N)\div_{\Gamma}\bw + f\div_{\Gamma}\bw+R(\bw,u_N)] \zeta_N\,d s \,d t.
\end{split}
\]
Applying integration by parts, the Cauchy inequality, Lemma~\ref{l_A2} and the estimates \eqref{est_energy2} and \eqref{Aaux1}, we get
\[
|\la\ddot{u}_N,\zeta\ra | \le c\,\|f\|_{H^1(\Gs)}\left(\int_0^T\|\zeta_N\|_{L^2(\Gamma(t))}^2+\|\nabla_{\Gamma} \zeta_N\|_{L^2(\Gamma(t))}^2\,d t\right)^{\frac12}\le
c\,\|f\|_{H^1(\Gs)}\|\zeta\|_{H}.
\]
Since $C^1(\Gs)$ is  dense in $H$, we get $\ddot{u}_N\in H'$ and $\|\ddot{u}_N\|_{H'}\le c\, \|f\|_{H^1(\Gs)}$, uniformly in $N$. Take $\zeta \in C^1_0(\Gs)$. Recall that $\dot u_N\rightharpoonup  \dot u$ in  $L^2(\Gs)$, cf. \eqref{conv2}. Using this we get
\begin{align*}
\la \ddot{u},\zeta\ra & :=-\int_0^T\int_{\Gamma(t)}\dot{u}\dot{\zeta}+\dot{u}\zeta\div_{\Gamma}\bw \, \,d s \,d t
=-\lim_{N\to\infty}\int_0^T\int_{\Gamma(t)}\dot{u}_N\dot{\zeta}+\dot{u}_N\zeta\div_{\Gamma}\bw \, \,d s \,d t
\\  & =\lim_{N\to\infty}\la \ddot{u}_N,\zeta\ra\le
\sup_N\|\ddot{u}_N\|_{H'}\|\zeta\|_H\le c\|f\|_{H^1(\Gs)}\|\zeta\|_H.
\end{align*}
Therefore,   $\ddot{u}\in H'$ and $\|\ddot{u}\|_{H'}\le c\, \|f\|_{H^1(\Gs)}$ and $\ddot u_N \rightharpoonup \ddot u$ in $H'$. Thus, for $v_N=\dot u_N$, $v=\dot u$ we have, cf. \eqref{rr},
\begin{equation} \label{ppp}
  v_N \rightharpoonup v \quad \text{in}~~H, ~~  \dot v_N \rightharpoonup \dot v \quad \text{in}~~H'.
\end{equation}
We take test function $\psi(x,t)=t^j \phi_k(x,t)$ as in step 3. Using the relation \eqref{eq_vN}, we get for $N \geq k$:
\begin{multline*}
\la \dot v_N,\psi \ra +(\nablaG v_N,\nablaG \psi)_0= (\dot f + R(\bw,u_N),\psi)_0 \\
  - \big[(\dot u_N, \psi \div_{\Gamma}\bw)_0 + (\nablaG u_N,\nablaG (\psi \div_{\Gamma}\bw))_0 -(f,\psi \div_{\Gamma}\bw)\big].
\end{multline*}
For $N \to \infty$, due to $ u_N \rightharpoonup u$ in $H^1(\Gs)$, we can replace $u_N$ by $u$ and since $u$ is the solution of \eqref{weakformuP} the term between square brackets vanishes. Using the weak limit results in \eqref{ppp} and applying a density argument (as in step 3) we thus obtain
\[
\la\dot{v}, \xi\ra+(\nablaG v,\nablaG \xi)_0=(\dot{f}+ R(\bw,u),\xi)_0\quad \text{for all}~\xi\in H.
\]
 From $v_N \rightharpoonup v$ in $W$, boundedness of the trace operator from $W$ to $L^2(\Gamma_0)$ we obtain $v_N(\cdot,0) \rightharpoonup v(\cdot,0)$ in $L^2(\Gamma_0)$. Hence, due to  $v_N|_{\Gamma_0}=0$ we obtain $v|_{\Gamma_0}=0$.
Therefore, for the function $v:=\dot u$, we have $v\in W_0$ is the weak solution of the surface parabolic equation \eqref{weakformuP} with the right hand side $f^\ast=\dot f+R(\bw,u)$ from $L^2(\Gs)$. Hence we can apply the regularity result in \eqref{uest1}
and get $\dot{v}\in L^2(\Gs)$. Thus, $\ddot{u}\in L^2(\Gs)$ and $\|\ddot{u}\|_0\le  C \|f^\ast\|_0 \leq \|\dot f\|_0+ \big(\int_0^T \|u\|_{H^2(\Gamma(t))}^2 \, \,d t\big)^\frac12 \leq  C \|f\|_{H^1(\Gs)}$.
Finally note that from this estimate and the results in \eqref{fg}, \eqref{uest3}, \eqref{uest3b} we obtain the $H^2$-regularity estimate in \eqref{H2reg}.

\section{Conclusions and outlook} \label{sectconcl} We analyzed an Eulerian method based on
traces on the space-time manifold of standard bilinear space-time finite elements. A stability result is derived in which there are no restrictions on the size of $\Delta t$ and $h$. This indicates that the method has favourable robustness properties.
We proved first and  second order discretization error bounds for this method. To the best of our knowledge, this is the first  Eulerian finite element method
which is proved to be second order accurate for PDEs on  evolving surfaces.
In the applications that we consider, we restrict to first order finite elements, due to the fact that the approximation of the evolving surface causes an error (``geometric error'') of size $\mathcal{O}(h^2)$, which is consistent with the interpolation error for P1 elements. Results of numerical experiments, which illustrate the second order convergence and excellent stability properties of the method,  are presented in \cite{refJoerg,ORXsinum,GOReccomas}. These  experiments clearly indicate
that second order convergence holds in $L^2(\Gs)$ norm, which is  stronger than the $H^{-1}(\Gs)$ norm  used in our analysis. The experiments also show  that the stabilization term ($\sigma >0$ in \eqref{stabblf})
improves the discrete mass conservation of the method, but is not  essential for stability or overall accuracy. Essential for our analysis is the condition~\eqref{ass7}, which allows a  strong convection of $\Gamma(t)$ but only small local area changes. Numerical experiments indicate that the latter is not  critical for the performance of the method.

There are several topics that we consider to be of interest for further research. Maybe an error analysis that needs weaker assumptions (than \eqref{ass7}) and/or avoids the stabilization can be developed. A second interesting topic is the derivation  of anisotropic interpolation error estimates which may then lead to a second order error bound in the $L^2(\Gs)$ norm.
A further open problem is the derivation of rigorous error estimates for the case when the smooth space-time manifold $\Gs$ is approximated, e.g., by a piecewise tetrahedral  surface.
\bibliographystyle{siam}
\bibliography{literatur}

\end{document}